\documentclass[mathpazo]{cicp}

\usepackage{amsmath,amsfonts,amsthm,amssymb}
\usepackage{algorithm2e}
\usepackage{bm,dsfont}
\usepackage{booktabs}
\usepackage{graphicx}
\usepackage{array}
\usepackage{xcolor}
\usepackage{sidecap}
\usepackage{listings}
\usepackage{times}
\usepackage{url}
\usepackage{hyperref}

\usepackage[utf8]{inputenc}
\usepackage[margin=1in]{geometry}
\usepackage[titletoc,title]{appendix}
\usepackage{amsmath,amsfonts,amssymb,mathtools}
\usepackage{graphicx,float}
\usepackage{times}
\usepackage{mlmath}
\usepackage{color}
\usepackage{amsthm} 
\usepackage{hyperref}
\usepackage{subfigure}
\usepackage{comment}
\usepackage{graphicx}     % 插入图片
\usepackage{subcaption}   % 支持子图布局 (推荐替代 subfigure)
\usepackage[margin=1in]{geometry}
\PassOptionsToPackage{numbers, compress}{natbib}
\usepackage{natbib}
% \addbibresource{reference.bib}
\newtheorem{prop}{Proposition}
\newtheorem{assump}{Assumption}

\ifpdf
  \DeclareGraphicsExtensions{.eps,.pdf,.png,.jpg}
\else
  \DeclareGraphicsExtensions{.eps}
\fi
\ifpdf
\hypersetup{
	pdftitle={SSBE},
	pdfauthor={Zhou, Q. and Chen, C. and Luo, T. and Xiang, Y.}
}
\fi

\begin{document}
	
	\title{SSBE-PINN: A Sobolev Boundary Scheme Boosting Stability and Accuracy in Elliptic/Parabolic PDE Learning}

%Understanding and Correcting Instability in PINNs via Sobolev-Regularized Boundary Loss

\author[Zhou Q.X. et.~al.]{
    Qixuan Zhou\affil{1}, 
    Chuqi Chen\affil{2,5,}\footnotemark[1], 
    Tao Luo\affil{1,3}, 
    and Yang Xiang\affil{2,4,}\footnotemark[1]
}
\address{
    \affilnum{1}\ School of Mathematical Sciences, Shanghai Jiao Tong University, Shanghai, China \\
    \affilnum{2}\ Department of Mathematics, Hong Kong University of Science and Technology,
    Clear Water Bay, Hong Kong SAR, China\\
    \affilnum{3}Institute of Natural Sciences, MOE-LSC,
    Shanghai Jiao Tong University, CMA-Shanghai\\
    \affilnum{4}\ Algorithms of Machine Learning and Autonomous Driving Research Lab,
    HKUST Shenzhen-Hong Kong Collaborative Innovation Research Institute,
    Futian, Shenzhen, China \\
    \affilnum{5}\ Department of Mathematics, University of Michigan,
    Ann Arbor, MI, USA\\
}
\emails{
    {\tt zhouqixuan@sjtu.edu.cn }, 
    {\tt cchenck@umich.edu },
    {\tt maxiang@ust.hk}
    {\tt luotao41@sjtu.edu.cn},
}

\footnotetext[1]{Corresponding author {\tt cchenck@umich.edu}, {\tt maxiang@ust.hk}}

\begin{abstract}
		Physics-Informed Neural Networks (PINNs) have emerged as a powerful framework for solving partial differential equations (PDEs), yet they often fail to achieve accurate convergence in the 
$H^1$ norm, especially in the presence of boundary approximation errors. In this work, we propose a novel method called \textit{Sobolev-Stable Boundary Enforcement} (SSBE), which redefines the boundary loss using Sobolev norms to incorporate boundary regularity directly into the training process. We provide rigorous theoretical analysis demonstrating that SSBE ensures bounded $H^1$ error via a stability guarantee and derive generalization bounds that characterize its robustness under finite-sample regimes. Extensive numerical experiments on linear and nonlinear PDEs—including Poisson, heat, and elliptic problems—show that SSBE consistently outperforms standard PINNs in terms of both relative $L^2$ and $H^1$ errors, even in high-dimensional settings. The proposed approach offers a principled and practical solution for improving gradient fidelity and overall solution accuracy in neural network-based PDE solvers.
	\end{abstract}

	\ams{65M12, 41A46, 35J25, 35K20}
	
	\keywords{Physics Informed Neural Networks, Convergence, Stability, Elliptic and Parabolic PDEs, Sobolev-stable Boundary Enforcement}
	
	\maketitle	
\section{Introduction}
Partial Differential Equations (PDEs) are fundamental in the mathematical modeling of diverse physical phenomena, including heat conduction, fluid dynamics, electromagnetism, and quantum mechanics. Despite substantial advancements in numerical techniques over recent decades, accurately and efficiently solving PDEs remains challenging, particularly for problems involving complex geometries or high-dimensional spaces. Traditional numerical approaches, such as the Finite Element Method (FEM) and Finite Difference Method (FDM), have been extensively utilized due to their robustness, clear theoretical foundations, and well-established convergence properties~\cite{Zienkiewic2013TheFinite}. These methods discretize the computational domain using structured grids or meshes, enabling reliable numerical approximations. However, they can encounter difficulties when dealing with high-dimensional problems due to the exponential growth in computational complexity (the curse of dimensionality) and may require sophisticated mesh refinement strategies to accurately resolve solutions near singularities or steep gradients~\cite{Strang2008Ananalysis}.

In recent years, there has been increasing interest in leveraging Machine Learning (ML) techniques for solving PDEs, motivated by the capability of deep learning models to approximate complex, high-dimensional solutions without explicit domain discretization. Several neural network-based schemes have emerged, including Physics-Informed Neural Networks (PINNs)~\cite{Raissi2019Physics-informed}, Deep Ritz Method (DRM)~\cite{DeepRitz}, Weak Adversarial Neural Networks (WANs)~\cite{wan}, along with their various extensions. Among these, PINNs have gained prominence due to their conceptual simplicity, adaptability, and mesh-free formulation. These approaches utilize neural networks to approximate PDE solutions directly, circumventing the need for predefined computational grids. ML-based PDE solvers have shown considerable potential in handling both forward and inverse problems~\cite{Raissi2019Physics-informed, Sirignano2018DGM}, particularly in high-dimensional scenarios. However, existing methods typically ensure convergence primarily in the $L^2$ norm, without guaranteeing accurate convergence of solution gradients. This limitation may result in inaccurate solutions when gradient information is critical, such as velocity fields in fluid dynamics or temperature gradients in heat transfer problems~\cite{GU2021110444,krishnapriyan2021characterizing}.

In the numerical analysis of partial differential equations (PDEs), while $L^2$ convergence ensures that the solution is accurate on average, it provides an incomplete measure of fidelity as it solely considers the solution's pointwise values. A more stringent and physically meaningful criterion is convergence in the Sobolev space $H^1$, which additionally incorporates the error in the solution's gradients~\cite{brenner2008mathematical,suli2003introduction}. This holistic assessment is crucial for physical systems where first-order derivatives are not merely mathematical abstractions but correspond to vital physical quantities such as velocity, flux, stress, and strain. For instance, in solid mechanics, the $H^1$ seminorm is directly proportional to the elastic strain energy, meaning that minimizing the error in this norm is equivalent to finding a solution that is physically more realistic in terms of energy. Furthermore, ensuring $H^1$ convergence yields significant numerical benefits. It leads to more robust error estimates and often superior convergence rates, particularly for problems with complex boundary conditions or in regions where solution gradients vary sharply~\cite{ciarlet2002finite}. By penalizing large, oscillatory gradient errors, the $H^1$ norm inherently promotes smoother numerical approximations. This property helps to mitigate numerical instabilities and spurious artifacts, such as Runge's phenomenon, which can arise from high-order polynomial approximations, thereby ensuring that the numerical method is not only consistent but also stable~\cite{arnold2002differential}.
These considerations are especially pertinent to modern deep learning-based PDE solvers like Physics-Informed Neural Networks (PINNs). In such methods, the PDE residual is minimized via optimization, and using a loss function based on the $L^2$ norm can lead to ill-conditioned training landscapes, especially for multiscale problems~\cite{LI2021100429, Wang2021Understanding}. Adopting an $H^1$-based loss, however, regularizes the optimization process by enforcing constraints on the gradients, leading to more stable training and more physically accurate predictions. Consequently, achieving $H^1$ accuracy is not just a matter of mathematical rigor; it is fundamental to obtaining numerically stable, physically reliable, and robust solutions for a wide range of scientific and engineering problems.

In this paper, our analysis and investigation primarily focus on improving the $H^1$ convergence of neural network-based PDE solvers, particularly PINNs. We first introduce the problem setting of our paper: 
\paragraph{Problem Settings} 

Let $\Omega$ be an open and bounded domain in $\sR^d$. We consider the partial differential equation (PDE) problem of the following general form:
\begin{equation}\label{eq::setup}
\left\{
\begin{aligned}
    \fL u &= f, \quad &&\text{$x \in \Omega$}, \\
    \fB u &= g, \quad &&\text{$x \in \partial\Omega$}.
\end{aligned}
\right.
\end{equation}
where $\fL$ is the differential operator and $\fB$ denotes the boundary operator, which may represent Dirichlet, Neumann, periodic conditions, etc. Throughout this paper, we assume that the PDE defined by Eq.~\eqref{eq::setup} admits a unique analytical solution $u(x)$, with $x=(x_1,\ldots,x_d)$, satisfying Eq.~\eqref{eq::setup} almost everywhere within $\Omega$ and on its boundary $\partial\Omega$. For problems involving time-dependence, the time variable $t$ can be incorporated as an additional dimension within $x$, allowing $\Omega$ to represent a spatio-temporal domain.

Our primary objective is to \textbf{approximate the solution of Eq.~\eqref{eq::setup} up to first-order accuracy} using data randomly sampled from both the domain and the boundary. Specifically, we denote these sample sets by $S_{\Omega}:=\{x_{\Omega}^i\}_{i=1}^{n_{\Omega}}$ and $S_{\partial\Omega}:=\{x_{\partial\Omega}^{j}\}_{j=1}^{n_{\partial\Omega}}$. In this paper, we focus on Physics-Informed Neural Network (PINN), which approximates the PDE solution using a neural network. The PDE problem can typically be reformulated as the following nonlinear least-squares optimization problem, aiming to find parameters $\theta$ for the neural network $u(x;\theta)$ (often a multi-layer perceptron, MLP):
\begin{equation}\label{eq::PINN}
    \fR_{S}^{\operatorname{PINN}}:=\underbrace{\frac{1}{n_{\Omega}}\sum_{i=1}^{n_{\Omega}}\Abs{\fL u(x^i_{\Omega})-f(x^i_{\Omega})}^2}_{L_{\text{res}}}+\underbrace{\frac{\lambda}{n_{\partial\Omega}}\sum_{j=1}^{n_{\partial\Omega}}\Abs{\fB u(x^j_{\partial\Omega})-g(x^j_{\partial\Omega})}^2}_{ L_{\text{bc}}}.
\end{equation}

Here $L_{\text{res}}$ is the PDE residual loss, $ L_{\text{bc}}$ is the boundary loss and $\lambda$ is a constant used to balance these two terms. In cases of exact boundary conditions—where minimizers of Eq.~\eqref{eq::PINN} coincide precisely with the true boundary values—previous work has demonstrated that $H^1$ approximation properties can be successfully obtained. However, as we illustrate in Section~\ref{sec::CounterExample}, even small perturbations or inaccuracies in the boundary loss can lead to significant deviations from the ground truth solution.  Specifically, for the population loss of PINN $\fR_{\fS}^{\operatorname{PINN}}$ defined in Eq.~\eqref{eq::PINN}, for given operators $\fL$ and $\fB$, there is $\{u_i\}_{i=1}^{\infty}$ such that

\begin{equation}\label{eq::FailureH1}
    \begin{aligned}
        &\lim_{i\to \infty}\frac{1}{\Abs{\Omega}}\norm{\fL u_i-f}_{L^2(\Omega)}^2+\frac{1}{\Abs{\partial\Omega}}\norm{\fB u_i-g}_{L^2(\partial\Omega)}^2\to 0,\\
        &\text{yet,} \quad 
        \frac{\norm{u_i-u}_{H^1(\Omega)}}{\norm{u}_{H^1(\Omega)}}=O(1)\quad \text{for all $i\in \{1,\ldots,\infty\}$}.
    \end{aligned}
\end{equation}

This result highlights a fundamental limitation of PINNs: even minor errors in boundary approximations can yield $O(1)$ relative errors between the neural network approximation and the exact solution in the $H^1$ norm. Consequently, our work focuses on developing a novel approach that explicitly ensures neural networks achieve robust and accurate approximations in the $H^1$ sense. We propose the following method to achieve this goal.

\textbf{Sobolev-Stable Boundary Enforcement (SSBE)}: By introducing a Sobolev space restricted to boundary (see Section~\ref{sec::BoundarySobolev}), we propose a novel algorithm named \textbf{SSBE} that redefines the population loss with objective function $v$ as
\begin{equation}\label{eq::H1convergenceAlgorithm}
    \fR_{D}(v)=\frac{1}{\Abs{\Omega}}\norm{\fL v-f}_{L^2(\Omega)}^2+\frac{\lambda}{\Abs{\partial\Omega}}\norm{\fB v-g}_{H^1(\partial\Omega)}^2,
\end{equation}
where $\lambda>0$ is an adjustable weight parameter. This formulation retains the general structure of physics-informed neural network (PINN) losses while augmenting the boundary term with an $H^1(\partial\Omega)$ penalty. Control of the $H^1$ norm of the boundary discrepancy on $\partial\Omega$ ensures that the objective function $v$ converges to the ground-truth solution in the $H^1$ sense, thereby guaranteeing convergence of both the solution and its first-order derivatives.

In the proposed \textbf{SSBE} framework, we circumvent this ambiguity by performing a local dimension reduction to the boundary. For instance, in two dimensions, we locally parameterize the boundary so that one coordinate is expressed as a function of the other (e.g., $y=y(x)$ along $\partial\Omega$), and then compute derivatives tangentially with respect to the intrinsic boundary coordinate. The $H^1(\partial\Omega)$ norm of the boundary discrepancy is then evaluated in this reduced coordinate system, ensuring a well-defined and consistent treatment of boundary derivatives without requiring an ambient extension of $g$. This flattening procedure locally transforms the boundary into a lower-dimensional coordinate system, thereby simplifying its geometry and enabling a consistent computation of tangential derivatives without requiring any ambient extension of the boundary data. Within this intrinsic coordinate framework, we evaluate the $H^1(\partial\Omega)$ norm of the discrepancy between the objective function and the prescribed boundary data. This construction ensures that the learned solution is close to the ground-truth solution in the $L^2$ sense while also achieving $H^1$-norm convergence, thereby controlling both the solution itself and its first-order derivatives. Such accuracy in derivative information is particularly advantageous in applications such as fluid dynamics, heat conduction, and other PDE models in which gradients carry essential physical meaning.

The necessity of this intrinsic-boundary approach becomes apparent when noting that, in many PDE problems, the boundary function $g$ is prescribed only as trace data on $\partial\Omega$ and is not defined in the ambient domain $\Omega$. Consequently, the strategy of directly computing the ambient-space derivatives of $g$ and comparing them with those of the learned solution $v$ through the penalty 
$
\| D_{\alpha} g - D_{\alpha} v \|_{L^{2}(\partial\Omega)}
$
is generally not well-posed. Such an approach implicitly requires a smooth extension of $g$ from $\partial\Omega$ to a neighborhood in $\mathbb{R}^d$, yet these extensions are typically non-unique: distinct extensions that coincide pointwise on $\partial\Omega$ may yield different derivatives on the boundary. The proposed \textbf{SSBE} effectively avoids this issue by ensuring that the boundary function $g$ admits a unique and consistent representation through a segmentation and flattening procedure. This approach simplifies the computation of boundary gradients, eliminating ambiguities and ensuring accurate evaluations. Moreover, the SSBE method does not rely on additional external information beyond the PDE's original definition; instead, the boundary flattening and associated gradient computations are fully determined by the intrinsic geometry and structure of the PDE problem itself. 

From a theoretical perspective, the SSBE framework requires the boundary data $g$ to belong to $H^{1/2}(\partial\Omega)$, ensuring, by standard PDE regularity results, that the corresponding forcing term $f$ lies in $L^{2}(\Omega)$. In practice, for numerical stability and ease of implementation, we often assume $g \in H^{1}(\partial\Omega)$, which corresponds to $f \in H^{1/2}(\Omega)$. This regularity assumption is standard in PDE analysis~\cite{McLean2000StronglyES,kufner1977function,gilbarg1977elliptic} and is not restrictive for the range of applications considered in this work.

Several existing studies have also investigated alternative formulations of boundary conditions in PINNs beyond the standard $L^2$ norm~\cite{shin2020convergence,bonito2024convergence,bachmayr2024variationally}. In~\cite{jiao2024stabilized}, the analysis is conducted in the context of the wave equation, where achieving $H^1$ convergence requires imposing an $H^1$-norm convergence condition in time $t$ on the boundary. However, no additional constraints on spatial derivatives along the boundary are explicitly considered.  It is worth mentioning that a similar idea called
Sobolev training has been proposed to improve the efficiency for regression ~\cite{czarnecki2017sobolev}. Later in ~\cite{son2021sobolev} and ~\cite{vlassis2021sobolev}, the authors generalized this idea to the training of
PINNs, with applications to heat equation, Burgers’ equation, Fokker-Planck
equation and elasto-plasticity models. One main difference between our proposed method
and these works is that, we still use the $L^2$ norm, rather than $H^1$ norm for the
residual and initial condition in the loss of SSBE. This designing, as we will demonstrate, turns out to be a sufficient condition to guarantee the stability. These findings provide evidence for the effectiveness of the proposed method and its efficiency from a computational standpoint. In this work, we focus on the convergence of the $H^1$ norm and provide a comprehensive analysis for both elliptic and parabolic types of partial differential equations. To address the limitations of existing methods in capturing $H^1$ convergence, we propose the SSBE loss, which is specifically designed to overcome this issue. The proposed SSBE loss is computationally convenient, and empirical results demonstrate that it effectively improves both the $L^2$ and $H^1$ accuracy of the solution. 

\begin{color}{black}
    Beyond stability, a growing literature provides quantitative analyses of neural network PDE solvers under finite sampling, by relating population objectives to their empirical counterparts. 
For variational formulations, such as the deep Ritz method and high-dimensional Schr\"odinger eigenvalue problems, 
\cite{lu2021apriorigeneralization,lu2022apriorigeneralization} derived dimension-independent \emph{a priori} generalization bounds by combining Barron-type approximation theory with Rademacher complexity, thereby controlling the population--empirical gap of the energy functional. 
For residual-based approaches (including PINN-type formulations), there are also theoretical studies on convergence and error decompositions under discrete interior/boundary sampling and numerical quadrature, e.g., \cite{shin2020convergence, mishra2022estimates, luo2024twolayer}. In the present work, we adapt this finite-sample viewpoint to the proposed SSBE loss and close the continuous-to-discrete error pathway tailored to Sobolev-stable boundary enforcement. 
Starting from a population-level quantitative $H^1$-stability inequality for SSBE, we bound the discrepancy between the population SSBE loss and its empirical counterpart induced by finite interior and boundary samples, and translate it into explicit $H^1$-error estimates. 
This yields a coherent picture in which the same SSBE loss simultaneously restores continuous $H^1$-stability and admits favorable sample-based generalization guarantees, consistent with our numerical observations.
\end{color}

In this work, we target $H^1$-convergence and provide rigorous analyses for both elliptic and parabolic PDEs. To overcome the limitations of existing objectives in controlling $H^1$ errors, we propose the \textbf{SSBE} loss—a theory-grounded objective explicitly tailored to $H^1$ accuracy. In addition to the standard $L^2$ terms for the PDE residual and initial condition, the SSBE loss incorporates an $H^1(\partial\Omega)$ boundary term, which simultaneously constrains the boundary values and their tangential derivatives. A key ingredient is a boundary-aligned re-coordinate (reparameterization) motivated by PDE trace theory, which sharpens the handling of boundary terms and reduces boundary residuals. This design ensures that the learned solution converges to the ground truth in the $H^1$ norm while maintaining computational efficiency. Empirically, SSBE yields substantial improvements in both $L^2$ and $H^1$ accuracy. Our approach is complementary to advances in sampling~\cite{luo2025imbalanced,wight2020solving,wu2023comprehensive}, architecture design~\cite{liu2020multi,huang2025frequency,chen2024quantifying}, and training strategies~\cite{zheng2024hompinns,chen2025learn,krishnapriyan2021characterizing}, and can be seamlessly integrated with these techniques to further enhance the solution quality of PINNs.

To summarize, the advantages of incorporating our proposed $H^1$-based \textbf{SSBE} method can be outlined as follows:

\begin{color}{black}
\begin{itemize}
    \item \textbf{$H^1$-stable SSBE-PINN formulation.}
    We propose a Sobolev-Stable Boundary Enforcement (SSBE) framework in Section~\ref{sec::Experiments} and in Section~\ref{sec::Stability}, establish a quantitative $H^1$-stability estimate
    \begin{equation*}
        \norm{u-v}_{H^1(\Omega)}^2\le C\fR_{D}(v),
    \end{equation*}
    thereby rigorously linking the population SSBE loss to the approximation error in the natural energy norm.

    \item \textbf{Practical Sobolev boundary enforcement via local parameterization.}
    We design a practical implementation of the SSBE loss based on local parameterizations of the boundary into coordinate charts, on which boundary traces and tangential derivatives are evaluated and assembled.
    This construction makes the theoretically motivated Sobolev boundary scheme amenable to efficient numerical realization in PINN training.
    \item \textbf{From continuous $H^1$-stability to sample-based generalization bounds.}
    Building on the $H^1$-stability, Section~\ref{sec::GeneralizationBound} derives \emph{apriori} generalization bounds for SSBE (note as $\text{GenErr}(\fR_D(\cdot),\fR_S(\cdot))$) by combining Rademacher complexity estimates with Barron-type approximation theory.
    We quantify the gap between population and empirical SSBE losses under finite interior and boundary sampling and translate it into explicit $H^1$-error bounds, i.e. 
    \begin{equation*}
        \norm{u-u_{\theta}}_{H^1(\Omega)}\le C(\fR_S(u_{\theta})+\text{GenErr}(\fR_D(u_{\theta}),\fR_S(u_{\theta}))).
    \end{equation*}
    
    \item \textbf{Numerical validation of stability and generalization.}
    We present a series of numerical experiments in Section~\ref{sec:example} on representative elliptic and parabolic PDEs. The results systematically indicate that SSBE achieve improved approximation accuracy in both the $L^2$ and $H^1$ norms compared with conventional PINN formulations, thereby providing empirical support for the theoretical stability and generalization properties established in this work.
\end{itemize}
\end{color}

The paper is organized as follows. In Section~\ref{sec:setup}, we provide a detailed description of several preliminary concepts necessary for our discussion. In Section~\ref{sec::CounterExample}, we construct an illustrative example demonstrating that conventional PINNs can result in inaccurate convergence in the $H^1$ norm. Building upon this, we propose our SSBE algorithm in Section~\ref{sec::Experiments}. Section~\ref{sec::Stability} presents a theoretical analysis of the $H^1$ stability and convergence of our method, while Section~\ref{sec::GeneralizationBound} provides an analysis of the generalization error. Finally, Section~\ref{sec:example}  presents numerical experiments that demonstrate the effectiveness of our proposed approach.

\begin{figure}[htpb]
    \centering
    \includegraphics[scale=0.45]{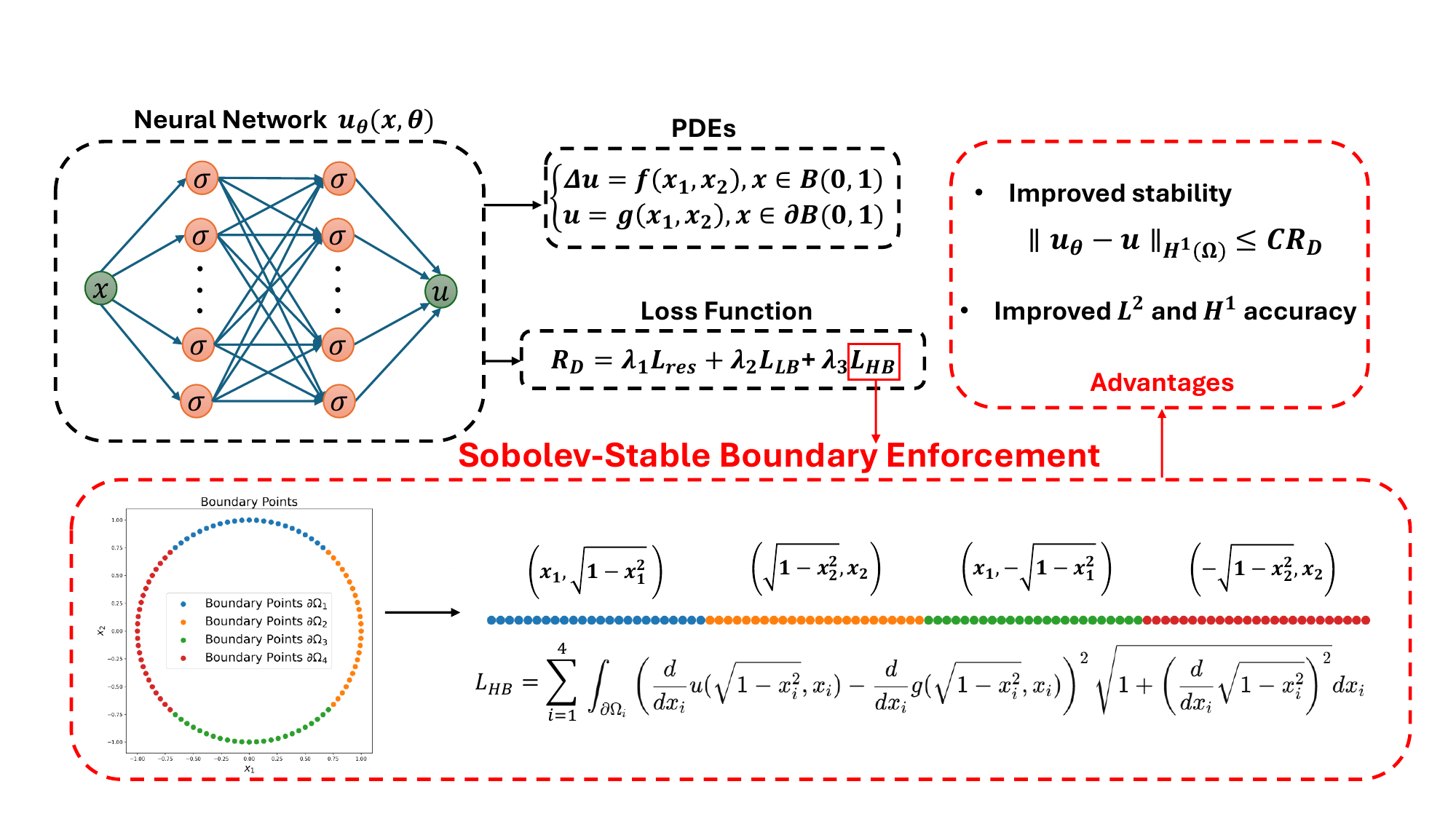}
    \label{Fig.Sketch}
    \caption{Framework of sobolev-stable boundary enforcement.}
\end{figure}
\section{Mathematical Setup and Preliminaries} \label{sec:setup}
In this section, before analyzing our proposed \textbf{SSBE} method, we first present several preliminaries necessary for our discussion. For completeness, we begin by introducing Sobolev spaces restricted to the boundary. Then, we define the PDE problems (both elliptic and parabolic) and state the assumptions employed in our analysis. Lastly, we introduce the notation related to neural networks and function spaces used throughout this paper.

\subsection{Sobolev Space Restricted on Boundary}\label{sec::BoundarySobolev}

To introduce our proposed \textbf{SSBE} method, we begin by recalling several concepts regarding Sobolev spaces restricted to the boundary.

\begin{definition}[$\fC^{k,\lambda}$ domain]\label{def::spcialdomain}
Let $k\in \sN_0$, $\lambda\in (0,1]$. A bounded domain $\Omega\subseteq \sR^{d}$ is said to belong to the class $\fC^{k,\lambda}$ if and only if there exist
\begin{enumerate}
    \item $l$ Cartesian coordinate $\fX_{r}$ ($l\in \sN$, $r\in \{1,\ldots, l\}$)
    \begin{equation*}
        \fX_{r}=(x_{r_1},\ldots,x_{r_{d-1}},x_{r_d})=:(x'_{r},x_{r_d}),
    \end{equation*}
    with $x'_{r}:=(x_{r_1},\ldots,x_{r_{d-1}})$. And suppose a fixed Cartesian coordinate $\fX$ is given in $\sR^d$, then $\fX_{r}$ is obtained by $\fX_{r}=\fO_{r}\fX+\fB_{r}=:\fF_{r}\fX$, where $\fO_{r}$ is an orthogonal matrix with determinant 1, and $\fB_{r}\in \sR^d$ is a vector. $\fF_{r}$ is used to denote the coordinate transformation.
    \item a number $\alpha>0$ and $m$ functions $\gamma_r$
    \begin{equation*}
        \gamma_r\in \fC^{k,\lambda}(\overline{B_{r}^{\kappa}}),
    \end{equation*}
    where $B_{r}^{\kappa}:=\{x'_{r}:\abs{x'_{r}}<\kappa\}\subseteq \sR^{d-1}$.
    \item a number $\mu>0$ and bounded open sets $U_{r}:=\Gamma_r\bigcup U_{r}^{+}\bigcup U_{r}^{-}$such that
    \begin{enumerate}
        \item the sets
        \begin{equation*}
            \Gamma_{r}=\fF_{r}^{-1}\{(x'_{r},x_{r_d}):x'_{r}\in B_{r}^{\kappa}, x_{r_d}=\gamma_r(x'_{r})\},
        \end{equation*}
        are subsets of $\partial\Omega$ for $r\in\{1,\ldots,l\}$ and $\partial\Omega=\bigcup_{r=1}^{m}\Gamma_r$.
        \item for $r\in \{1,\ldots,l \}$ the set
        \begin{equation*}
            U_{r}^{+}=\fF_{r}^{-1}\{(x'_{r},x_{r_d}):x'_{r}\in B_{r}^{\kappa}, \gamma_r(x'_{r})< x_{r_d}< \gamma_r(x'_{r})+\mu\},
        \end{equation*}
        is a subset of $\Omega$.
        \item for $r\in \{1,\ldots,l \}$ the set
        \begin{equation*}
            U_{r}^{-}=\fF_{r}^{-1}\{(x'_{r},x_{r_d}):x'_{r}\in B_{r}^{\kappa}, \gamma_r(x'_{r})-\mu< x_{r_d}< \gamma_r(x'_{r})\},
        \end{equation*}
        is a subset of $\sR^d-\overline{\Omega}$. 
    \end{enumerate}
\end{enumerate}
\end{definition}
Unless otherwise stated, the domain $\Omega$ is supposed to be of class $\fC^{1,0}$ throughout the paper. And for each Cartesian coordinate $\fX_{r}$, we can define a corresponding ``straighten'' coordinate $\fY_r:=\{y_{r_1},\ldots,y_{r_{d-1}},y_{r_d}\}$ by 
\begin{equation}
\left\{
\begin{aligned}
    y_{r_{\alpha}} &= x_{r_{\alpha}} := \Phi_{r_{\alpha}}(\fX_{r}),
    &&\quad \text{for $\alpha \in \{1,\ldots,d-1\}$}, \\
    y_{r_d} &= x_{r_d} - \gamma_r(x'_{r}) := \Phi_{r_d}(\fX_{r}).
\end{aligned}
\right.
\end{equation}
and define $\fY_r=\Phi_r(\fX_{r})$. Similarly, its inverse can be defined as $\fX_{r}=\Psi(\fY_r)$ with $\Psi=\Phi^{-1}$ and $\operatorname{det}\Phi=\operatorname{det}\Psi.$

Note that for a function $w$ defined on $\Gamma_r$, corresponding $_{r}w(x'_{r})$ is defined as 
\begin{equation*}
    _{r}w(x'_{r})=w\circ \fF_{r}^{-1}(x'_{r},\gamma_r(x'_{r})).
\end{equation*}

\begin{definition}[$L^2$ integrable on boundary]\label{def::LpIntegrationOnBoundary}
    A function $w$ defined almost everywhere on $\partial \Omega$ is said to belong to the space $L^{2}_{r}(\partial\Omega)$ if each function $_{r}u \in L^2(B_{r}^{\kappa})$ 
    and satisfies
    \begin{equation*}
        \int_{B_{r}^{\kappa}}\Abs{_{r}w}^2 \diff{x'_{r}}<\infty,\quad \text{for $r\in\{1,\ldots,l\}$}.
    \end{equation*}
    And the class $L^2(\partial \Omega)$ is equipped with norm
    \begin{equation*}
        \norm{w}_{L^2(\partial\Omega)}=\left(\sum_{r=1}^{l}\int_{B_{r}^{\kappa}}\Abs{_{r}w}^2 \diff{x'_{r}}\right)^{1/2}=:\left(\sum_{r=1}^{l}\norm{{_{r}w}}^{2}_{L_{r}^{2}(B_{r}^{\kappa})}\right)^{1/2}.
    \end{equation*}
\end{definition}
\begin{remark}
    Note that the subscript $r$ in $L^{2}_{r}$ and $H^1_{r}$ means $L^2$ and $H^1$ integrable under Cartesian coordinate $\fX_{r}$.
\end{remark}

\begin{remark}
    One may doubt that the definition of $L^2(\partial\Omega)$ depends on the description of $\partial \Omega$. However, it can be proved that if another $\tilde{L}^2(\partial\Omega)$ is obtained by another description of $\partial\Omega$, then $\norm{\cdot}_{L^2((\partial\Omega))}$ and $\norm{\cdot}_{\tilde{L}^2(\partial\Omega)} $ are equivalent
\end{remark}

\begin{definition}[$H^1$ integrable on boundary]\label{def::WkpIntegrationOnBoundary}
    Let $\Omega\in \fC^{0,1}$. Denote by $H^1(\partial\Omega)$ the subspace of all functions $w\in L^2(\partial\Omega)$ such that $_{r}w\in H^1(B_{r}^{\kappa})$ for $r\in\{1,\ldots,l\}$ and equipped with the norm
    \begin{equation*}
        \begin{aligned}
            \norm{w}_{H^1(\partial\Omega)}&=\left(\sum_{r=1}^{l}\left(\norm{{_{r}w}}^{2}_{L_{r}^{2}(B_{r}^{\kappa})}+\sum_{\alpha=1}^{d-1}\int_{B_{r}^{\kappa}}\abs{D_{x_{r_{\alpha}}} {_{r}w}}^2  \diff{x'_{r}}\right)\right)^{1/2}\\
            &=:\left(\sum_{r=1}^{l}\norm{{_{r}w}}^{2}_{H_{r}^{1}(B_{r}^{\kappa})}\right)^{1/2}.
        \end{aligned}
    \end{equation*}
\end{definition}

\subsection{PDEs and Assumptions}\label{sec::Setup}

 In this paper, we primarily focus on analyzing the stability properties of our proposed \textbf{SSBE} method for elliptic and parabolic problems. Next, we present the assumptions associated with the PDE problems under consideration.

Let a bounded domain $\Omega\subset \sR^d$ belong the class $\fC^{1,0}$. Application of \textbf{SSBE}~\eqref{eq::H1convergenceAlgorithm} and analysis of stability and a priori generalization bound are exhibited on the following PDE problems
\begin{enumerate}
    \item Elliptic problem
        \begin{equation}\label{eq::EllipticProblem}
\left\{
\begin{aligned}
    \fL u :=\,& -\operatorname{div}(A(x)Du) + \sum_{\alpha=1}^{d} b_{\alpha}(x) D_{\alpha}u + c(x)u = f(x),
    &&\quad \text{for $x \in \Omega$}, \\
    u =\,& g(x),
    &&\quad \text{for $x \in \partial\Omega$}.
\end{aligned}
\right.
\end{equation}

    \item Parabolic problem
        \begin{equation}\label{eq::ParabolicProblem}
\left\{
\begin{aligned}
    u_t + \fL_t u &= f(x_t), 
    &&\quad \text{for $x_t \in \Omega_T$}, \\
    u(x_t) &= g(x_t), 
    &&\quad \text{for $x_t \in \partial\Omega_T$}, \\
    u(x,0) &= \nu(x), 
    &&\quad \text{for $x \in \Omega$}.
\end{aligned}
\right.
\end{equation}
where $\Omega_{T}=\Omega\times[0,T]$, $\partial\Omega_{T}:=\partial\Omega\times [0,T]$, $x_t:=(x,t)\in\sR^{d+1}$ and 
        \begin{equation*}
            \fL_tu:=-\operatorname{div}(A(x_t)D_{\alpha\beta}u)+\sum_{\alpha}^{d}b_{\alpha}(x_t)D_{\alpha}u+c(x_t)u.
        \end{equation*}
\end{enumerate}
The following assumptions are required to guarantee the existence of solution so that the mathematical analysis can be performed.
\begin{assump}[Assumption for elliptic problem]\label{assum::Elliptic}
    Let $A(x)=[a_{\alpha\beta}(x)]^{d\times d}\in [W^{1,\infty}(\Omega)]^{d\times d}$ satisfies that there is $\lambda>0$ such that for all $\mu\in \sR^d$ and $x\in\Omega$
    \begin{equation}
        \lambda\Abs{\mu}^2\le \mu^{\T}A(x)\mu\le \frac{1}{\lambda}\Abs{\mu}^2,
    \end{equation} 
    $\vb(x):=\{b_{\alpha}(x)\}_{\alpha\in\{1,\ldots,d\}}^{\T}\in [L^{\infty}(\Omega)]^{d}$ and $c(x)\in L^{\infty}(\Omega)$ satisfying $\operatorname*{ess\,inf}_{x}  c(x)>\frac{2d\norm{\vb(x)}_{L^{\infty}(\Omega)}+2}{\lambda}$.   And $f\in L^2(\Omega)$, $g\in H^1(\partial \Omega)$.
\end{assump}

\begin{assump}[Assumption for parabolic problem]\label{assum::Parabolic}
    Let $A(x_t)=[a_{\alpha\beta}(x_t)]^{d\times d}\in [W^{1,\infty}(\Omega_{T})]^{d\times d}$ satisfies that there is $\lambda>0$ such that for all $\mu\in \sR^d$ and $x_t\in\Omega_{T}$
    \begin{equation}
        \lambda\Abs{\mu}^2\le \mu^{\T}A(x_t)\mu\le \frac{1}{\lambda}\Abs{\mu}^2,
    \end{equation} 
    $\vb(x_t):=\{b_{\alpha}(x_t)\}_{\alpha\in[1,\ldots,d]}^{\T}\in [L^{\infty}(\Omega_{T})]^{d}$ and $c(x_t) \in L^{\infty}(\Omega_{T})$ satisfying $\operatorname*{ess\,inf}_{x}   c(x_t)>\frac{2d\norm{\vb(x)}_{L^{\infty}(\Omega_{T})}+2}{\lambda}.$ And $f\in L^2(\Omega_{T})$, $g\in L^2(0,T;H^1(\partial\Omega))$, $g_t\in L^2(0,T;L^2(\partial\Omega))$.
\end{assump}

\begin{remark}[A non-divergence representation of PDE operators]\label{rmk::NonDivergence}
    Note that $\fL u$  can be written as
        \begin{equation}
            \fL u:=-\sum_{\alpha,\beta=1}^{d}a_{\alpha\beta}(x)D_{\alpha\beta}u(x)+\sum_{\alpha=1}^{d}\hat{b}_{\alpha}(x)D_{\alpha}u+c(x)u,
        \end{equation}
        where $\hat{b}_{\alpha}(x)=b_{\alpha}(x)+\sum_{\beta=1}^{d}D_{\beta}a_{\alpha\beta}(x)$ and $\hat{\vb}(x)=\{\hat{b}_{\alpha}(x)\}_{\alpha\in\{1,\ldots,d\}}^{\T}$.
\end{remark}

\subsection{Barron Space}\label{sec::BarronSpace}

Our proposed \textbf{SSBE} method builds upon deep learning-based PDE solvers. Here, we introduce several essential concepts in preparation for our analysis presented in Section~\ref{sec::GeneralizationBound}.

\begin{definition}[Fully connected feed-forward neural network (FNN)]
    An $L+1$-layer FNN defined on $\sR^d$ takes the form
    \begin{equation*}
        \phi(x,\theta)=\va^{\T}\vh^{[L]}\circ\vh^{[L-1]}\circ\ldots\circ\vh^{[1]}(x),
    \end{equation*}
    where $\vh^{[l]}(x)=\sigma(\vW^{[l]}x+\vb^{[l]})$ with $\vW^{[l]}\in \sR^{m_l\times m_{l-1}}$, $\vb^{[l]}\in\sR^{m_l}$ for $l\in \{1,\ldots,L\}$, $m_0=d$ and $\va\in \sR^{m_L}$. $\sigma$ is a non-linear activation function. And $\vtheta:=\operatorname{vec}\{\va,\{\vW^{[l]},\vb^{[l]}\}_{l=1}^{L}\}$ denotes all the parameters in $\phi$.
\end{definition}
In fact, for an FNN with $\vb^{[l]}$, we can define $\tilde{x}=(x^{\T},1)^{\T}$ and $\widetilde{\vW}^{[l]}=(\vW^{[l]},\vb^{[l]})$, then $\widetilde{\vW}^{[l]}\tilde{x}=\vW^{[l]}x+\vb^{[l]}$. Thus we abuse the notation of $\widetilde{\vW}^{[l]}, \tilde{x}$ and $\vW^{[l]}, x$ in the following discussion.

We focus on two-layer FNN with defined path norm throughout the paper and introduce Barron space with its Barron norm below.

\begin{definition}[Path norm]
    The path norm of a two-layer FNN
    \begin{equation*}
        \phi(x,\theta)=\va^{\T}\sigma(\vW x)=\sum_{k=1}^{m}a_k\sigma(\vw_k^{\T}x),
    \end{equation*}
    with an activation $\sigma$ and parameter $\vtheta$ is
    \begin{equation*}
        \norm{\vtheta}_{\fP}=\sum_{k=1}^{m}\abs{a_k}\abs{\vw_k}^3,
    \end{equation*}
    where $\abs{\cdot}$ of a vector means the $l_2$ norm.
\end{definition}

\begin{definition}[Barron pair]
    A function pair $(f,g): \Omega\times\partial\Omega\to\sR^2$ is said to belong to the class of Barron type if it takes an integral representation
    \begin{equation}\label{eq::BarronPair}
    \left\{
        \begin{aligned}
            f(x)&= \Exp_{(a,\vw)\sim\rho}a\left[\vw^{\T}A(x)\vw\sigma''(\vw^{\T}x)+\hat{\vb}^{\T}(x)\vw\sigma'(\vw^{\T}x)+c(x)\sigma(\vw^{\T}x)\right], &&\quad x\in \Omega, \\
            g(x)&= \Exp_{(a,\vw)\sim\rho} a\sigma(\vw^{\T}x), &&\quad x\in \partial \Omega.
        \end{aligned}
        \right.
    \end{equation}
    where $\rho$ is a probability distribution over $\sR^{d+1}$. The associate Barron norm is
    \begin{equation}
        \norm{(f,g)}_{\fB}=\inf_{\rho\in\fP}\left(\Exp_{(a,\vw)\sim\rho}\Abs{a}^2\Abs{\vw}^6\right)^{1/2},
    \end{equation}
    where $\fP$ is the class of probability distribution such that~\eqref{eq::BarronPair} holds. And the Barron-type space is 
    \begin{equation}
        \fB(\Omega,\partial\Omega):=\{(f,g): \Omega\times\partial\Omega\to\sR^2|\norm{(f,g)}_{\fB}<+\infty\}.
    \end{equation}
\end{definition}
\begin{remark}
    For a given bounded domain $\Omega$, the Dirichlet problem $\fL u=f, x\in\Omega;$ $u=g, x\in\partial \Omega$ is considered over $\Omega$. If $(f,g)$ belongs to the class of Barron pair, then the solution $u$ takes the form $u=\Exp_{(a,\vw)\sim \rho}a\sigma(\vw^{\T}x)$ and belongs to the Barron space, which was introduced with the motivation to create a “reasonably simple and transparent framework for machine learning”~\cite{E2020Some} and consists of the union of all reproducing kernel Hilbert spaces (RKHS) introduced by random feature kernels~\cite{E2022Thefluid}.
\end{remark}
 
\begin{definition}[Barron triplet]
    Let $x_t=(x,t)\in \sR^{d+1}$, a function triplet $(f(x_t),g(x_t),\nu(x)): \Omega_{T}\times\partial\Omega_{T}\times\Omega\to\sR^3$ is said to belong to the class of Barron type if it takes an integral representation
    \begin{equation}\label{eq::BarronTriplet}
\left\{
\begin{aligned}
    f(x_t) &= \mathbb{E}_{(a,\vw)\sim\rho} \, a \Big[\vw^{\T} A(x_t) \vw\, \sigma''(\vw^{\T}x_t)
    + \hat{\vb}^{\T}(x_t) \vw\, \sigma'(\vw^{\T}x_t)
    + c(x_t) \sigma(\vw^{\T}x_t)\Big],
    &&\quad x_t \in \Omega_T, \\
    g(x_t) &= \mathbb{E}_{(a,\vw)\sim\rho} \, a\, \sigma(\vw^{\T}x_t),
    &&\quad x_t \in \partial \Omega_T, \\
    \nu(x) &= \mathbb{E}_{(a,\vw)\sim\rho} \, a\, \sigma(\vw^{\T}(x,0)),
    &&\quad x \in \Omega.
\end{aligned}
\right.
\end{equation}
where $\rho$ is a probability distribution over $\sR^{d+2}$. The associate Barron norm is
    \begin{equation}
        \norm{(f,g,\nu)}_{\fB}=\inf_{\rho\in\fP}\left(\Exp_{(a,\vw)\sim\rho}\Abs{a}^2\Abs{\vw}^6\right)^{1/2},
    \end{equation}
    where $\fP$ is the class of probability distribution such that~\eqref{eq::BarronTriplet} holds. And the Barron-type space is 
    \begin{equation}
        \fB(\Omega_{T},\partial\Omega_{T},\Omega):=\{(f,g),\nu: \Omega_{T}\times\partial\Omega_{T}\times\Omega\to\sR^3|\norm{(f,g,\nu)}_{\fB}<+\infty\},
    \end{equation}
    where $\Omega_{T}=\Omega\times[0,T]$, $\partial\Omega_{T}:=\partial\Omega\times [0,T]$.
\end{definition}
\section{Failure of PINNs to Achieve $H^1$ Convergence} \label{sec::CounterExample}

In this section, we construct a two-dimensional model to demonstrate that the Physics-Informed Neural Network (PINN) method is unsuitable for solving partial differential equations (PDEs) requiring convergence in the $H^1$ norm. Consider the PDE defined on a domain $\Omega\in\sR^2$: $$\fL u(x_1,x_2)=f(x_1,x_2), \ (x_1,x_2)\in \Omega,\quad \text{and}\quad u(x_1,x_2)=g(x_1,x_2), \ (x_1,x_2)\in \partial\Omega.$$
The PINN loss function associated with the approximation $v$ is defined as
\begin{equation}
    \fR_{D}^{\operatorname{PINN}}(v)=\frac{1}{\Abs{\Omega}}\norm{\fL v-f}_{L^2(\Omega)}^2+\frac{\lambda}{\Abs{\partial\Omega}}\norm{v-g}_{L^2(\partial\Omega)}^2.
\end{equation}
Given an operator $\mathcal{L}$, we can construct a sequence of functions ${u_i(x_1,x_2)}_{i=1}^{\infty}$ satisfying
\begin{equation}
    \begin{aligned}
        \lim_{i\to \infty}\frac{1}{\Abs{\Omega}}\norm{\fL u_i-f}_{L^2(\Omega)}^2+\frac{1}{\Abs{\partial\Omega}}\norm{ u_i-g}_{L^2(\partial\Omega)}^2\to 0,\quad \norm{\fB u_i-g}_{L^2(\partial\Omega)}^2\neq 0.
    \end{aligned}
    \end{equation} while simultaneously exhibiting non-convergence in the $H^1$ norm:
 \begin{equation}
        \frac{\norm{u_i-u}_{H^1(\Omega)}}{\norm{u}_{H^1(\Omega)}}=O(1),\quad \text{for all $i\in \{1,\ldots,\infty\}$}.
    \end{equation}

\noindent\textbf{Example Construction}
To illustrate this phenomenon, consider a specific example. 
Let $\Omega=B(0,1)$ be the unit ball in $\mathbb{R}^2$, and set the operator $\mathcal{L}=-\Delta$ with $f(x_1,x_2)=4$ and boundary condition $g(x_1,x_2)=0$. The PDE problem becomes
\begin{equation}\label{eq::CounterExample}
\left\{
\begin{aligned}
    -\Delta u &= 4, 
    &&\quad (x_1, x_2) \in B(0,1), \\
    u &= 0, 
    &&\quad (x_1, x_2) \in \partial B(0,1).
\end{aligned}
\right.
\end{equation}
with the exact solution $u(x_1,x_2)=1-(x_1^2+x_2^2)$.

Now define a sequence of perturbation functions $\{v_i\}_{i=1}^{\infty}$ in polar coordinates $(r,\theta)$ as
\begin{equation}\label{eq::PerturbationFunction}
v_i(r,\theta)=\frac{1}{i}\sin(i\theta) r^i,
\end{equation}
where $r=\sqrt{x_1^2+x_2^2}$. Each $v_i$ is harmonic in $B(0,1)$, satisfying $-\Delta v_i=0$ for all $i$. These perturbations exhibit the following properties:

\begin{itemize}
\item The boundary $L^2$ norm of each $v_i$ is given by
\begin{equation}\label{eq::L2BoundaryPerturbation}
           \norm{v_i}_{L^2(\partial B(0,1))}^2=\int_{0}^{2\pi}\frac{1}{i^2}\sin^2 (i\theta)\diff{\theta}=\frac{\pi}{i^2}=O\left(\frac{1}{i^2}\right).
        \end{equation}

\item The domain $H^1$ norm of each $v_i$ is calculated as
            \begin{equation}
                \norm{v_i}_{L^2(B(0,1))}^2=\int_{0}^{1}\int_{0}^{2\pi}\frac{1}{i^2}\sin^2 (i\theta)r^{2i}\diff{\theta}\diff{r}=\frac{\pi}{(2i+1)i^2},
            \end{equation} 
            and
            \begin{equation}
                \begin{aligned}
                    \norm{D_{x_1} v_i}_{L^2(B(0,1))}^{2}+\norm{D_{x_2} v_i}_{L^2(B(0,1))}^{2}&=\int_{B(0,1)}(D_{x_1} v_i)^2+(D_{x_2} v_i)^2\diff{x_1}\diff{x_2}
                    \\&=\int_{0}^{1}\diff{r}\int_{0}^{2\pi}\left((D_r v_i)^2+\frac{1}{r^2}(D_{\theta} v_i)^2\right)r\diff{\theta}=\frac{\pi}{i}.
                \end{aligned}
            \end{equation} 
            Thus, the full $H^1$ norm satisfies
            \begin{equation}\label{eq::H1DomainPerturbation}
                 \norm{v_i}_{H^1(B(0,1))}^{2}=\frac{(2i^2+i+1)\pi}{(2i+1)i^2}=O\left(\frac{1}{i}\right).
            \end{equation}
\end{itemize}

Consider the sequence $u_i=u+\frac{v_i}{\norm{v_i}_{H^1(B(0,1))}}$. According to equations~\eqref{eq::L2BoundaryPerturbation} and~\eqref{eq::H1DomainPerturbation}, we observe that
    \begin{equation}
        \lim_{i\to\infty} \left(\frac{1}{\Abs{B(0,1)}}\norm{-\Delta u_i-4}_{L^2(B(0,a))}^2+\frac{1}{\Abs{\partial B(0,1)}}\norm{u_i-0}_{L^2(\partial\Omega)}^2\right)=\lim_{i\to\infty}O\left(\frac{1}{i}\right)\to 0.
    \end{equation}
which means the PINN objective approaches zero. However, considering the relative $H^1$ norm error between $u_i$ and the exact solution $u$, we find
    \begin{equation}
        \frac{\norm{u_i-u}_{H^1(\Omega)}^2}{\norm{u}_{H^1(\Omega)}^2}= \frac{1}{\norm{u}_{H^1(\Omega)}^2}=O(1),\quad \text{ for all $i\in\{1,\ldots,\infty\}$},
    \end{equation}
since $\norm{\frac{v_i}{\norm{v_i}_{H^1(B(0,1))}}}_{H^1(\Omega)}^2=1$.

We present a pedagogical counterexample illustrating that achieving minimal boundary loss in the $L^2$-norm (on the order of $O(\varepsilon)$) can still lead to a significant $O(1)$ relative error between the numerical solution and the ground truth. This phenomenon arises because the conventional boundary loss does not adequately capture the intrinsic regularity constraints imposed by PDE operators. Consequently, a negligible boundary mismatch can propagate disproportionately, causing substantial errors within the domain.
\section{SSBE: Sobolev-Stable Boundary Enforcement}\label{sec::Experiments}
To address the critical limitation discussed above, we introduce a modified boundary loss formulation named \textbf{SSBE}, which redefines the population loss for an objective function $v$ as follows:
\begin{equation}\label{eq::H1convergenceAlgorithm}
    \fR_{D}(v)=\frac{1}{\Abs{\Omega}}\norm{\fL v-f}_{L^2(\Omega)}^2+\frac{\lambda}{\Abs{\partial\Omega}}\norm{\fB v-g}_{H^1(\partial\Omega)}^2,
\end{equation}
where $\lambda>0$ is an adjustable weight parameter. This reformulated loss explicitly incorporates regularity constraints through the $H^1$ norm, aligning the error propagation dynamics more closely with the underlying PDE structure and thus enhancing convergence properties. For simplicity in analysis, we set the weight parameter to $\lambda=\frac{\Abs{\partial\Omega}}{\Abs{B_{r}^{\kappa}}}$, where $\Abs{B_{r}^{\kappa}}$ represents the volume of a $(d-1)$-dimensional ball of radius $\kappa$, which remains invariant for all $r\in\{1,\ldots,l\}$.

Consider the Cartesian coordinate-function pairs $\{\fX_r,\gamma_{r}\}_{r=1}^{l}$ or $\{(\fX_{r},t),\gamma_{r}\}_{r=1}^{l}$ used to straighten the boundary $\partial\Omega$ or $\partial\Omega_{T}$, as discussed in Section~\ref{sec::BoundarySobolev}. For the objective function $v\in H^2(\Omega)$ or, in the time-dependent case, $v\in L^2(0,T;H^2(\Omega))$ with $v_t\in L^2(0,T;L^2(\Omega))$, the SSBE algorithm~\eqref{eq::H1convergenceAlgorithm} applied to elliptic and parabolic problems can be described explicitly with definitions in Section~\ref{sec::BoundarySobolev} as follows:
\begin{itemize}
    \item Elliptic problem
        \begin{equation}\label{eq::AlgorithmRepre4EllipticPopulation}
            \begin{aligned}
                \fR_{D}(v)
                &=\frac{1}{\Abs{\Omega}}\norm{\fL v(x)-f(x)}_{L^2(\Omega)}^{2}+\sum_{r=1}^{l}\frac{1}{\Abs{B_{r}^{\kappa}}}\norm{{_{r}v(x_r')}-{_{r}g(x'_r)}}_{H_{r}^{1}(B_{r}^{\kappa})}^2.
            \end{aligned}
        \end{equation}
Given randomly sampled data points $S_{\Omega}=\{x^{i}\}_{i=1}^{n{\Omega}}\subset \sR^{d}$ from the domain and $S_r=\{{x_{r}'}^{i}\}_{i=1}^{n_r}\subset \sR^{d-1}$ from each straightened boundary $B_{r}^{\kappa}$, the corresponding empirical loss is:
        \begin{equation}\label{eq::AlgorithmRepre4EllipticEmpirical}
            \begin{aligned}
                \fR_{S}(v)
                &=\frac{1}{n_{\Omega}}\sum_{i=1}^{n_{\Omega}}\Abs{\fL v(x^{i})-f(x^i)}^{2}\\
                &~~~+\sum_{r=1}^{l}\frac{1}{n_r}\sum_{i=1}^{n_r}\left(\Abs{{_{r}v({x_{r}'}^{i})}-{_{r}g({x_{r}'}^{i})}}^2+\sum_{\alpha=1}^{d-1}\Abs{D_{x_{r_{\alpha}}}{_{r}v({x_{r}'}^{i})}-D_{x_{r_{\alpha}}}{_{r}g({x_{r}'}^{i})}}^2\right).
            \end{aligned}
        \end{equation}
    \item Parabolic problem
        \begin{equation}\label{eq::AlgorithmRepre4ParabolicPopulation}
            \begin{aligned}
                \fR_D(v)
                &=\frac{1}{\Abs{\Omega_{T}}}\norm{v_t(x_t)+\fL v(x_t)-f(x_t)}_{L^2(\Omega_{T})}^2+\frac{1}{\Abs{\Omega}}\norm{v(x,0)-\nu(x)}_{L^2(\Omega)}^2\\
                &+\frac{1}{T}\left(\frac{1}{\abs{\partial\Omega}}\norm{v_t(x,t)-g_t(x_t)}_{L^2(0,T;L^2(\partial\Omega))}^2+\sum_{r=1}^{l}\frac{1}{\Abs{B_{r}^{\kappa}}}\norm{{_{r}v(x_{r}',t)}-{_{r}g(x'_{r},t)}}_{L^2(0,T;H_{r}^{1}(B_{r}^{\kappa}))}^2\right).
            \end{aligned}
        \end{equation}
        
Given randomly sampled data points $S_{\Omega_{T}}=\{x_{t}^{i}\}_{i=1}^{n{\Omega_{T}}}\subset \sR^{d+1}$ in the domain $\Omega_{T}$, boundary samples $S_{\partial\Omega}=\{(x_{\partial\Omega}^{i},t_j)\}_{i=1,j=1}^{n{\partial\Omega}, n_{T}}\subset\sR^{d+1}$ on $\partial\Omega_{T}$, samples $S_r=\{({x_{r}'}^{i},t_j)\}_{i=1,j=1}^{n_r,n_T}\subset \sR^{d}$ on each straightened boundary segment $B{r}^{\kappa}\times [0,T]$ with $r\in\{1,\ldots,l\}$, and initial domain samples $S_{\Omega}=\{x^{i}\}_{i=1}^{n{\Omega}}\subset \sR^{d}$, the empirical loss is expressed as follows:
         \begin{equation}\label{eq::AlgorithmRepre4ParabolicEmpirical}
             \begin{aligned}
                 \fR_{S}(v)
                &=\frac{1}{n_{\Omega}}\sum_{i=1}^{n_{\Omega_{T}}}\Abs{v_t(x_{t}^{i})+\fL v(x_{t}^{i})-f(x_{t}^i)}^{2}+\frac{1}{n_{\Omega}}\sum_{i=1}^{n_{\Omega}}\Abs{v(x^i,0)-\nu(x^i)}^2\\
                &+\frac{1}{n_{\partial\Omega}n_T}\sum_{i=1,j=1}^{n_{\partial\Omega},n_T}\Abs{v_t(x_{\partial\Omega}^i,t_j)-g_t(x_{\partial\Omega}^i,t_j)}^2\\
                &+\sum_{r=1}^{l}\frac{1}{n_rn_T}\sum_{i=1,j=1}^{n_r,n_{T}}\left(\Abs{{_{r}v({x_{r}'}^{i},t_j)}-{_{r}g({x_{r}'}^{i},t_j)}}^2+\sum_{\alpha=1}^{d-1}\Abs{D_{x_{r_{\alpha}}}{_{r}v({x_{r}'}^{i},t_j)}-D_{x_{r_{\alpha}}}{_{r}g({x_{r}'}^{i},t_j)}}^2\right).
             \end{aligned}
         \end{equation}
\end{itemize}
\begin{remark}
   The proposed algorithm can also be naturally extended to hyperbolic problems, though we do not elaborate on these cases to maintain clarity and conciseness in this paper.
\end{remark}

In the subsequent sections, we will analyze the stability and convergence properties of the proposed SSBE loss and validate the effectiveness of our method through numerical experiments.
\section{$H^1$-Stability and Convergence Analysis of \textbf{SSBE} for PDEs}\label{sec::Stability}
In this section, we present a comprehensive stability analysis to demonstrate \textbf{SSBE} reliably achieves $H^1$-convergence for both elliptic and parabolic partial differential equations. We rigorously establish the conditions under which the proposed method maintains stability in the $H^1$ norm, thereby ensuring its robustness in practical applications. Through a detailed examination of the underlying mathematical framework, we elucidate the key factors that govern the algorithm's performance and offer theoretical insights that substantiate its convergence properties. 

Let us start with an extension theorem from boundary to interior domain, it states as follows:
\begin{theorem}[Extension Theorem: $H^{1}(\partial\Omega)\to H^{1}(\Omega)$]\label{thm::ExtensionH1}
    Suppose $\Omega\in \sR^{d}$ belongs to the class $\fC^{0,1}$ with $d\ge 2$, and there is an $g$ defined on $\partial \Omega$ such that $g\in H^{1}(\partial \Omega)$, then there are a bounded linear operator
    \begin{equation*}
        T: H^{1}\left(\partial \Omega\right) \rightarrow H^{1} \left(\Omega\right).
    \end{equation*}
    such that $Tg \lfloor_{\partial \Omega} = g$ and a constant $C$ depending only on $\Omega, d$ such that
    \begin{equation}
        \norm{Tg}_{H^1(\Omega)}\le C\norm{g}_{H^1(\partial\Omega)}.
    \end{equation}
\end{theorem}
\begin{remark}
    While a more canonical theorem exists in the literature which states the existence of bounded linear operator $T:W^{1-\frac{1}{p},p}(\partial\Omega)\to W^{1,p}(\Omega)$ for $p>1$, its proof inherently relies on theories of fractional derivatives. To maintain the rigor and self-contained nature of this work, we provide an alternative fundamental proof that circumvents these technical complexities which can be found in Section~\ref{sec::Extension} of Appendix.
\end{remark}

\noindent \textbf{Stability Analysis for Elliptic Problem}
\begin{theorem}[Stability for elliptic problem]\label{thm::Stability4EllipticProblem}
    Suppose Assumption~\ref{assum::Elliptic} holds, $\Omega\subset \sR^d$ belongs to the class $\fC^{1,0}$. Let $u\in H^1(\Omega)$ be the solution to~\eqref{eq::EllipticProblem} and, for objective function $v\in H^2(\Omega)$, define
    \begin{equation*}
        \begin{aligned}
            \fR_{D}(v)
                &=\frac{1}{\Abs{\Omega}}\norm{\fL v(x)-f(x)}_{L^2(\Omega)}^{2}+\frac{1}{\Abs{B_{r}^{\kappa}}}\norm{v(x)-u(x)}_{H^1(\partial\Omega)}^2\\
                &=\frac{1}{\Abs{\Omega}}\norm{\fL v(x)-f(x)}_{L^2(\Omega)}^{2}+\sum_{r=1}^{l}\frac{1}{\Abs{B_{r}^{\kappa}}}\norm{{_{r}v(x_r')}-{_{r}g(x'_r)}}_{H_{r}^{1}(B_{r}^{\kappa})}^2.
        \end{aligned}
    \end{equation*}
    Then there is a constant $C$ depending only on $\Omega$, $d$, $A(x)$, $\vb(x)$ and $c(x)$ such that for all $v\in H^2(\Omega)$
    \begin{equation}
        \norm{u-v}_{H^1(\Omega)}<C\fR_{D}(v).
    \end{equation}
\end{theorem}
Note that, by definition, $\Abs{B_{r}^{\kappa}}$ is the volume of $(d-1)$-dimensional ball with radius $\kappa$ and stay invariant for all $r\in \{1,\ldots,l\}$.
\begin{proof}
    By letting $q:=f-\fL v$ in $\Omega$, $h:=u-v$ on $\partial\Omega$ and $w:=u-v$ in $\Omega$, we have
\begin{equation}
\left\{
\begin{aligned}
    \fL w &= q, 
    &&\quad \text{in $\Omega$}, \\
    w &= h, 
    &&\quad \text{on $\partial\Omega$}.
\end{aligned}
\right.
\end{equation}

    By Theorem~\ref{thm::ExtensionH1}, there is an extension $Eh\in H^1(\Omega)$ such that $Eh\lfloor_{\partial\Omega}=h$ and $\norm{Eh}_{H^1(\Omega)}\le C\norm{h}_{H^1(\partial \Omega)}$. Further defining $\tilde{w}=w-Eh$, then $\tilde{w}$ is the solution to 
    \begin{equation}\label{eq::BVPTransfer}
\left\{
\begin{aligned}
    \fL \tilde{w} &= q - \fL Eh, 
    &&\quad \text{in $\Omega$}, \\
    \tilde{w} &= 0, 
    &&\quad \text{on $\partial\Omega$}.
\end{aligned}
\right.
\end{equation}

    By multiplying $\tilde{w}$ on both sides of the first equation of~\eqref{eq::BVPTransfer} and integration over $\Omega$, there is
    \begin{equation}\label{eq::Estimate4LHS}
        \begin{aligned}
             (LHS)&=\int_{\Omega}\tilde{w}\fL\tilde{w}\diff{x}=\int_{\Omega}-\tilde{w}\operatorname{div}(A(x)D\tilde{w})+\sum_{\alpha=1}^{d}\tilde{w}b_{\alpha}(x)D_{\alpha}\tilde{w}+c(x)\tilde{w}^2\diff{x}\\
             &=\int_{\Omega}D\tilde{w}A(x)D\tilde{w}+\sum_{\alpha=1}^{d}\tilde{w}b_{\alpha}(x)D_{\alpha}\tilde{w}+c(x)\tilde{w}^2\diff{x}\\
             &\ge \lambda\norm{D\tilde{w}}_{L^2(\Omega)}^2+\int_{\Omega}\sum_{\alpha=1}^{d}\tilde{w}b_{\alpha}(x)D_{\alpha}\tilde{w}+c(x)\tilde{w}^2\diff{x},
        \end{aligned}
    \end{equation}
    also, we have
    \begin{equation}\label{eq::Estimate4RHS}
        \begin{aligned}
            (RHS)&=\int_{\Omega}\tilde{w}(q-\fL Eh)\diff{x}\\
            &=\int_{\Omega}\tilde{w}q-D\tilde{w}A(x)DEh-\sum_{\alpha=1}^{d}\tilde{w}b_{\alpha}(x)D_{\alpha}Eh-\tilde{w}c(x)Eh\diff{x}\\
            &\le \frac{\lambda}{4}\norm{D\tilde{w}}_{L^2(\Omega)}^2+\frac{d\norm{\vb(x)}_{L^{\infty}(\Omega)}+2}{\lambda}\norm{\tilde{w}}_{L^2(\Omega)}^2+\left(\frac{\lambda}{4}+\frac{1}{\lambda^3}\right)\norm{DEh}_{L^2(\Omega)}^2\\
            &~~~+\frac{\lambda\norm{c(x)}_{L^{\infty}(\Omega)}}{4}\norm{Eh}_{L^2(\Omega)}^2+\frac{\lambda}{4}\norm{q}_{L^2(\Omega)}^2.
        \end{aligned}
    \end{equation}
    Combining~\eqref{eq::Estimate4LHS} and~\eqref{eq::Estimate4RHS}, we have
    \begin{equation}
        \begin{aligned}
            \lambda\norm{D\tilde{w}}_{L^2(\Omega)}^2+\int_{\Omega}c(x)\tilde{w}^2\diff{x}&\le (RHS)-\int_{\Omega}\sum_{\alpha=1}^{d}\tilde{w}b_{\alpha}(x)D_{\alpha}\tilde{w}\diff{x}\\
            &\le (RHS)+\frac{\lambda}{4}\norm{D\tilde{w}}_{L^2(\Omega)}^2+\frac{d\norm{\vb(x)}_{L^{\infty}(\Omega)}}{\lambda}\norm{\tilde{w}}_{L^2(\Omega)}^2\\
            &\le \frac{\lambda}{2}\norm{D\tilde{w}}_{L^2(\Omega)}^2+\frac{2d\norm{\vb(x)}_{L^{\infty}(\Omega)}+2}{\lambda}\norm{\tilde{w}}_{L^2(\Omega)}^2+\left(\frac{\lambda}{4}+\frac{1}{\lambda^3}\right)\norm{DEh}_{L^2(\Omega)}^2\\
            &~~~+\frac{\lambda\norm{c(x)}_{L^{\infty}(\Omega)}}{4}\norm{Eh}_{L^2(\Omega)}^2+\frac{\lambda}{4}\norm{q}_{L^2(\Omega)}^2.
        \end{aligned}
    \end{equation}
    Since $\min c(x)>\frac{2d\norm{\vb(x)}_{L^{\infty}(\Omega)}+2}{\lambda}$, we have
    \begin{equation}
        \frac{\lambda}{2}\norm{D\tilde{w}}_{L^2(\Omega)}^2\le \left(\frac{\lambda}{4}+\frac{1}{\lambda^3}\right)\norm{DEh}_{L^2(\Omega)}^2+\frac{\lambda\norm{c(x)}_{L^{\infty}(\Omega)}}{4}\norm{Eh}_{L^2(\Omega)}^2+\frac{\lambda}{4}\norm{q}_{L^2(\Omega)}^2,
    \end{equation}
    this further leads to the existence of constant $C$ such that
    \begin{equation}
        \norm{D\tilde{w}}_{L^2(\Omega)}^2\le C(\norm{Eh}_{H^1(\Omega)}^2+\norm{q}_{L^2(\Omega)}^2),
    \end{equation}
    by the Poincare's inequality and the fact that $\norm{Eh}_{H^1(\Omega)}\le C\norm{h}_{H^1(\partial \Omega)}$ for some $C>0$, we have 
    \begin{equation}
        \norm{w-Eh}_{H^1(\Omega)}^2\le C(\norm{h}_{H^1(\partial\Omega)}^2+\norm{q}_{L^2(\Omega)}^2),
    \end{equation}
    hence
    \begin{equation}
        \norm{w}_{H^1(\Omega)}^2\le C\fR_D(v), 
    \end{equation}
    where $C$'s are constants being different from line to line.
\end{proof}

\noindent \textbf{Stability Analysis for Parabolic Problem}

\begin{theorem}[Stability for parabolic problem]
    Suppose Assumption~\ref{assum::Parabolic} holds, $\Omega\subset \sR^d$ belongs to the class $\fC^{1,0}$. Let $u\in L^2(0,T;H^1(\Omega))$ be the solution to~\eqref{eq::ParabolicProblem}, and, for any objective function $v\in L^2(0,T;H^2(\Omega))$, $v_t\in L^2(0,T;L^2(\Omega))$, define
    \begin{equation*}
        \begin{aligned}
            \fR_D(v)
            &=\frac{1}{\Abs{\Omega_{T}}}\norm{v_t(x_t)+\fL v(x_t)-f(x_t)}_{L^2(\Omega_{T})}^2+\frac{1}{\Abs{\Omega}}\norm{v(x,0)-\nu(x)}_{L^2(\Omega)}^2\\
            &~~~+\frac{1}{\Abs{\partial\Omega}T}\norm{v_t(x_t)-g_t(x_t)}_{L^2(0,T;L^2(\partial\Omega))}^2+\frac{1}{\Abs{B_{r}^{\kappa}}T}\norm{v(x_t)-g(x_t)}_{L^2(0,T;H^1(\partial\Omega))}^2\\
            &=\frac{1}{\Abs{\Omega_{T}}}\norm{v_t(x_t)+\fL v(x_t)-f(x_t)}_{L^2(\Omega_{T})}^2+\frac{1}{\Abs{\Omega}}\norm{v(x,0)-\nu(x)}_{L^2(\Omega)}^2\\
            &~~~+\frac{1}{\Abs{\partial\Omega}T}\norm{v_t(x_t)-g_t(x_t)}_{L^2(0,T;L^2(\partial\Omega))}^2+\sum_{r=1}^{l}\frac{1}{\Abs{B_{r}^{\kappa}}T}\norm{{_{r}v(x_{r}',t)}-{_{r}g(x'_{r},t)}}_{L^2(0,T;H_{r}^{1}(B_{r}^{\kappa}))}^2,
        \end{aligned}
    \end{equation*}
    then there is a constant $C$ depending only on $\Omega_{T}$, $d$, $A(x_t)$, $\vb(x_t)$ and $c(x_t)$ such that for all $v\in L^2(0,T;H^2(\Omega))$, $v_t\in L^2(0,T;L^2(\Omega))$
    \begin{equation}
        \max_{[0,T]}\norm{u-v}_{L^2(\Omega)}^{2}+ \norm{u-v}_{L^2(0,T;H^1(\Omega))}^{2}+\norm{u_t-v_t}_{L^2(0,T;L^2(\Omega))}^{2}<C\fR_D(v).
    \end{equation}
\end{theorem}
\begin{proof}
     By letting $q(x_t):=f(x_t)-\fL_{t} v(x_t)-v_{t}(x_t)$ in $\Omega_{T}$, $h(x_t):=g(x_t)-v(x_t)$ on $\partial\Omega_{T}$, $p(x)=\nu(x)-v(x,0)$ in $\Omega$ and $w(x_t):=u(x_t)-v(x_t)$ in $\Omega_T$, we have
\begin{equation}
\left\{
\begin{aligned}
    w_t + \fL_t w &= q(x_t), 
    &&\quad x_t \in \Omega_T, \\
    w(x_t) &= h(x_t), 
    &&\quad x_t \in \partial \Omega_T, \\
    w(x, 0) &= p(x), 
    &&\quad x \in \Omega.
\end{aligned}
\right.
\end{equation}

    By Theorem~\ref{thm::ExtensionH1}, there is an extension $E: H^{1}(\partial\Omega)\to H^1(\Omega)$ such that $Eh\lfloor_{\partial\Omega\times \{t\}}=h(x_t)$ , $\norm{Eh}_{H^1(\Omega)}\le C\norm{h}_{H^1(\partial \Omega)}$ and $Eh_{t}\lfloor_{\partial\Omega\times \{t\}}=h_t(x_t)$ , $\norm{Eh_t}_{H^1(\Omega)}\le C\norm{h_t}_{H^1(\partial \Omega)}$ for all $t\in[0,T]$. Further defining $\tilde{w}=w-Eh$, then $\tilde{w}$ is the solution to 
\begin{equation}
\left\{
\begin{aligned}
    \tilde{w}_t + \fL_t \tilde{w} &= q(x_t) - \fL_t Eh(x_t) - Eh_t(x_t),
    &&\quad x_t \in \Omega_T, \\
    \tilde{w}(x_t) &= 0, 
    &&\quad x_t \in \partial \Omega_T, \\
    \tilde{w}(x, 0) &= p(x) - Eh(x,0), 
    &&\quad x \in \Omega.
\end{aligned}
\right.
\end{equation}

By proof of Theorem~\ref{thm::Stability4EllipticProblem}, one can directly obtain that
    \begin{equation}
        \frac{\diff}{\diff{t}}\norm{\tilde{w}}_{L^2(\Omega)}^2+\frac{\lambda}{2}\norm{\tilde{w}}_{H^1(\Omega)}^2\le C(\norm{h}_{H^1(\partial\Omega)}^2+\norm{q}_{L^2(\Omega)}^2+\norm{h_t}_{L^2(\Omega)}^2).
    \end{equation}
    
    Then by standard proof in~\cite{Evans2010Partial}, we can obtain the consequence.
\end{proof}
\section{Error Analysis: A Priori Estimates for \textbf{SSBE}}\label{sec::GeneralizationBound}
In this section, we establish a priori error bounds for the \textbf{SSBE} method when applied to elliptic and parabolic problems, with the aim of connecting the theoretical guarantees for the population loss with the practical performance governed by the empirical loss. While the convergence behavior of the algorithm has been illustrated in previous sections, a rigorous quantification of the \emph{generalization gap}—the discrepancy between the idealized population risk and its empirical counterpart—remains essential for understanding the robustness of the method under finite sampling. 

For the purpose of obtaining explicit and tractable error estimates, we restrict the PDE forcing term to lie in a Barron-type function space as mentioned in Section~\ref{sec::BarronSpace}, which allows us to leverage tools from statistical learning theory to derive bounds that depend polynomially on the sampling complexity. We emphasize that this regularity assumption is introduced solely for the sake of theoretical analysis; in practice, the \textbf{SSBE} algorithm is applicable beyond the Barron-type setting and remains effective for a wide range of PDE problems without this restriction.

We structure our analysis as follows:

\begin{itemize}
\item \textbf{Notations and Assumptions}: To begin our analysis, we first state the assumptions and introduce necessary notations.
\item \textbf{Generalization Error Estimates}: We separately provide explicit \textit{a posteriori} and \textit{a priori} generalization error bounds for both elliptic and parabolic problems.
\item \textbf{Technical Lemmas and Sketch of Proof}: We introduce the key technical lemmas and provide an outline for the proof, highlighting essential insights, together with a conceptual proof diagram as Figure~\ref{fig::Sketch}.
\end{itemize}

\paragraph{Notations and Assumptions}
To begin, we introduce necessary assumptions and notations used throughout this section:
\begin{itemize}
    \item Domain $\Omega \subseteq [0,1]^d$ and $\Omega_T \subseteq [0,1]^{d+1}$ are of class $\fC^{1,0}$.
    \item  Operators $\fL$ and $\fL_t$ are assumed in non-divergence form (Remark~\ref{rmk::NonDivergence}).
    \item Coefficients of PDEs are uniformly bounded by a constant $M$, i.e. $\norm{A(x)}_{L^{\infty}(\Omega)}$, $\norm{\hat{\vb}(x)}_{L^{\infty}(\Omega)}$, $\norm{c(x)}_{L^{\infty}(\Omega)}<M$, where $\norm{\cdot}_{L^{\infty}(\Omega)}$ for vector or matrix valued function means the entry-wise norm.
    \item The forcing terms satisfy $|f|<1$ in $\Omega$ or $\Omega_T$, and boundary/initial data satisfy $|g|,|\nabla g|,|\nu|<1$.
    \item Straighten functions $\{\gamma_r\}_{r=1}^{l}$ are bounded by a constant $\widetilde{M}$, i.e. $\Abs{\gamma_r(x_r')}<\widetilde{M}$ for all $x_r'\in B_r^{\kappa}$ and $r\in \{1,\ldots,l\}.$
    \item Neural networks are two-layer structures, i.e. $u(x,\theta)=\sum_{k=1}^{m}a_k\sigma(\vw^{\T}_kx)$ is a two-layer neural network with activation $\sigma(x)=\frac{1}{6}\operatorname{ReLU}{\color{red}^{3}}(x)=\frac{1}{6}\max^3\{x,0\}$ and $\theta=\operatorname{vec}\{a_k,\vw_k\}_{k=1}^{m}$. $u(x,t,\theta)=\sum_{k=1}^{m}a_k\sigma(\vw^{\T}_kx+w^t_kt)$ is a two-layer neural network with activation $\sigma(x)=\frac{1}{6}\operatorname{ReLU}{\color{red}^{3}}(x)=\frac{1}{6}\max^3\{x,0\}$ and $\theta=\operatorname{vec}\{a_k,\vw_k,w_k^t\}_{k=1}^{m}$.

\end{itemize}

\begin{figure}[htpb]
    \centering
    \includegraphics[scale=0.8]{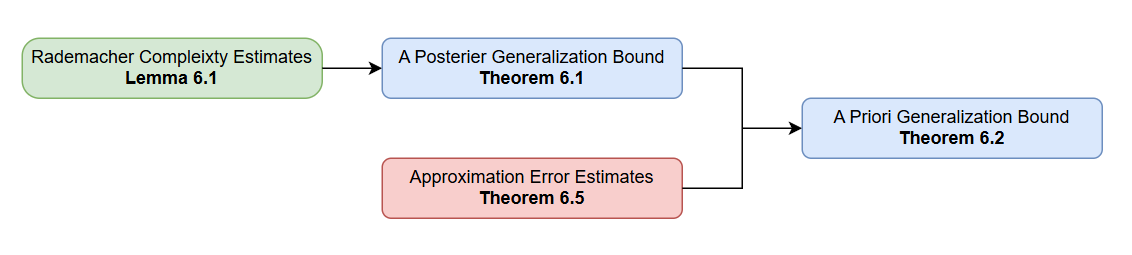}
    \caption{Conceptual Proof Diagram on Elliptic Case}
    \label{fig::Sketch}
\end{figure}

\noindent The \textbf{main consequences} for elliptic and parabolic problems are presented as follows (from Theorem~\ref{thm::APosterierGeneralizationBoundElliptic} to~\ref{thm::APrioriGeneralizationBoundParabolic}):

\paragraph{Generalization Error Estimate for Elliptic Problem}    
$\fR_{D}(\theta)$ is the population loss $\fR_D(v)$ defined in~\eqref{eq::AlgorithmRepre4EllipticPopulation} with objective function $u(x,\theta)$ and $\fR_{S}(\theta)$ is the empirical loss $\fR_{S}(v)$ defined in~\eqref{eq::AlgorithmRepre4EllipticEmpirical} with objective function $u(x,\theta)$ and random sampled set $$S=\{x^i\}_{i=1}^{n_{\Omega}}\bigcup \cup_{i=1}^{l}\{{x_r'}^i\}_{i=1}^{n_r}=:S_{\Omega}\bigcup \cup_{i=1}^{l}S_r.$$

\begin{theorem}[A posterier generalization bound]\label{thm::APosterierGeneralizationBoundElliptic}
For any $\delta\in (0,1)$, with probability at least $1-\delta$ over the choice of random sampled set $\{x^i\}_{i=1}^{n_{\Omega}}\bigcup \cup_{r=1}^{l}\{{x_{r}'}^{i}\}_{i=1}^{n_r}:=S$, for any two layer neural network $u(x,\theta)$, we have
\begin{equation}
    \begin{aligned}
        \Abs{\fR_{D}(\theta)-\fR_{S}(\theta)}&\le 2d\left(72M^2+32(\widetilde{M}+1)^2l\right)\left(d+\ln(\pi(\norm{\theta}_{\fP}+1))+\sqrt{\ln\left(l/\delta\right)}\right)\\
        &~~~\times\left(\frac{(\norm{\theta}_{\fP}+1)^2}{\sqrt{n_{\Omega}}}+\sum_{r=1}^{l}\frac{(\norm{\theta}_{\fP}+1)^2}{\sqrt{n_r}}\right).
    \end{aligned}
\end{equation}
\end{theorem}

\begin{theorem}[A priori generalization bound]\label{thm::APrioriGeneralizationBoundElliptic}
    Suppose that $(f,g)\in \fB(\Omega,\partial\Omega)$ and let
    \begin{equation}
        \theta_{S}=\arg\min_{\norm{\theta}_{\fP}<B}\fR_{S}(\theta),
    \end{equation}
    then with probability at least $1-\delta$ over the choice of random sampled set $\{x^i\}_{i=1}^{n_{\Omega}}\bigcup \cup_{r=1}^{l}\{{x_{r}'}^{i}\}_{i=1}^{n_r}:=S$, we have
    \begin{equation}
        \begin{aligned}
            \fR_{D}(\theta_{S})&\le  \frac{12M+3l d(\widetilde{M}+1)}{m}\norm{(f,g)}_{\fB}^2\\
            &~~~+2d\left(72M^2+32(\widetilde{M}+1)^2l+1\right)\left(d+\ln(\pi^2(4\norm{(f,g)}_{\fB}+2))+\sqrt{\ln\left(2l/\delta\right)}\right)\\
            &~~~\times\left(\frac{4\norm{(f,g)}_{\fB}^{2}+2}{\sqrt{n_{\Omega}}}+\sum_{r=1}^{l}\frac{4\norm{(f,g)}_{\fB}^{2}+2}{\sqrt{n_r}}\right).
        \end{aligned}
    \end{equation}
\end{theorem}
\paragraph{Generalization Error Estimate for Parabolic Problem}  
$\fR_{D}(\theta)$ is the population loss $\fR_D(v)$ defined in~\eqref{eq::AlgorithmRepre4ParabolicPopulation} with objective function $u(x,t,\theta)$ and $\fR_{S}(\theta)$ is the empirical loss $\fR_{S}(v)$ defined in~\eqref{eq::AlgorithmRepre4ParabolicEmpirical} with objective function $u(x,t,\theta)$ and random sampled set $$S=\{x^i_t\}_{i=1}^{n_{\Omega_{T}}}\bigcup \cup_{i=1}^{l}\{{x_r'}^i,t_j\}_{i=1,j=1}^{n_r,n_T}\bigcup\{x_{\partial\Omega}^{i},t_j\}_{i=1,j=1}^{n_{\partial\Omega},n_T}\bigcup \{x^i\}_{i=1}^{n_{\Omega}}=:S_{\Omega_T}\bigcup \cup_{i=1}^{l}S_r\bigcup S_{\partial\Omega}\bigcup S_{\Omega}.$$

\begin{theorem}[A posterier generalization bound]\label{thm::APosterierGeneralizationBoundParabolic}
For any $\delta\in (0,1)$, with probability at least $1-\delta$ over the choice of random sampled set $\{x^i_t\}_{i=1}^{n_{\Omega_{T}}}\bigcup \cup_{i=1}^{l}\{{x_r'}^i,t_j\}_{i=1,j=1}^{n_r,n_T}\bigcup\{x_{\partial\Omega}^{i},t_j\}_{i=1,j=1}^{n_{\partial\Omega},n_T}\bigcup \{x^i\}_{i=1}^{n_{\Omega}}:=S$, for any two layer neural network $u(x,t,\theta)$, we have
\begin{equation}
    \begin{aligned}
        \Abs{\fR_{D}(\theta)-\fR_{S}(\theta)}&\le 2d\left(72M^2+32(\widetilde{M}+1)^2l+64\right)\left(d+\ln(\pi(\norm{\theta}_{\fP}+1))+\sqrt{\ln\left(l/\delta\right)}\right)\\
        &~~~\times\left(\frac{1}{\sqrt{n_{\Omega_T}}}+\sum_{r=1}^{l}\frac{1}{\sqrt{n_rn_T}}+\frac{1}{\sqrt{n_{\partial\Omega}n_T}}+\sum_{r=1}^{l}\frac{1}{\sqrt{n_{\Omega}}}\right)(\norm{\theta}_{\fP}+1)^2.
    \end{aligned}
\end{equation}
\end{theorem}

\begin{theorem}[A priori generalization bound]\label{thm::APrioriGeneralizationBoundParabolic}
    Suppose that $(f,g,\nu)\in \fB(\Omega_{T},\partial\Omega_{T},\Omega)$ and let
    \begin{equation}
        \theta_{S}=\arg\min_{\norm{\theta}_{\fP}<B}\fR_{S}(\theta),
    \end{equation}
    then with probability at least $1-\delta$ over the choice of random sampled set $$\{x^i_t\}_{i=1}^{n_{\Omega_{T}}}\bigcup \cup_{r=1}^{l}\{({x_{r}'}^{i},t_j)\}_{i=1,j=1}^{n_r,n_{T}}\bigcup \{x^{i}\}_{i=1}^{n_{\Omega}}:=S,$$ we have
    \begin{equation}
        \begin{aligned}
            \fR_{D}(\theta_{S})&\le  \frac{12(M+1)+3l d(\widetilde{M}+1)+6}{m}\norm{(f,g,\nu)}_{\fB}^2\\
            &~~~+2d\pi\left(72(M+1)^2+32(\widetilde{M}+1)^2l+65\right)\left(d+1+\sqrt{\ln\left(2l/\delta\right)}\right)(2\norm{(f,g)}_{\fB}+2+B)^3\\
            &~~~\times\left(\frac{1}{\sqrt{n_{\Omega_{T}}}}+\sum_{r=1}^{l}\frac{1}{\sqrt{n_rn_T}}+\frac{1}{\sqrt{n_{\partial\Omega}n_T}}+\frac{1}{\sqrt{n_{\Omega}}}\right).
        \end{aligned}
    \end{equation}
\end{theorem}
\noindent \textbf{Technical Lemma and Sketch of Proof}

Given the technical complexity of the proofs, we first provide formal statements of the key technical lemmas (Lemma~\ref{lem::RademacherComplexity4BVP}, Theorem~\ref{thm::ApproximationError}) followed by a high-level sketch elucidating the core ideas. The conceptual proof diagram of the theorem proving path can be seen in Figure~\ref{fig::Sketch}. Complete technical details for the elliptic case are rigorously developed in Appendix~\ref{sec::Detail4GeneralzationError}, where the arguments rely on the interplay between functional analytic estimates and statistical learning theory. For parabolic equations, analogous generalization bounds can be derived using identical methodology: the Rademacher complexity analysis  remains unchanged, while the approximation error bounds adapt naturally to the time-evolved Sobolev spaces via standard parabolic regularity theory. To avoid redundancy, we omit repetitive derivations for parabolic problems and focus on the elliptic case as a representative framework.

In the following, we define several function spaces for $r\in \{1,\ldots,l\}$ and $\alpha\in \{1,\ldots,d-1\}$
\begin{equation}\label{eq::Def4ClassicFunctionSpace}
    \begin{aligned}
        \fF_{Q}&:=\left\{f(x,\theta):=\sum_{k=1}^{m}a_k\left[\vw_k^{\T}A(x)\vw_k\sigma''(\vw_k^{\T}x)+\hat{\vb}^{\T}(x)\vw_k\sigma'(\vw_k^{\T}x)+c(x)\sigma(\vw_k^{\T}x)\right]\ \big|\ \norm{\theta}_{\fP}<Q,\ x\in \Omega \right\}\\
        _{r}\fG_{Q}&:=\left\{g(x'_r,\theta):=\sum_{k=1}^{m}a_k\sigma( {_{r}\vw_{k}^{\T}}(x_r-\fB_{r}))\ \big|\ \norm{\theta}_{\fP}<Q,\ _{r}\vw_{k}^{\T}=\vw_{k}^{\T}\fO_{r}^{-1},\ x_{r_d}=\gamma_r(x_r'),\ x_r'\in B(0,\alpha)\right\}\\
        D_{x_{r_{\alpha}}} {_{r}\fG_{Q}}&:=\left\{D_{x_{r_{\alpha}}}g(x_r',\theta):=\sum_{k=1}^{m}a_k\left(_{r}\vw_{k,\alpha}+{_{r}\vw_{k,d}}D_{x_{r_{\alpha}}}\gamma_r(x_r')\right)\sigma'( {_{r}\vw_{k}^{\T}}(x_r-\fB_{r}))\ \big|\ \norm{\theta}_{\fP}<Q\right\}
        \end{aligned}
\end{equation}

\begin{lemma}[Rademacher complexity estimates]\label{lem::RademacherComplexity4BVP}
    We provide the Rademacher complexity of $\fF_{Q}$, $ _{r}\fG_{Q}$ and $D_{x_{r_{\alpha}}} {_{r}\fG_{Q}}$ respectively.
    \begin{enumerate}
        \item The Rademacher complexity of $\fF_{Q}$ over a set of $n_{\Omega}$ samples of $\Omega$, denoted as $S_{\Omega}=\{x^{i}\}_{i=1}^{n_{\Omega}}$, has an upper bound 
        \begin{equation}
            \operatorname{Rad}(\fF_{Q})\le \frac{4MQd^2}{\sqrt{n_{\Omega}}}.
        \end{equation}
        \item The Rademacher complexity of $ _{r}\fG_{Q}$ and $D_{x_{r_{\alpha}}} {_{r}\fG_{Q}}$ over a set of $n_r$ samples of $B_{r}^{\kappa}$, denoted as $S_r=\{{x_{r}'}^{i}\}_{i=1}^{n_r}$, has an upper bound
        \begin{equation}
            \begin{aligned}
                 \operatorname{Rad}({_{r}\fG_{Q}})&\le \frac{1}{3}\frac{Q}{\sqrt{n_r}},\\
                 \operatorname{Rad}(D_{x_{r_{\alpha}}} {_{r}\fG_{Q}})&\le \frac{(\widetilde{M}+1)Q}{\sqrt{n_r}},
            \end{aligned}
        \end{equation}
        for $r\in \{1,\ldots,l\}$ and $\alpha\in \{1,\ldots, d-1\}$.
    \end{enumerate}
\end{lemma}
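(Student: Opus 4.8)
The plan is to bound each empirical Rademacher complexity
$$\operatorname{Rad}(\mathcal{H}) = \Exp_{\xi}\sup_{h\in\mathcal{H}}\frac{1}{n}\sum_{i=1}^{n}\xi_i h(z^i),$$
where the $\xi_i$ are i.i.d.\ Rademacher signs, by reducing each of the three classes in~\eqref{eq::Def4ClassicFunctionSpace} to a single-neuron class and then stripping the nonlinearity. The central observation is that every such class is, up to the scale $Q$, the symmetric convex hull of its single-neuron elements: writing a generic element as $\sum_k a_k\phi(\vw_k;z)$ with the constraint $\norm{\theta}_{\fP}<Q$ controlling $\sum_k\Abs{a_k}\Abs{\vw_k}^{j}$ for the relevant power $j$, one factors out the coefficients $a_k$ and uses that the Rademacher complexity of a convex hull equals that of its generating set. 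This yields $\operatorname{Rad}(\mathcal{H})\le Q\,\Exp_\xi\sup_{\vw}\frac{1}{n}\Abs{\sum_i\xi_i\widetilde\phi(\vw;z^i)}$ for a normalized feature $\widetilde\phi$, after which Talagrand's contraction inequality removes the activation.

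First I would treat $_r\fG_Q$, the cleanest case. Here the feature is $\sigma({_r\vw_k^{\T}}(x_r-\fB_r))$, so after the convex-hull reduction and one application of contraction against the activation $\sigma$, the problem collapses to the Rademacher complexity of the linear class $\{x_r'\mapsto \vw^{\T}(x_r-\fB_r)\}$ over the bounded ball $B_r^{\kappa}$. Bounding $\Exp_\xi\Abs{\sum_i\xi_i(\cdots)}\le\sqrt{\sum_i(\cdots)^2}$ by Jensen and inserting the explicit radius of $B_r^{\kappa}$ together with the normalizing power of $\Abs{\vw}$ produces the stated constant $\tfrac13$ and the $Q/\sqrt{n_r}$ rate. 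For $D_{x_{r_\alpha}}{_r\fG_Q}$ the only change is that the neuron carries the prefactor $\big({_r\vw_{k,\alpha}}+{_r\vw_{k,d}}\,D_{x_{r_\alpha}}\gamma_r\big)$ together with $\sigma'$ in place of $\sigma$; since $\gamma_r\in\fC^{0,1}$ has $\Abs{D_{x_{r_\alpha}}\gamma_r}\le\widetilde M$, this prefactor is bounded in absolute value by $(\widetilde M+1)\Abs{\vw}$, which is absorbed into the path norm and accounts for the factor $(\widetilde M+1)$.

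The substantive case is $\fF_Q$, and this is where the main obstacle lies: its elements are not compositions of $\sigma$ with a linear map but sums of three terms $\vw_k^{\T} A(x)\vw_k\,\sigma''+\hat{\vb}^{\T}(x)\vw_k\,\sigma'+c(x)\sigma$ whose prefactors themselves depend on $x$ through $A$, $\hat{\vb}$, $c$. I would therefore split $\fF_Q$ additively into three subclasses, bound each, and recombine by subadditivity. In each subclass the coefficient functions are treated as bounded multipliers via their $L^\infty$ norms (collected into $M$), and the powers of $\Abs{\vw_k}$ generated by the quadratic form $\vw_k^{\T} A\vw_k$ and the linear form $\hat{\vb}^{\T}\vw_k$ are absorbed into the path norm consistently with the $\Abs{\vw}^6$ weight in the Barron norm; the quadratic term $\sum_{\alpha,\beta}a_{\alpha\beta}w_\alpha w_\beta$ contributes the $d^2$ factor through its double sum, dominating the $d$ from $\hat{\vb}^{\T}\vw$. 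After contraction against $\sigma''$, $\sigma'$, $\sigma$ (using their uniform bounds, again folded into $M$) each subclass reduces to a linear Rademacher term of order $Q/\sqrt{n_\Omega}$, and summing the three contributions gives $4MQd^2/\sqrt{n_\Omega}$.

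The hard part will be making the $x$-dependence of the prefactors compatible with the contraction step: because $\vw^{\T} A(x)\vw$ varies with $x$, one cannot view an entire $\fF_Q$ element as $\Phi(\vw^{\T}x)$ for a single Lipschitz $\Phi$, so the clean convex-hull-plus-contraction argument must be run separately on each additive piece, with the coefficient functions pulled out as bounded multipliers before contraction is applied. Keeping the exact constants requires careful bookkeeping of the input-domain radius, the Jensen/Khintchine constant, and the uniform bounds on $\sigma,\sigma',\sigma''$, but no new idea beyond the reduction outlined above.
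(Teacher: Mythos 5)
Your plan matches the paper's proof in all essentials: the paper likewise splits $\fF_{Q}$ into the three additive pieces $I_1,I_2,I_3$, factors out $\sum_k\Abs{a_k}\Abs{\vw_k}^3\le Q$ after normalizing $\hat{\vw}_k=\vw_k/\Abs{\vw_k}$ (your convex-hull reduction), handles the $x$-dependent prefactors by folding $A_{\alpha\beta}(x^i)$, $\hat{\vb}(x^i)$, $c(x^i)$ into per-sample Lipschitz functions before applying the contraction lemma, extracts the $d^2$ from the double sum of the quadratic form, and treats the boundary classes by the same reduction with $\Abs{D_{x_{r_\alpha}}\gamma_r}\le\widetilde{M}$ supplying the $(\widetilde{M}+1)$ factor. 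The only cosmetic difference is that the paper derives the constants $\tfrac12$, $\tfrac16$ from the homogeneity relations $\sigma'(z)=\tfrac12 z\sigma''(z)$, $\sigma(z)=\tfrac16 z^2\sigma''(z)$ rather than from generic uniform bounds, but this is bookkeeping, not a change of method.
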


\begin{theorem}[Approximation error estimates]\label{thm::ApproximationError}
    For any $(f,g)\in \fB(\Omega,\partial\Omega)$, there is a two layer neural network parameter $\tilde{\theta}=:\{\tilde{a}_k,\tilde{\vw}_k\}_{k=1}^{m}$ such that
    \begin{equation}
        \fR_{D}(\tilde{\theta})\le \frac{12M+3ld(\widetilde{M}+1)(\widetilde{M}+1)}{m}\norm{(f,g)}_{\fB}^2,
    \end{equation}
    with $\norm{\tilde{\theta}}_{\fP}\le 2\norm{(f,g)}_{\fB}$.
\end{theorem}

\noindent \textbf{Sketch of Proof}: The objective is to establish explicit bounds between the population loss and empirical loss for solutions of elliptic/parabolic PDEs under the \textbf{SSBE} framework.
\begin{itemize}
    \item \textbf{Control of Rademacher Complexity (Lemma~\ref{lem::RademacherComplexity4BVP})}
    
    Lemma~\ref{lem::RademacherComplexity4BVP} (Rademacher complexity estimates) indicates that if $(f,g)$ belongs to Barron pair, then the Rademacher complexity of the hypothesis class $\fF_Q$, ${_{r}\fG_{Q}}$, $D_{x_{\alpha}}{_{r}\fG_Q}$ satisfies
    \begin{equation*}
        \operatorname{Rad}(\cdot)\le Q \operatorname{O}(\frac{1}{\sqrt{n_{\Omega}}},\frac{1}{\sqrt{n_{r}}}).
    \end{equation*}
    This quantifies the complexity of the hypothesis class can be uniformly bounded by $\norm{\theta}_{\fP}$ and data size, which is essential for generalization estimates. 
    \item \textbf{Approximation Error Analysis (Theorem~\ref{thm::ApproximationError} )}

    Theorem~\ref{thm::ApproximationError} indicates that if $(f,g)$ belongs to Barron pair, for fixed network width $m$, there exists a two-layer network parameter $\tilde{\theta}$ such that 
    \begin{equation*}
        \fR_{D}(\tilde{\theta})\le \operatorname{O}(\frac{1}{m}),
    \end{equation*}
    demonstrates that the population loss can be made arbitrarily small by increasing $m$ (network capacity).
    \item \textbf{A Posteriori Bound (Theorem~\ref{thm::APosterierGeneralizationBoundElliptic})}
    \begin{enumerate}
        \item Apply the generalization inequality derived in~\cite{Shwartz2014Understanding}.
        \item Substitute the Rademacher bound from Lemma~\ref{lem::RademacherComplexity4BVP} into the generalization gap. Hence yields
    \end{enumerate}
    \begin{equation*}
         \Abs{\fR_{D}(\theta)-\fR_{S}(\theta)}\le \operatorname{O}(\frac{1}{\sqrt{n_{\Omega}}},\frac{1}{\sqrt{n_{r}}},\norm{\theta}_{\fP}) .
    \end{equation*}
    \item \textbf{A Priori Generalization Bound (Theorem~\ref{thm::APrioriGeneralizationBoundElliptic})}

    Decompose the population loss into approximation error and generalization error, and apply Theorem~\ref{thm::ApproximationError} and Theorem~\ref{thm::APosterierGeneralizationBoundElliptic}, yields
    \begin{equation*}
        \fR_{D}(\theta_S)\le \operatorname{O}(\frac{1}{\sqrt{n_{\Omega}}},\frac{1}{\sqrt{n_r}},\frac{1}{m}, B, \norm{(f,g)}_{\fB}).
    \end{equation*}
\end{itemize}
\section{Numerical Experiments}
\label{sec:example}

In this section, we present our proposed approach through a series of representative numerical examples. Specifically, we examine the Poisson equation, heat equation, nonlinear Poisson equation, and high-dimensional Poisson equation under various network architectures and parameter settings. \textcolor{black}{We further test SSBE on PDEs defined over irregular domains, where it consistently achieves lower relative $L^2$ and $H^1$ errors than standard PINNs, demonstrating its robustness to complex geometries.} The results show that our method achieves substantial improvements in both $L^2$ and $H^1$ convergence. Unless otherwise stated, we employ a standard fully connected neural network (FNN) architecture with layer sizes $[d,100,100,100,1]$ ($d$ is dimension) and the activation function $\tanh(x)$. All models are trained using the Adam optimizer, with the learning rate decaying from $1\times 10^{-3}$ to $1\times 10^{-6}$, and training continues until the loss converges. All experiments are conducted on a single RTX 3070Ti GPU, with identical training configurations for both the PINN and SSBE methods to ensure a fair comparison. Complete implementation details and parameter settings are available at   \url{https://github.com/CChenck/H1Boundary}.

\subsection{Poisson Equation}
Let $\Omega = B(0,1) \subset \mathbb{R}^2$ be the unit ball, $\mathcal{L} = -\Delta$ the Laplacian, and $f(x_1, x_2) = 4$, $g(x_1, x_2) = 0$. The PDE problem is then given by  
\begin{equation}\label{eq::Poi_Eqn}
\left\{
\begin{aligned}
    -\Delta u &= 4, 
    && (x_1, x_2) \in B(0,1), \\
    u &= 0, 
    && (x_1, x_2) \in \partial B(0,1).
\end{aligned}
\right.
\end{equation}

\begin{figure}[htpb!]
    \centering
    \includegraphics[scale=0.4]{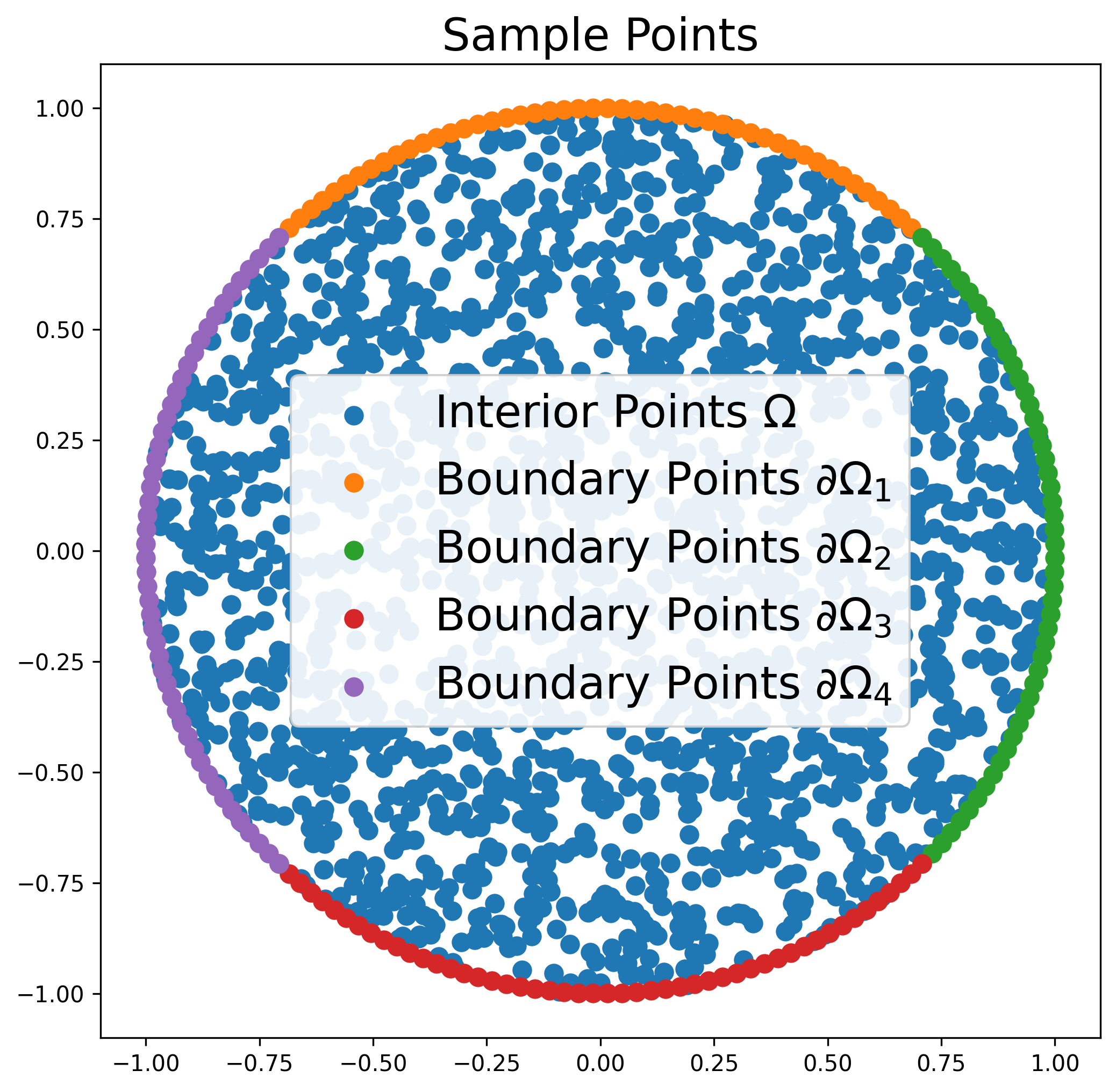}
    \caption{\textbf{Poisson equation:} Randomly sampled training points for the Poisson equation.}
    \label{Fig.Poi_sample}
\end{figure}

This PDE admits the unique ground truth $u(x_1, x_2) = 1 - \left( x_1^2 + x_2^2 \right)$. The randomly sampled training points are illustrated in Figure~\ref{Fig.Poi_sample}, with $2{,}500$ points sampled in the interior of $\Omega$ and $200$ points sampled on the boundary $\partial\Omega$, and the corresponding SSBE boundary loss is explicitly defined in Equation~\eqref{Eqn.Poi_HB}. To mitigate potential singularities when computing the SSBE boundary loss, the sampling points are partitioned into four subsets. To assess performance, we compare the proposed SSBE method with the standard PINN approach under various neural network architectures. The training curves of the relative $L^2$ and relative $H^1$ errors for one representative architecture are shown in Figure~\ref{Fig.Poi_curve}, while the quantitative results in Table~\ref{tab:poi_table} clearly indicate that SSBE consistently yields lower relative $L^2$ and $H^1$ errors than PINN across all tested configurations.
    \begin{equation}
    \begin{aligned}
      L_{HB}  =& \int_{\partial \Omega_1
        }\left(\frac{\diff}{\diff{x_1}}u_\theta(x_1,\sqrt{1-x_1^2})-\frac{\diff}{\diff{ x_1}}g(x_1,\sqrt{1-    x_1^2})\right)^2\sqrt{1+\left(\frac{\diff}{\diff{x_1}}\sqrt{1-x_1^2}\right)^2}\diff{x_1} \\
        +& \int_{\partial \Omega_2
        }\left(\frac{\diff}{\diff{x_2}}u_\theta(\sqrt{1-x_2^2},x_2)-\frac{\diff}{\diff{x_2}}g(\sqrt{1-x_2^2},x_2)\right)^2\sqrt{1+\left(\frac{\diff}{\diff{x_2}}\sqrt{1-x_2^2}\right)^2}\diff{x_2}  \\
        +&         \int_{\partial \Omega_3
        }\left(\frac{\diff}{\diff{x_1}}u_\theta(x_1,-\sqrt{1-x_1^2})-\frac{\diff}{\diff{x_1}}g(x_1,-\sqrt{1-    x_1^2})\right)^2\sqrt{1+\left(\frac{\diff}{\diff{x_1}}(-\sqrt{1-x_1^2})\right)^2}\diff{x_1} \\
      +& \int_{\partial \Omega_4
        }\left(\frac{\diff}{\diff{x_2}}u_\theta(-\sqrt{1-x_2^2},x_2)-\frac{\diff}{\diff{x_2}}g(-\sqrt{1-x_2^2},x_2)\right)^2\sqrt{1+\left(\frac{\diff}{\diff{x_2}}(-\sqrt{1-x_2^2})\right)^2}\diff{x_2}.
    \end{aligned}
    \label{Eqn.Poi_HB}
    \end{equation}

\begin{table}[h]
    \centering
    \begin{tabular}{|c|c|c|c|c|}
        \hline 
        Architecture & \multicolumn{2}{c|}{Orginal PINN} & \multicolumn{2}{c|}{SSBE} \\ 
        \hline 
        & Re $L^2$ & Re $H^1$ & Re $L^2$ & Re $H^1$ \\ 
        \hline 
        30 units / 3 hidden layers & $1.08\mathrm{e\text{-}04}$ & $4.05\mathrm{e\text{-}04}$ & $3.53\mathrm{e\text{-}06}$ & $1.81\mathrm{e\text{-}05}$ \\ 
        \hline 
        50 units / 3 hidden layers & $2.47\mathrm{e\text{-}05}$ & $8.75\mathrm{e\text{-}05}$ & $3.18\mathrm{e\text{-}06}$ & $1.09\mathrm{e\text{-}05}$ \\ 
        \hline 
        100 units / 3 hidden layers & $2.21\mathrm{e\text{-}05}$ & $8.19\mathrm{e\text{-}05}$ & $\boldsymbol{2.15\mathrm{e\text{-}06}}$ & $\boldsymbol{1.14\mathrm{e\text{-}05}}$ \\ 
        \hline 
        30 units / 5 hidden layers & $2.36\mathrm{e\text{-}04}$ & $7.56\mathrm{e\text{-}04}$ & $2.53\mathrm{e\text{-}05}$ & $9.92\mathrm{e\text{-}05}$ \\ 
        \hline 
        50 units / 5 hidden layers & $2.09\mathrm{e\text{-}04}$ & $6.73\mathrm{e\text{-}04}$ & $6.26\mathrm{e\text{-}06}$ & $2.53\mathrm{e\text{-}05}$ \\ 
        \hline 
        100 units / 5 hidden layers & $7.66\mathrm{e\text{-}05}$ & $2.78\mathrm{e\text{-}04}$ & $5.05\mathrm{e\text{-}06}$ & $2.33\mathrm{e\text{-}05}$ \\ 
        \hline 
        30 units / 7 hidden layers & $2.54\mathrm{e\text{-}04}$ & $8.16\mathrm{e\text{-}04}$ & $8.00\mathrm{e\text{-}06}$ & $2.99\mathrm{e\text{-}05}$ \\ 
        \hline 
        50 units / 7 hidden layers & $1.22\mathrm{e\text{-}04}$ & $3.98\mathrm{e\text{-}04}$ & $9.71\mathrm{e\text{-}06}$ & $3.87\mathrm{e\text{-}05}$ \\ 
        \hline 
        100 units / 7 hidden layers & $1.18\mathrm{e\text{-}04}$ & $3.47\mathrm{e\text{-}04}$ & $7.94\mathrm{e\text{-}06}$ & $3.33\mathrm{e\text{-}05}$ \\ 
        \hline 
    \end{tabular}
    \caption{\textbf{Poisson equation:} Relative $L^2$ and $H^1$ errors between the predicted and exact solutions $u(x_1,x_2)$ for the original PINN and the proposed SSBE method, evaluated across different neural network architectures obtained by varying the number of hidden layers and the number of neurons per layer.
}
    \label{tab:poi_table}
\end{table}

\begin{figure}[htpb]
    \centering
    \includegraphics[scale=0.35]{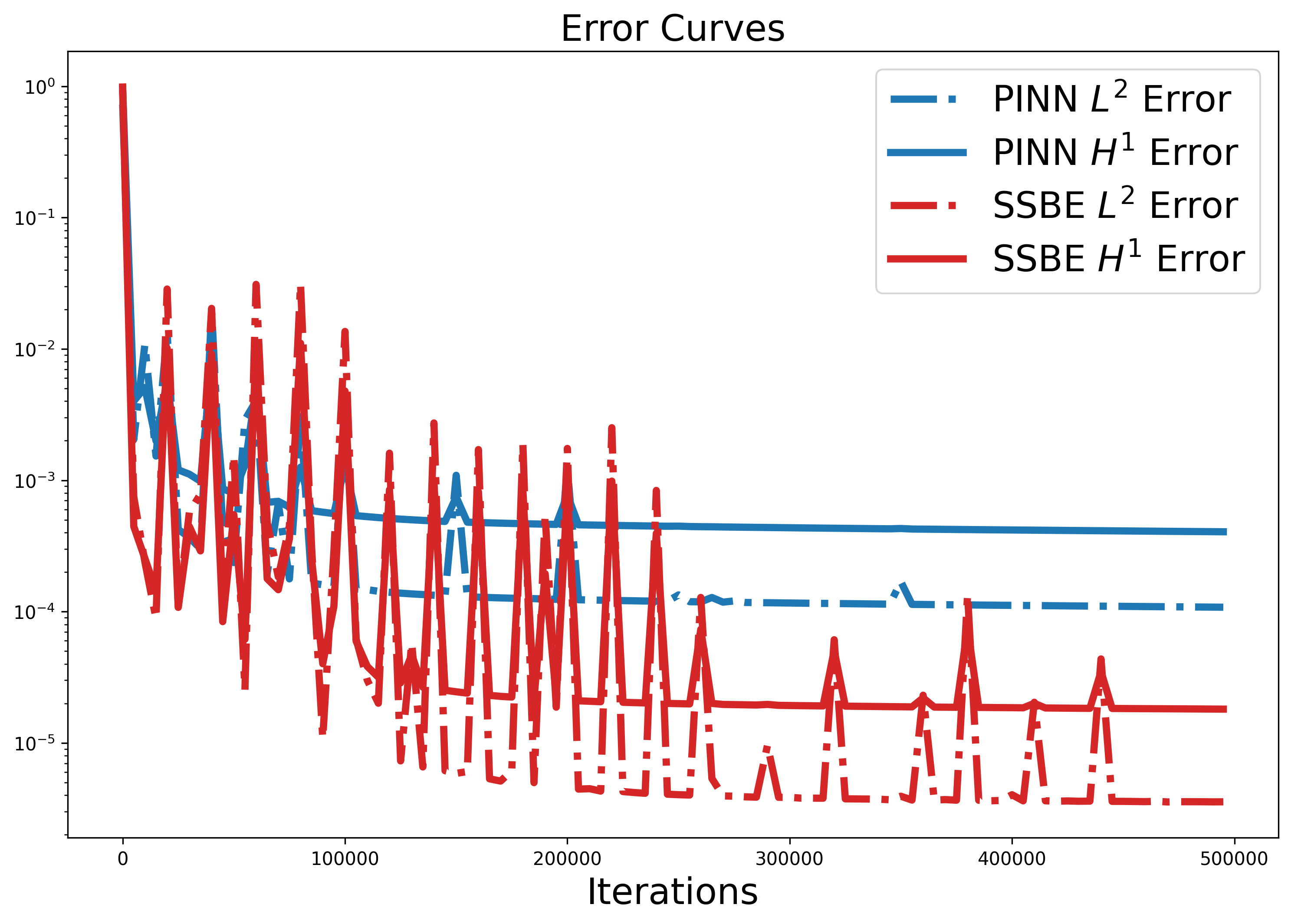}
    \caption{\textbf{Poisson equation:} Error curves comparing PINN and SSBE.
}
    \label{Fig.Poi_curve}
\end{figure}

\subsection{Heat Equation}
Let $\Omega = [-1,1] \times [-1,1] \subset \mathbb{R}^2$ be the square domain, $\mathcal{L} = -\Delta$ the Laplacian, and $f(x_1, x_2) = g(x_1, x_2) = 0$. The PDE problem is given by  
\begin{equation}\label{Poi_eqn}
\left\{
\begin{aligned}
    \frac{\partial u}{\partial t} - \Delta u &= 0, 
    && (x_1, x_2) \in \Omega, \quad t \in [0,1], \\
    u(x_1, x_2, t) &= 0, 
    && (x_1, x_2) \in \partial \Omega, \quad t \in [0,1], \\
    u(x_1, x_2, 0) &= \sin(\pi x_1) \sin(\pi x_2), 
    && (x_1, x_2) \in \Omega.
\end{aligned}
\right.
\end{equation}
This PDE admits the unique ground truth $u(x_1, x_2, t) = \sin(\pi x_1)\sin(\pi x_2)e^{-2\pi^2 t}$. In this experiment, the loss function is defined as  
\begin{equation}
    \text{Loss} = \lambda_1 L_{\text{res}} + \lambda_2 L_{\text{ini}} + \lambda_3 L_{LB} + \lambda_4 L_{HB},
\end{equation}
where $L_{\text{res}}$ denotes the PDE residual loss, $L_{\text{ini}}$ the initial condition loss, $L_{LB}$ the $L^2$-based boundary loss, and $L_{HB}$ the proposed SSBE boundary loss defined in Equation~\eqref{Eqn.Heat_HB}. The sampling points are illustrated in Figure~\ref{Fig.Heat_sample}. The sampling strategy is as follows: $5{,}000$ points are randomly sampled in the interior of $\Omega$, $2{,}000$ points are randomly sampled on the boundary $\partial\Omega$, and an additional $2{,}000$ points are used for the initial condition. Numerical experiments are conducted under various choices of the parameters $\lambda_i$, with the corresponding results summarized in Table~\ref{tab:heat_eqn}. Across all tested settings, SSBE consistently yields lower relative $L^2$ and $H^1$ errors than the original PINN, indicating improved accuracy and enhanced convergence. Figure~\ref{Fig.Heat_com} presents the exact solution, the PINN solution, and the SSBE solution at $t = 0$, $0.1$, and $0.3$ for a representative setting, while Figure~\ref{Fig.Heat_error} shows the corresponding pointwise errors, where SSBE exhibits uniformly lower errors across all time steps.

\begin{figure}[htpb]
    \centering
    \includegraphics[scale=0.4]{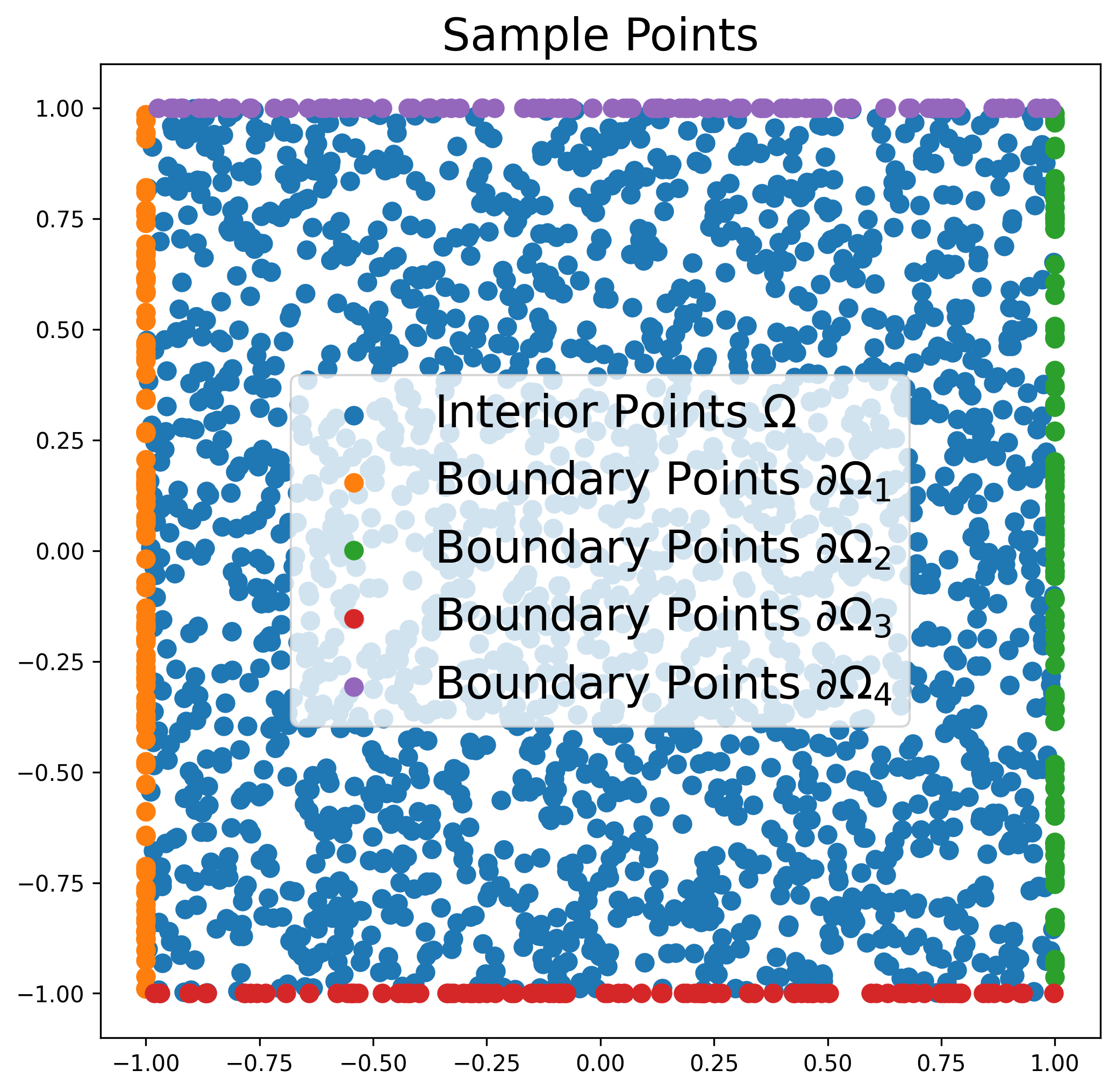}
    \caption{\textbf{Heat equation:} Randomly sampled training points for the heat equation.}
    \label{Fig.Heat_sample}
\end{figure}

    \begin{equation}
    \begin{aligned}
      L_{HB}  =& \int_{\partial \Omega_1
        }\left(\frac{\diff}{\diff{x_1}}u_\theta(-1,x_2)-\frac{\diff}{\diff{ x_1}}g(-1,x_2)\right)^2\diff{x_2}
        + \int_{\partial \Omega_2
        }\left(\frac{\diff}{\diff{x_1}}u_\theta(1,x_2)-\frac{\diff}{\diff{x_1}}g(1,x_2)\right)^2\diff{x_2}  \\
        +&         \int_{\partial \Omega_3
        }\left(\frac{\diff}{\diff{x_2}}u_\theta(x_1,-1)-\frac{\diff}{\diff{x_2}}g(x_1,-1)\right)^2\diff{x_1} 
      + \int_{\partial \Omega_4
        }\left(\frac{\diff}{\diff{x_2}}u_\theta(x_1,1)-\frac{\diff}{\diff{x_2}}g(x_1,1)\right)^2\diff{x_1}.
    \end{aligned}
    \label{Eqn.Heat_HB}
    \end{equation}

\begin{table}[htpb]
    \centering
    \begin{tabular}{|c|c|c|c|c|c|c|c|}
        \hline 
        \multicolumn{4}{|c|}{Parameters} & \multicolumn{2}{c|}{Original PINN} & \multicolumn{2}{c|}{SSBE} \\ 
        \hline
        $\lambda_1$ & $\lambda_2$ & $\lambda_3$ & $\lambda_4$ & Re $L^2$ & Re $H^1$ & Re $L^2$ & Re $H^1$ \\ 
        \hline 
        1 & 1 & 1 & 1 & $5.98 \mathrm{e\text{-}03}$ & $1.50 \mathrm{e\text{-}02}$ & $1.98 \mathrm{e\text{-}03}$ & $6.80 \mathrm{e\text{-}03}$ \\ 
        \hline 
        1 & 1 & 10 & 1 & $4.59 \mathrm{e\text{-}03}$ & $1.50 \mathrm{e\text{-}02}$ & $1.98 \mathrm{e\text{-}03}$ & $6.80 \mathrm{e\text{-}03}$ \\ 
        \hline 
        1 & 1 & 0.1 & 1 & $2.31\mathrm{e\text{-}02}$ & $2.42\mathrm{e\text{-}02}$ & $2.23\mathrm{e\text{-}03}$ & $5.10\mathrm{e\text{-}03}$ \\ 
        \hline 
        1 & 10 & 10 & 1 & $3.81\mathrm{e\text{-}03}$ & $9.70\mathrm{e\text{-}03}$ & $2.71\mathrm{e\text{-}03}$ & $6.50\mathrm{e\text{-}03}$ \\ 
        \hline 
        1 & 0.1 & 0.1 & 1 & $3.68 \mathrm{e\text{-}02}$ & $4.96 \mathrm{e\text{-}02}$ & $3.23 \mathrm{e\text{-}03}$ & $5.50 \mathrm{e\text{-}03}$ \\ 
        \hline
        1 & 10 & 10 & 0.1 & - & - & $1.44\mathrm{e\text{-}03}$ & $5.00\mathrm{e\text{-}03}$ \\ 
        \hline 
        1 & 10 & 10 & 0.01 & - & - & $\boldsymbol{1.10\mathrm{e\text{-}03}}$ & $\boldsymbol{4.40\mathrm{e\text{-}03}}$ \\ 
        \hline
        1 & 10 & 10 & 0.001 & - & - & $2.23\mathrm{e\text{-}03}$ & $5.70\mathrm{e\text{-}03}$ \\ 
        \hline
    \end{tabular}
    \caption{\textbf{Heat equation:} Comparison between the original PINN ($\lambda_4 = 0$) and the proposed SSBE method under various $\lambda_i$ parameter settings. Test points are taken at 20 uniformly spaced times over $t \in [0,1]$, with $50\times 50$ uniformly sampled spatial points in $(x,y)$ at each time instance.
}
    \label{tab:heat_eqn}
\end{table}

\begin{figure}[!ht]
    \centering
    \setcounter {subfigure} 0(a.1){
    \includegraphics[scale=0.28]{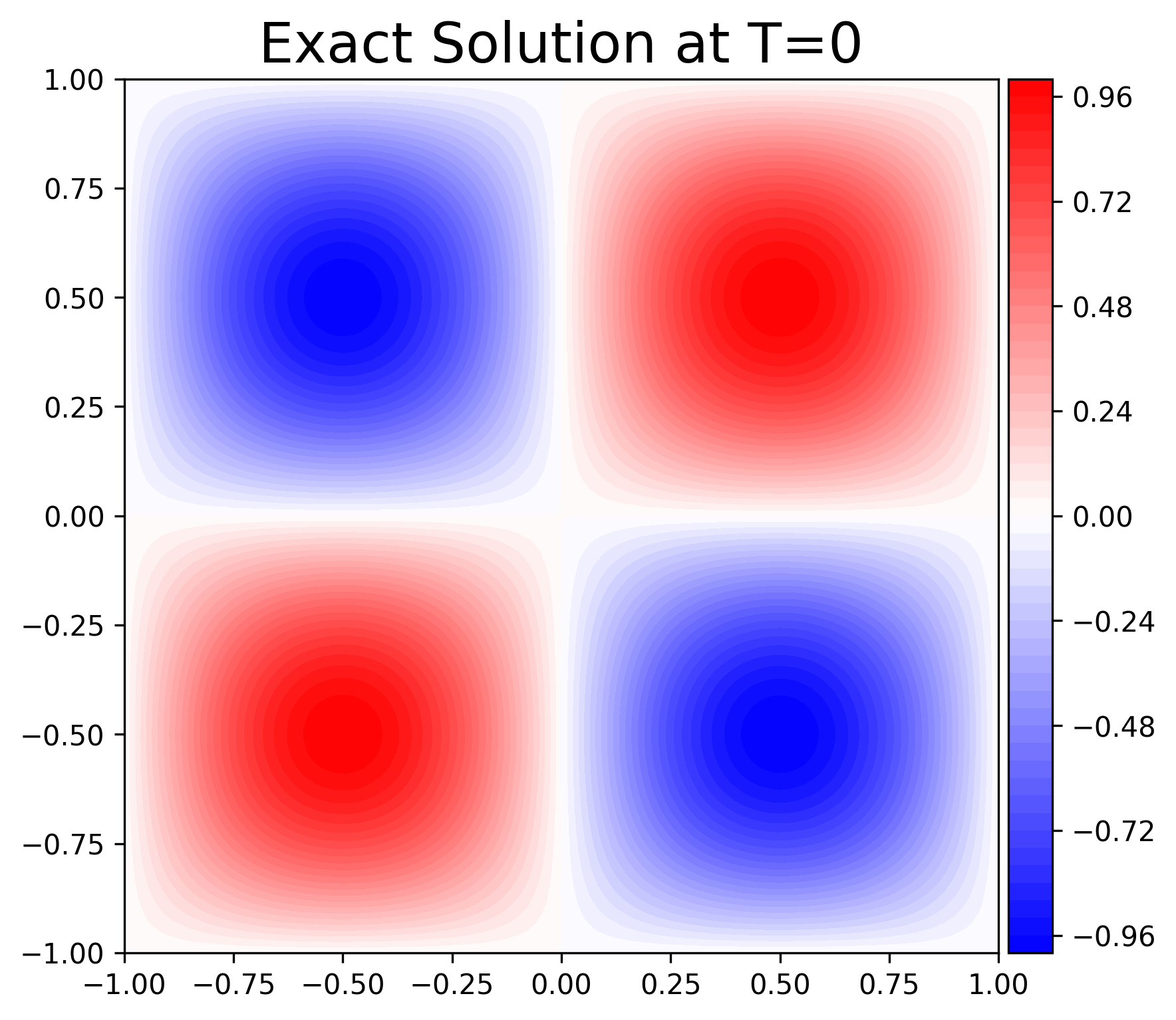}}
    \setcounter {subfigure} 0(a.2){
    \includegraphics[scale=0.28]{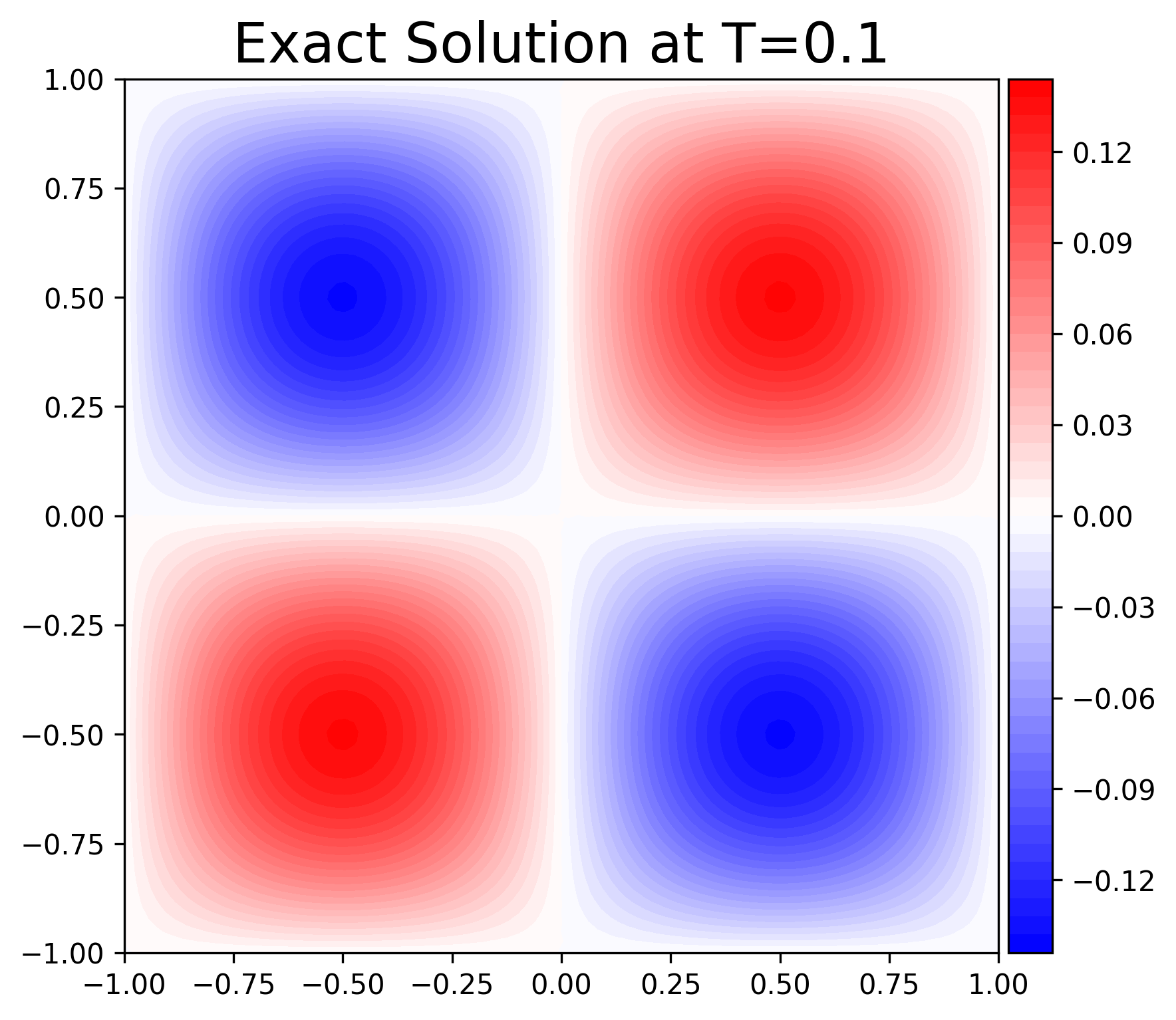}}
    \setcounter {subfigure} 0(a.3){
    \includegraphics[scale=0.28]{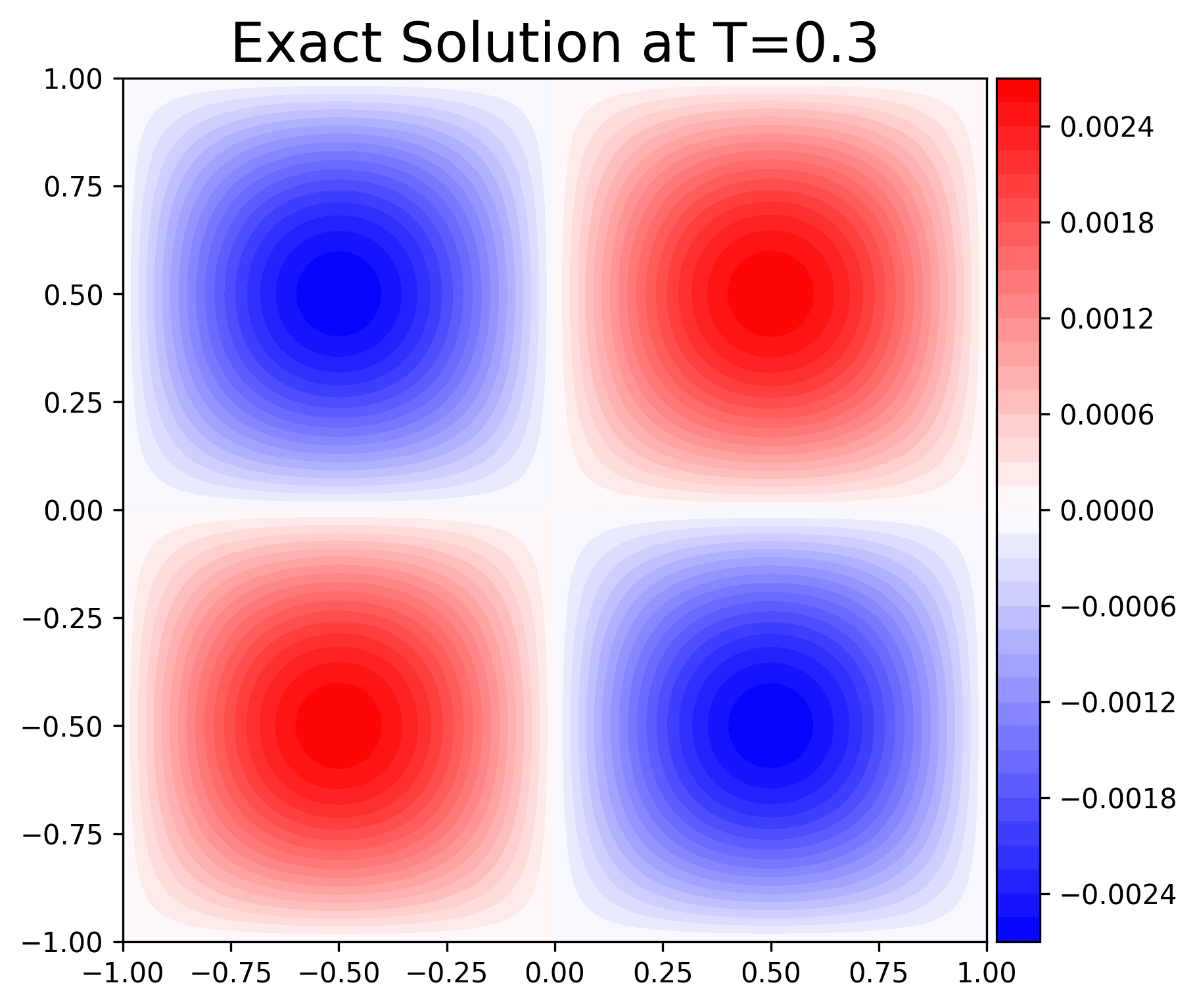}}\\
    \setcounter {subfigure} 0(b.1){
    \includegraphics[scale=0.28]{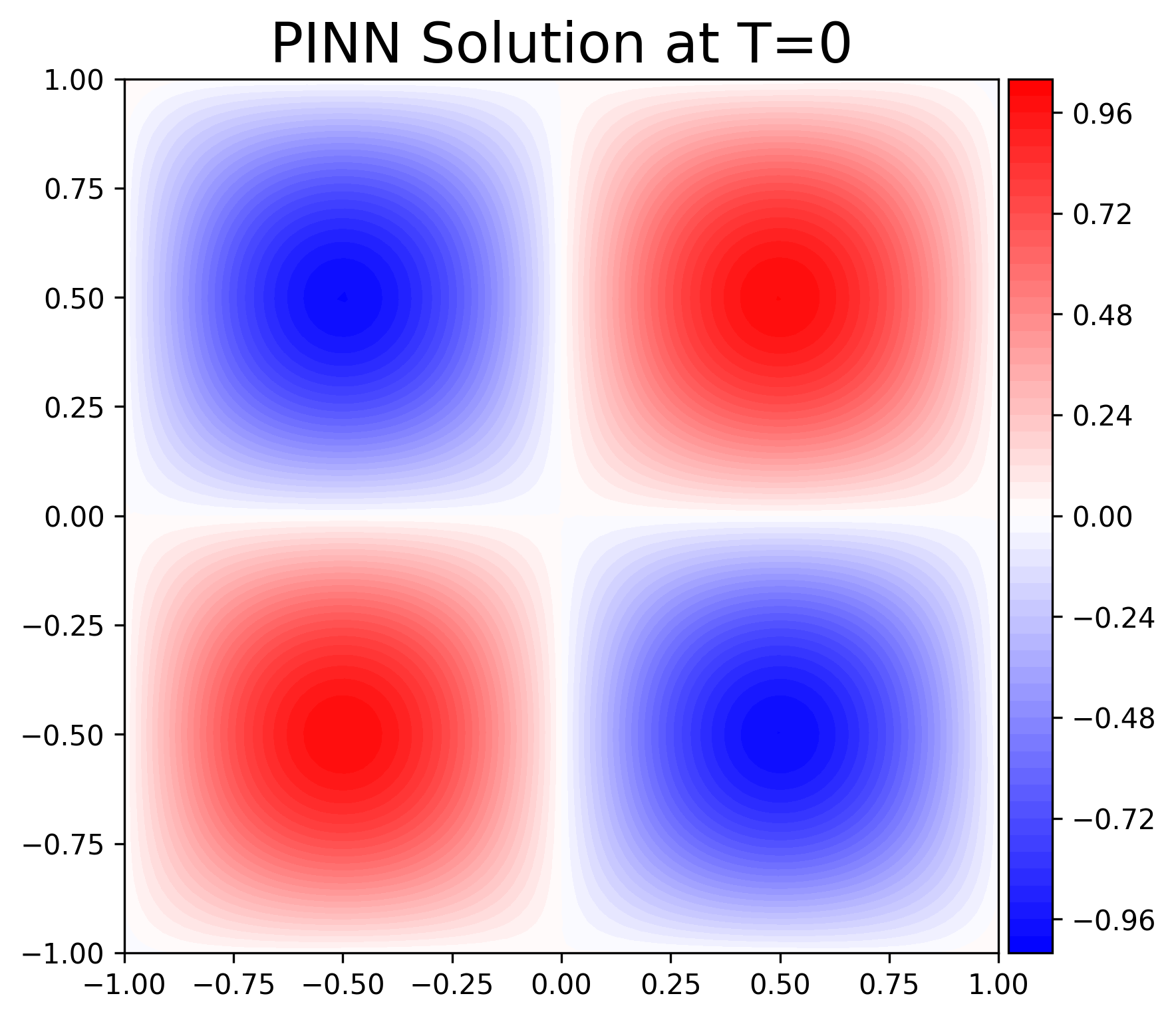}}
    \setcounter {subfigure} 0(b.2){
    \includegraphics[scale=0.28]{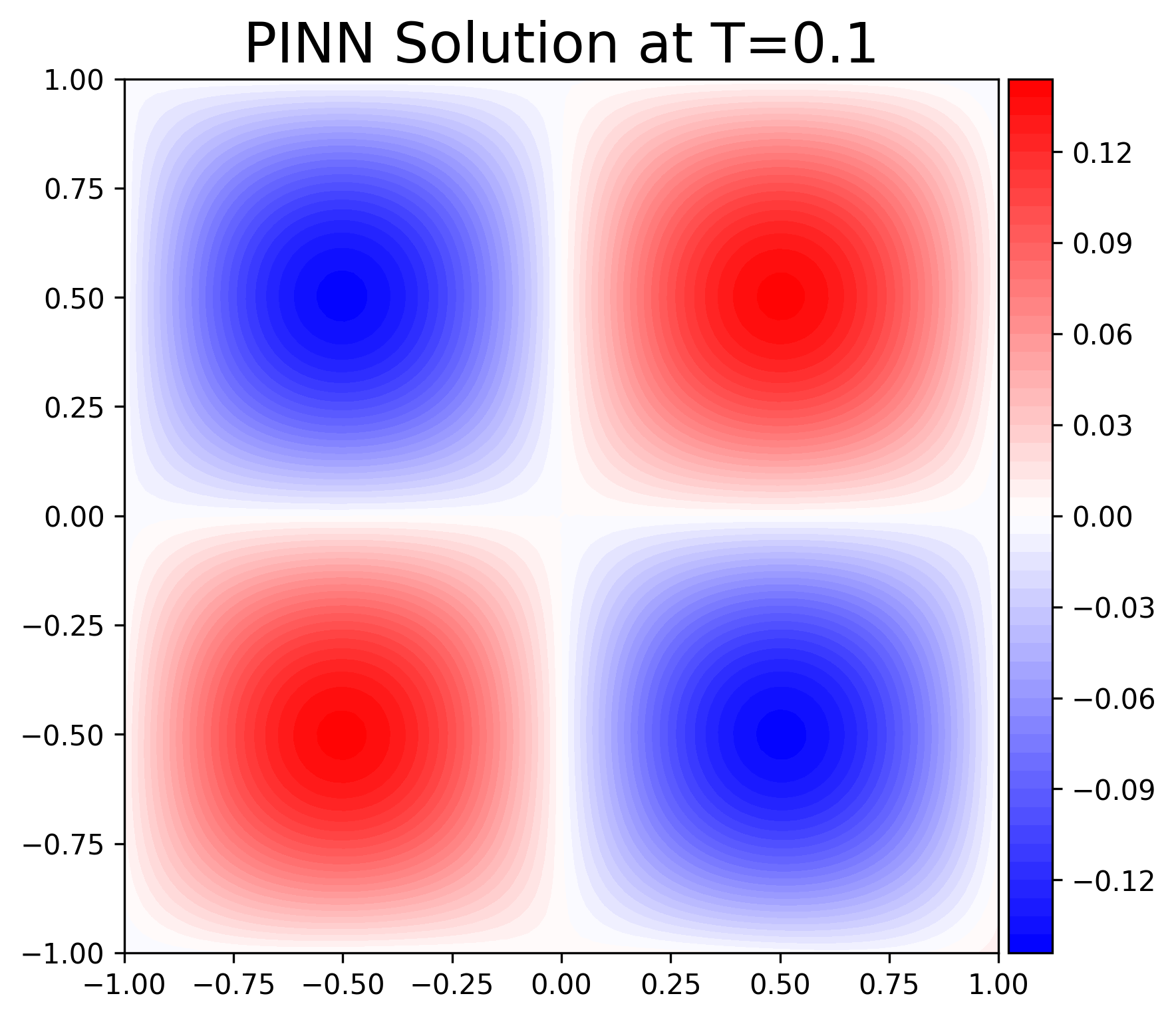}}
    \setcounter {subfigure} 0(b.3){
    \includegraphics[scale=0.28]{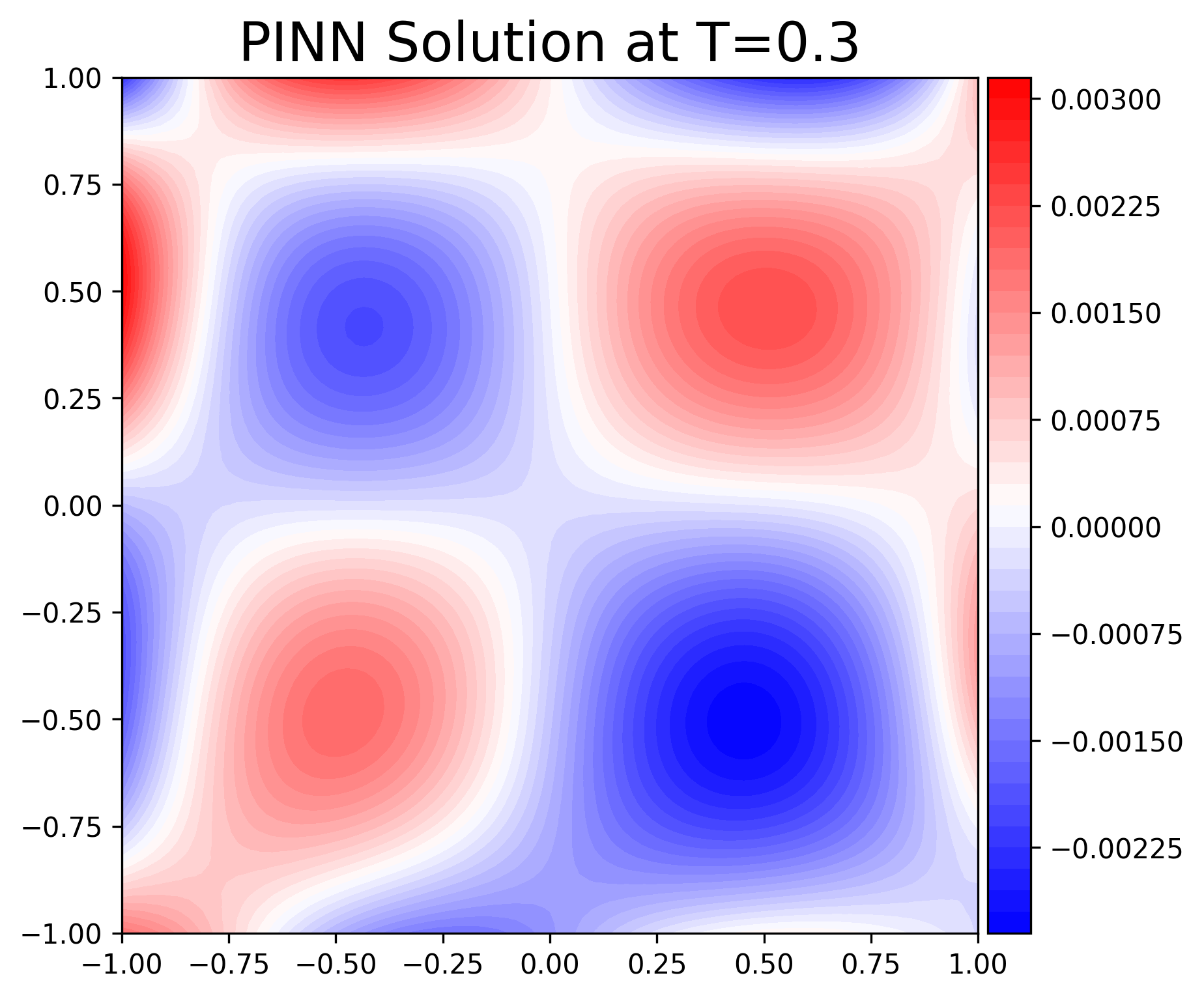}} \\
        \setcounter {subfigure} 0(c.1){
    \includegraphics[scale=0.28]{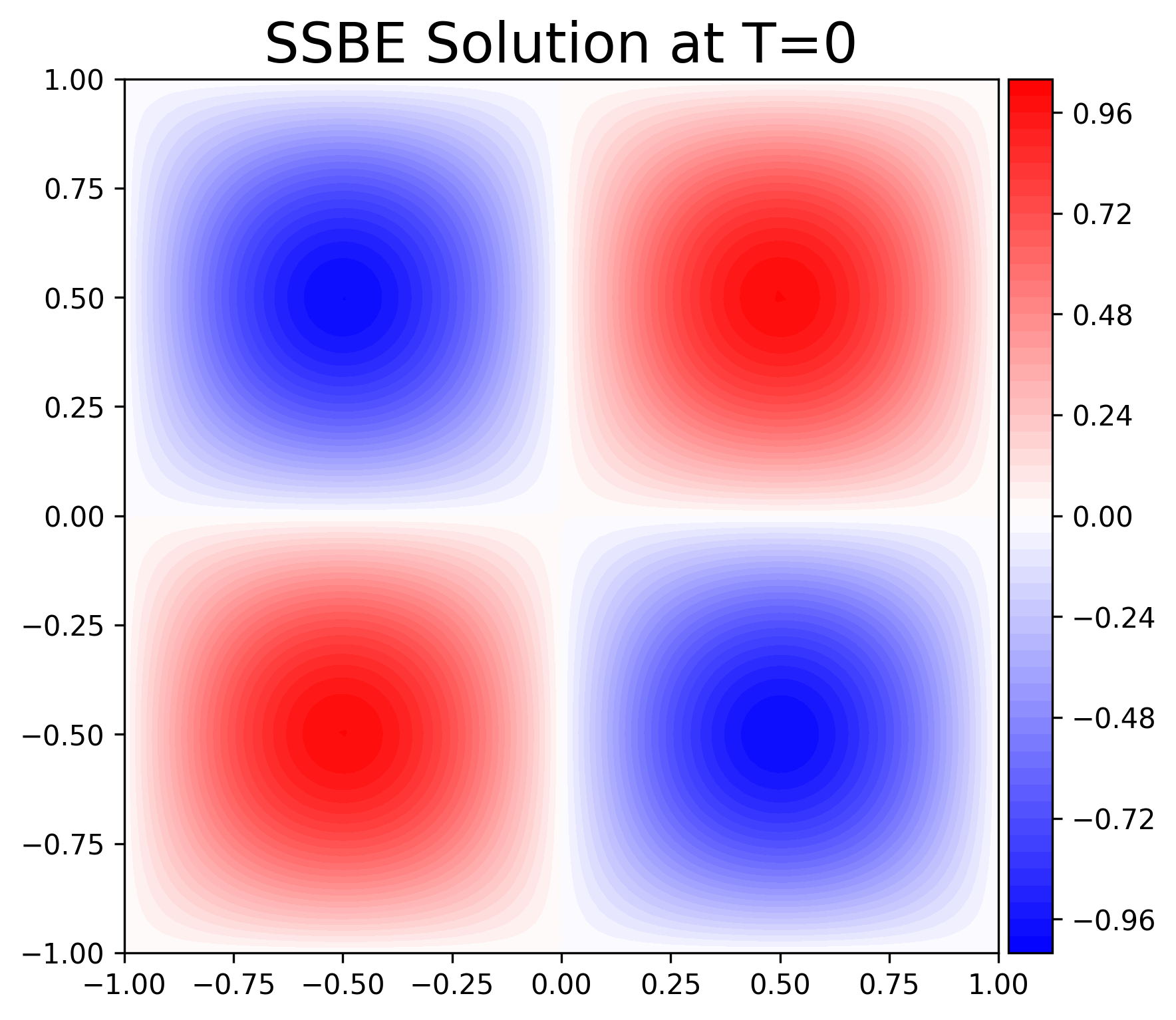}}
    \setcounter {subfigure} 0(c.2){
    \includegraphics[scale=0.28]{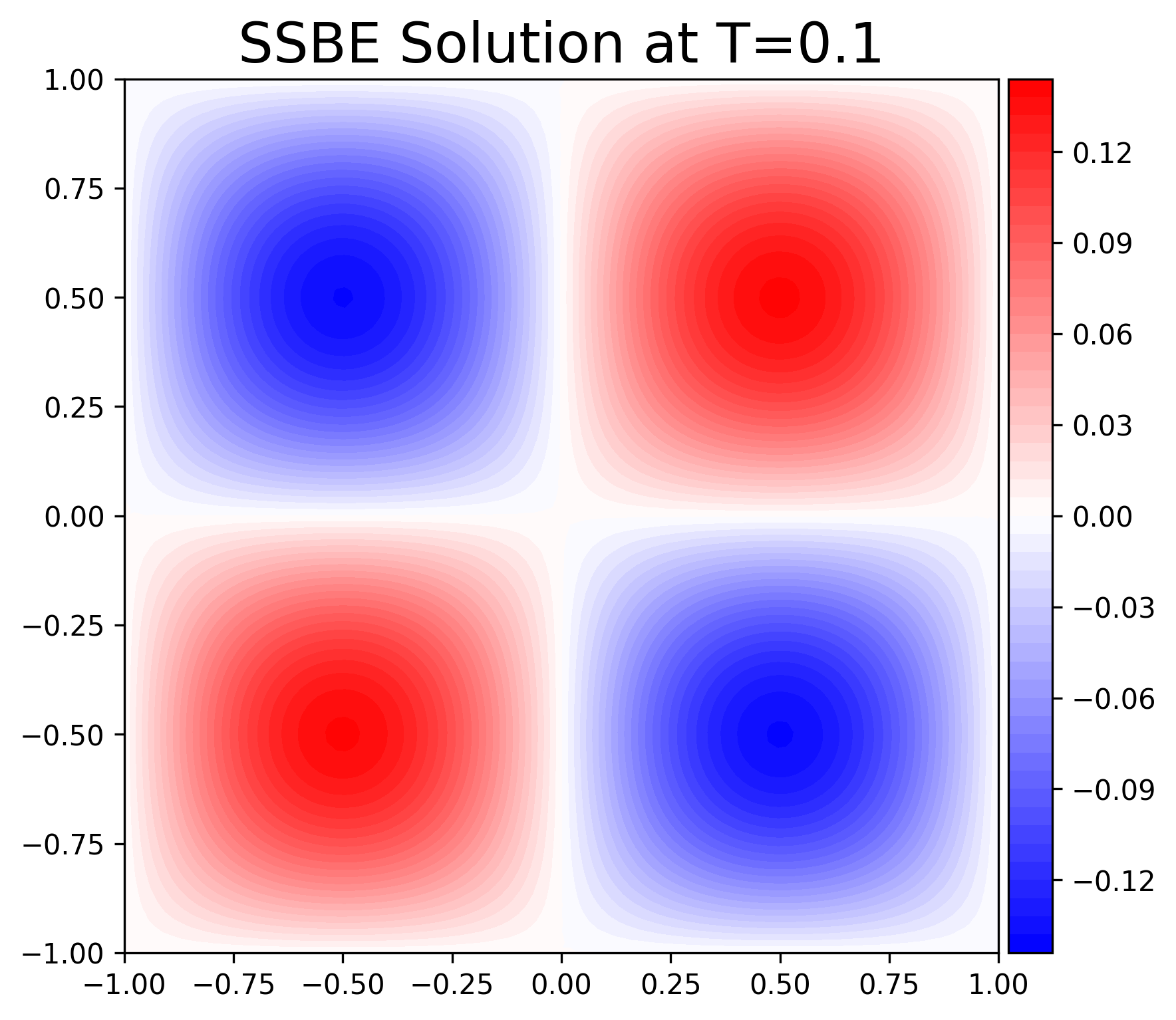}}
    \setcounter {subfigure} 0(c.3){
    \includegraphics[scale=0.28]{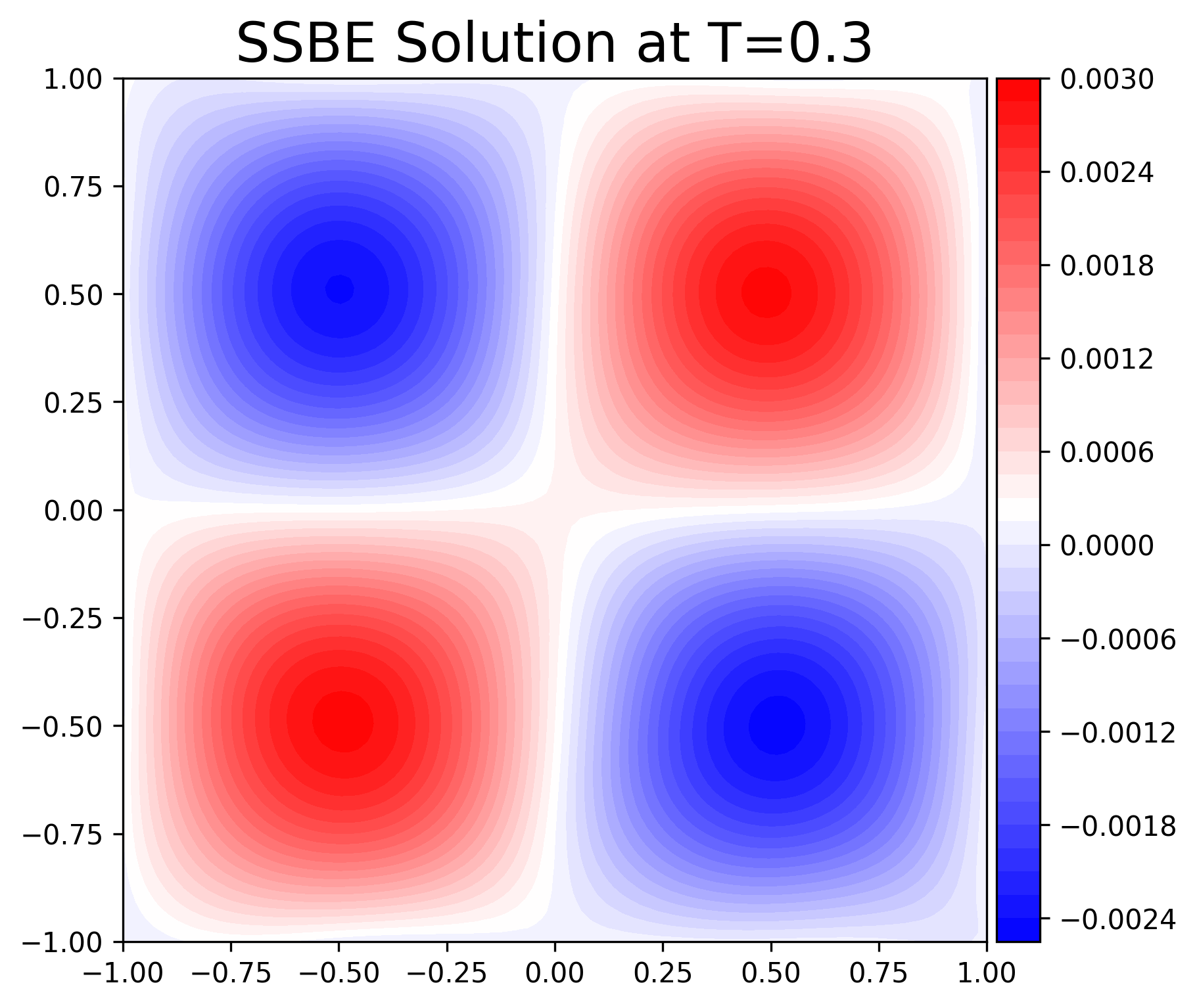}} 
    
    \caption{\textbf{Heat equation:} First row: exact solution; second row: prediction of the PINN model; third row: prediction of the SSBE model. Results are shown for the setting $\lambda_1 = \lambda_2 = \lambda_3 = \lambda_4 = 1$.
}
    \label{Fig.Heat_com}
\end{figure}

\begin{figure}[!ht]
    \centering
    \setcounter {subfigure} 0(a.1){
    \includegraphics[scale=0.27]{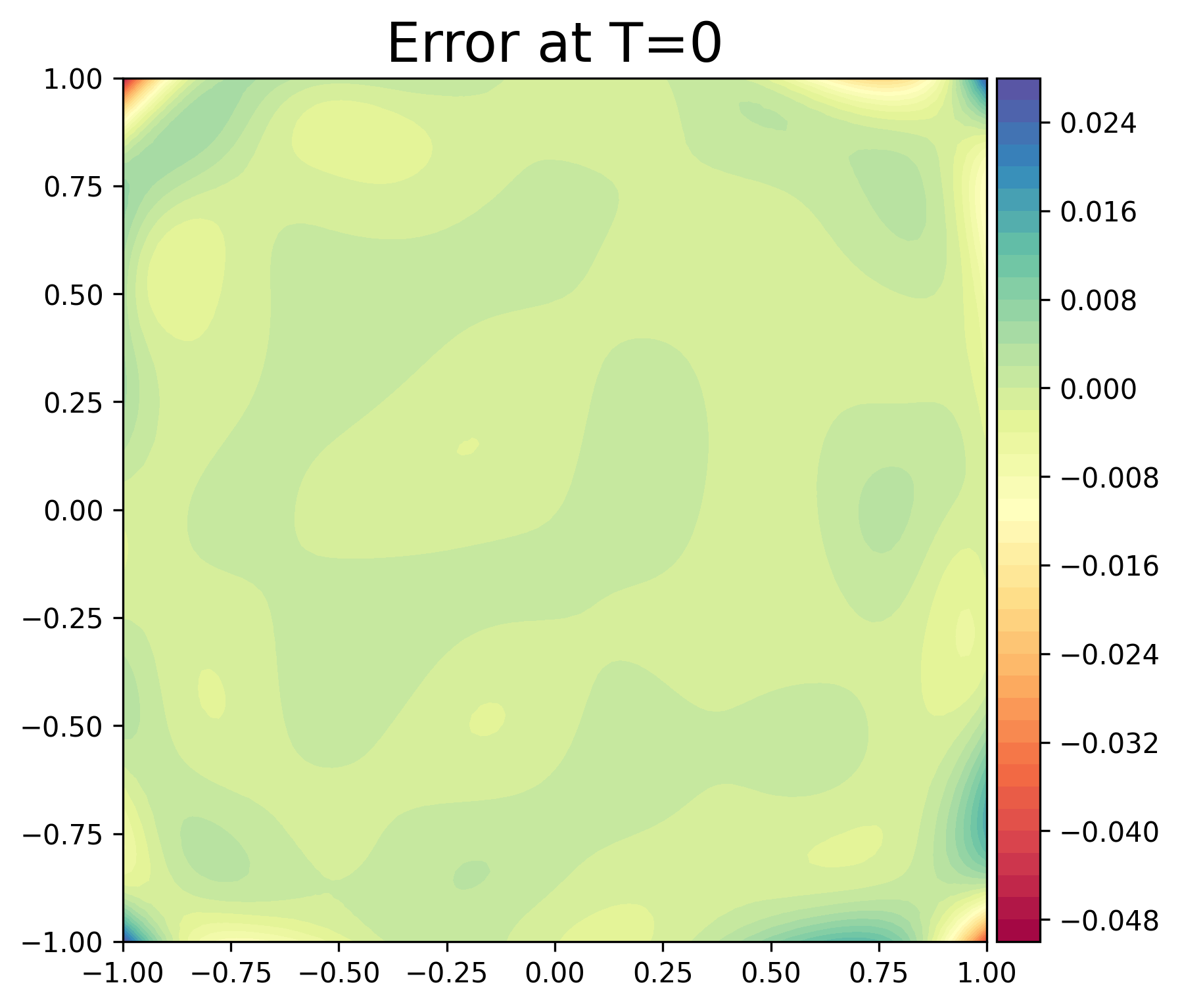}}
    \setcounter {subfigure} 0(a.2){
    \includegraphics[scale=0.27]{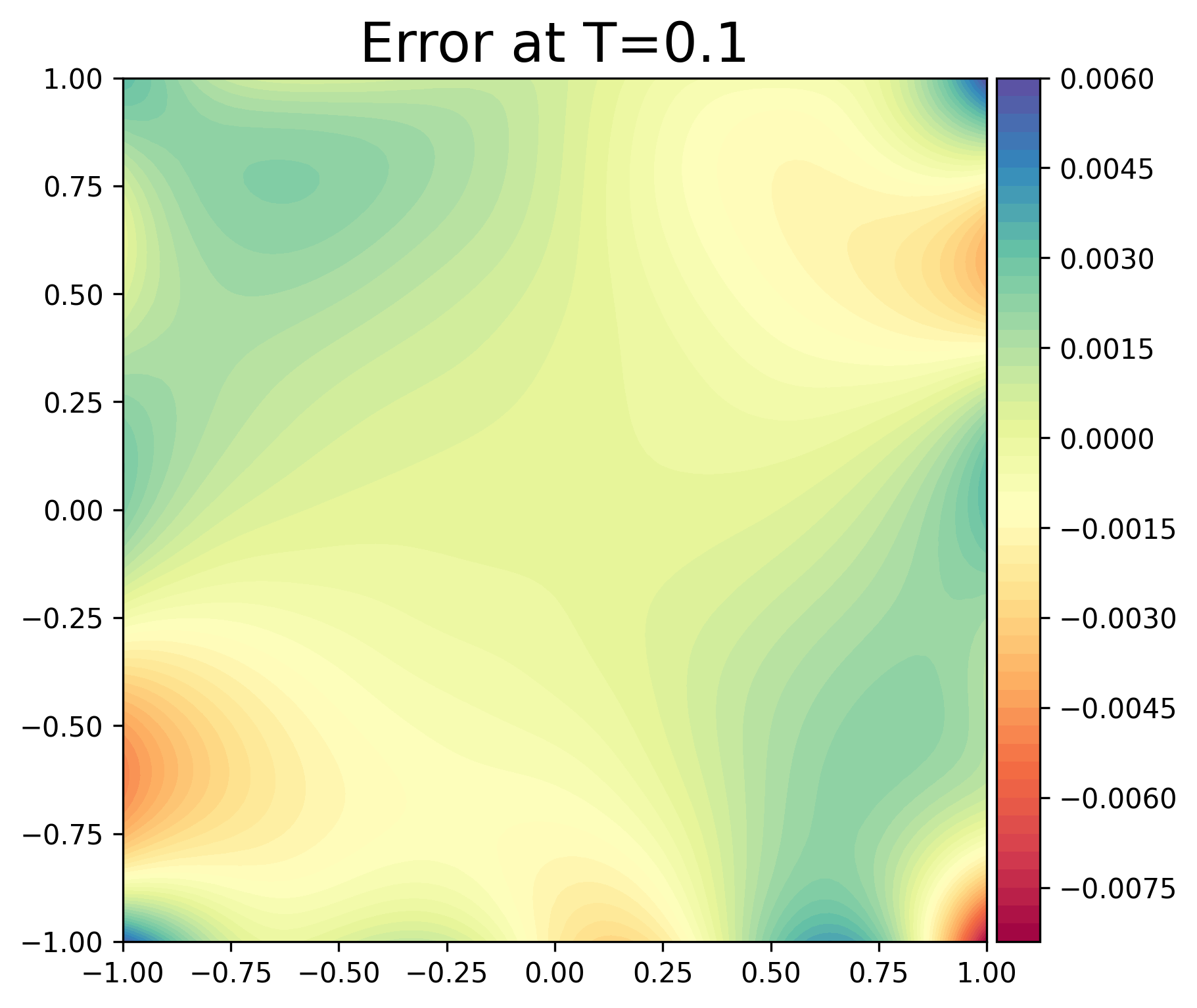}}
    \setcounter {subfigure} 0(a.3){
    \includegraphics[scale=0.27]{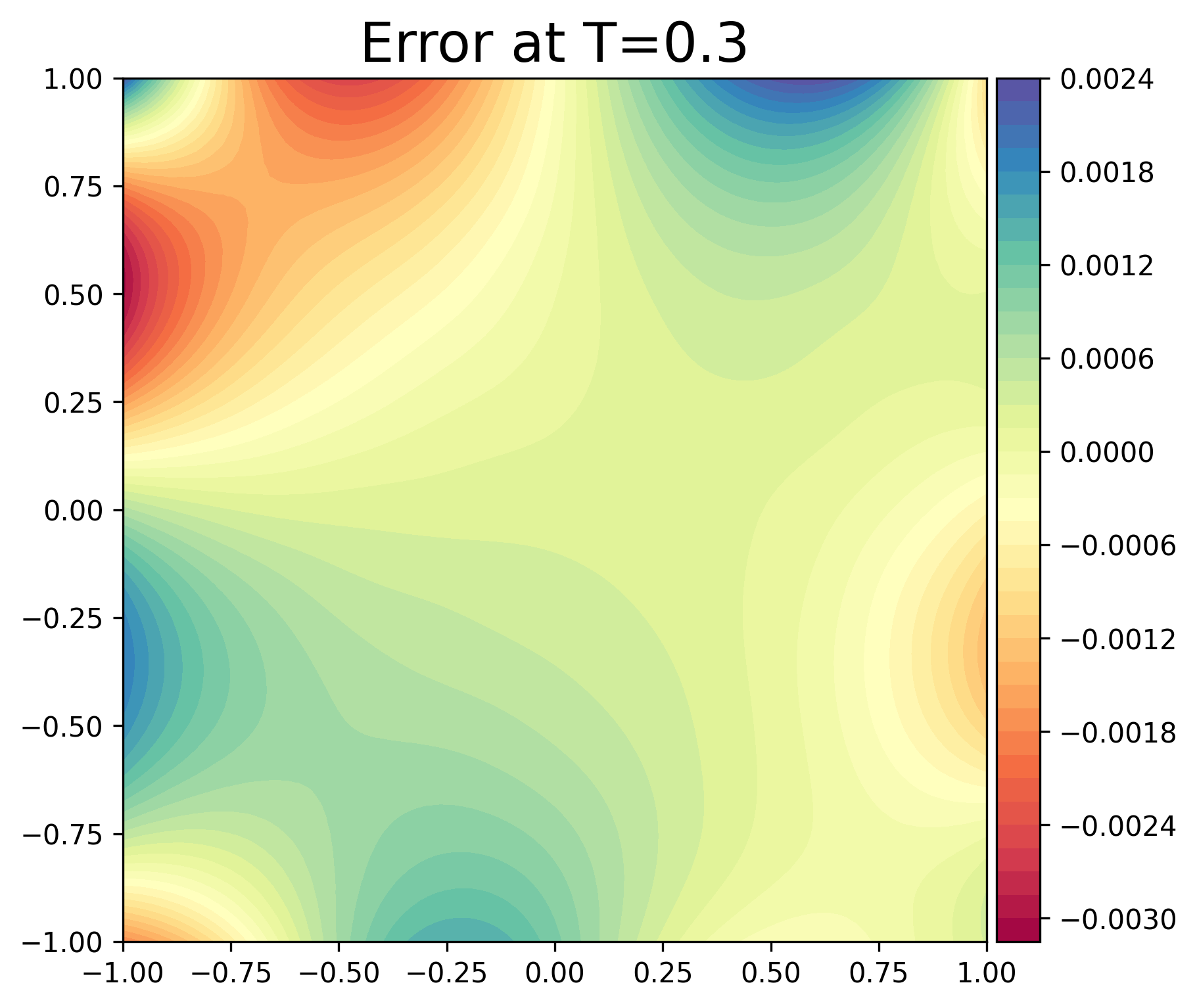}}\\
    \setcounter {subfigure} 0(b.1){
    \includegraphics[scale=0.27]{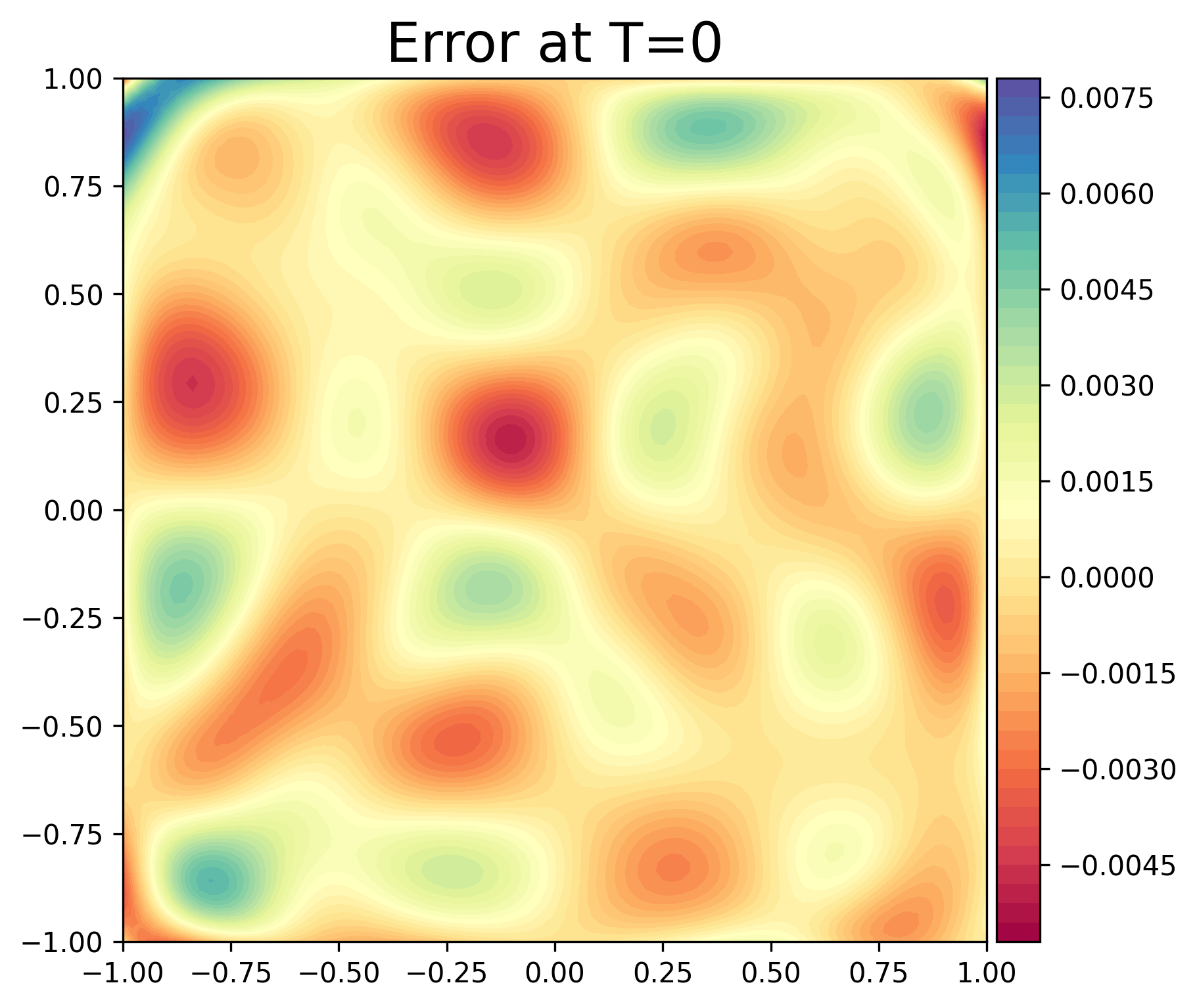}}
    \setcounter {subfigure} 0(b.2){
    \includegraphics[scale=0.27]{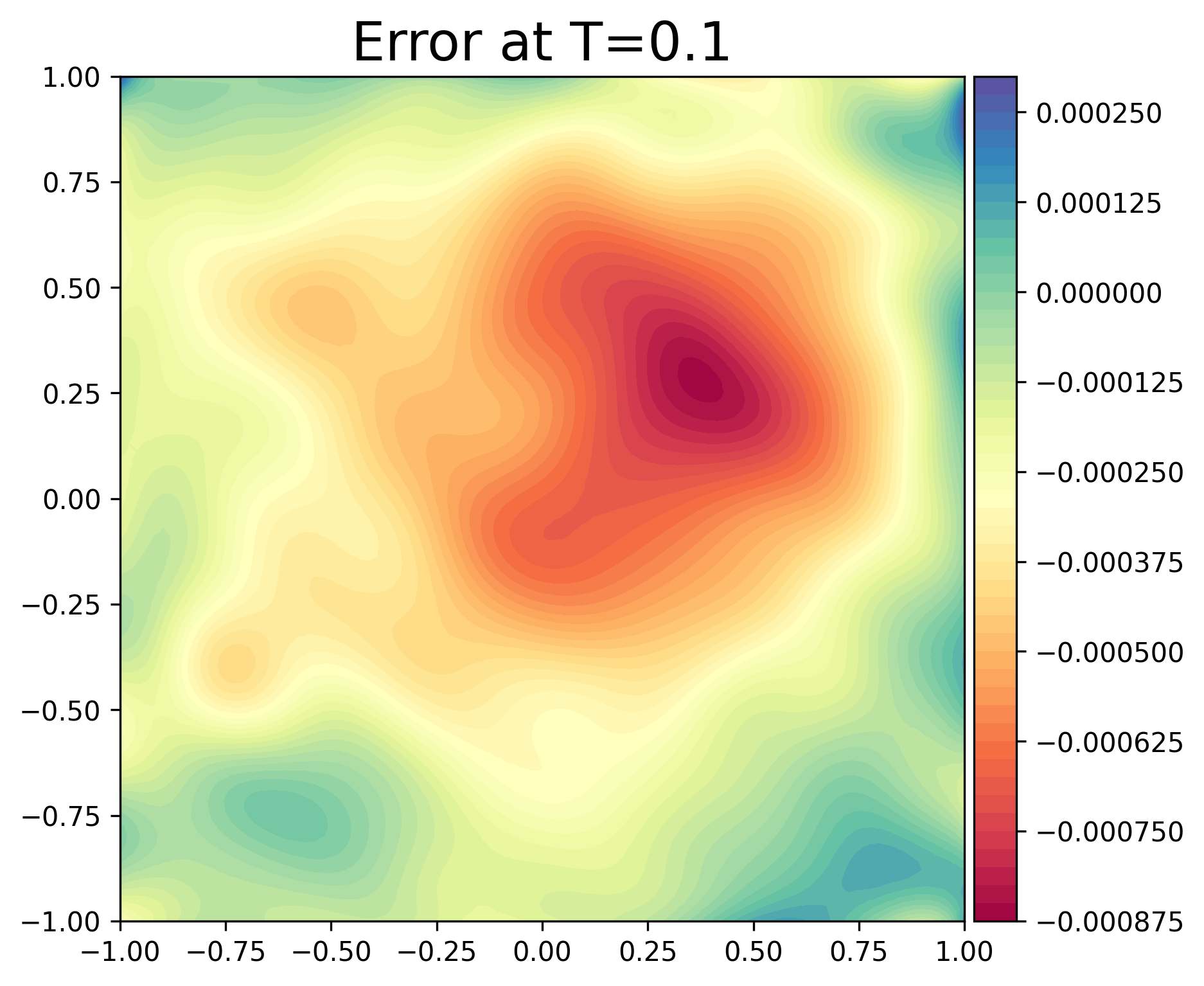}}
    \setcounter {subfigure} 0(b.3){
    \includegraphics[scale=0.27]{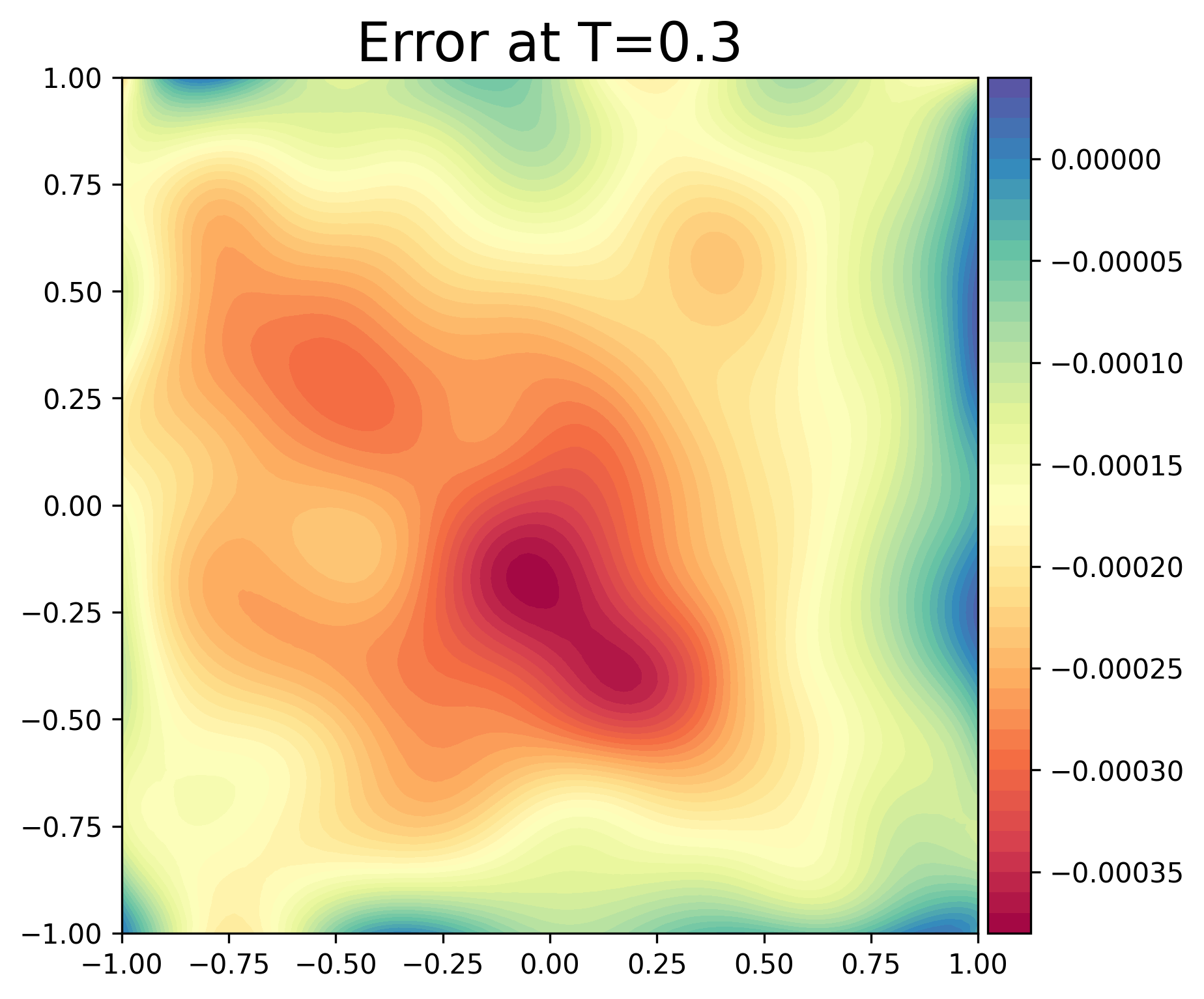}} 
    
    \caption{\textbf{Heat equation:} Pointwise error. First row: PINN model; second row: SSBE model, under the parameter setting $\lambda_1 = \lambda_2 = \lambda_3 = \lambda_4 = 1$.
}
    \label{Fig.Heat_error}
\end{figure}

\newpage
\subsection{Nonlinear Elliptic Equation}
In this example, we consider a nonlinear PDE to demonstrate that the proposed SSBE method is effective not only for linear problems but also for nonlinear settings. Let $\Omega = [-1,1] \times [-1,1] \subset \mathbb{R}^2$, $\mathcal{L} = -\Delta$ be the Laplacian, and $f(x_1, x_2)$ and $g(x_1, x_2) = 0$. The PDE is given by  
\begin{equation}\label{eq::CounterExample}
\left\{
\begin{aligned}
    -\alpha \Delta u + \beta u + \gamma u^k &= f(x_1, x_2), 
    && (x_1, x_2) \in \Omega, \\
    u(x_1, x_2) &= 0, 
    && (x_1, x_2) \in \partial \Omega.
\end{aligned}
\right.
\end{equation}
with the reference ground truth as $u(x_1, x_2) = \sin(\pi x_1)\sin(\pi x_2)$. In this experiment, the sampling strategy and the definition of $L_{HB}$ are identical to those in the heat equation example. We compare the original PINN and the proposed SSBE method under various PDE parameter settings. The corresponding numerical results are summarized in Table~\ref{tab:non_results}, showing that SSBE consistently achieves lower relative $L^2$ and $H^1$ errors across all tested configurations.

\begin{table}[htpb]
    \centering
    \begin{tabular}{|c|c|c|c|c|c|c|c|}
        \hline 
        \multicolumn{4}{|c|}{Parameters} & \multicolumn{2}{c|}{Original PINN} & \multicolumn{2}{c|}{SSBE} \\ 
        \hline
        $\alpha$ & $\beta$ & $\gamma$ & $k$ & Re $L^2$ & Re $H^1$ & Re $L^2$ & Re $H^1$ \\ 
        \hline 
        1 & 0 & 1 & 3 & $3.36 \mathrm{e\text{-}04}$& $1.04 \mathrm{e\text{-}03}$ & $1.78 \mathrm{e\text{-}05}$ & $3.54 \mathrm{e\text{-}05}$ \\ 
        \hline 
        1 & 0 & 1 & 5 & $4.82\mathrm{e\text{-}04}$ &  $1.58\mathrm{e\text{-}03}$& $2.01\mathrm{e\text{-}05}$ & $5.84\mathrm{e\text{-}05}$ \\ 
        \hline 
        1 & 1 & 1 & 3 & $2.45\mathrm{e\text{-}04}$ & $8.40\mathrm{e\text{-}04}$ & $1.78\mathrm{e\text{-}05}$ & $3.99\mathrm{e\text{-}05}$ \\ 
        \hline 
        1 & 5 & 5 & 3 & $1.05\mathrm{e\text{-}03}$ & $2.89\mathrm{e\text{-}03}$ & $2.06\mathrm{e\text{-}05}$ & $5.44\mathrm{e\text{-}05}$ \\ 
        \hline 
        0.1 & 5 & 5 & 3 &$4.99\mathrm{e\text{-}04}$  &  $1.86\mathrm{e\text{-}03}$ & $4.54\mathrm{e\text{-}05}$ & $2.37\mathrm{e\text{-}04}$ \\ 
        \hline
    \end{tabular}
    \caption{\textbf{Nonlinear elliptic equation:} Comparison between the original PINN and the proposed SSBE method under different parameter settings.}
    \label{tab:non_results}
\end{table}

\subsection{High Dimension Poisson Equation}
Neural networks are capable of mitigating the curse of dimensionality in solving high-dimensional problems. In this example, we illustrate this advantage by comparing the performance of the original PINN and the proposed SSBE method on the following high-dimensional Poisson equation:  
\begin{equation}
\left\{
\begin{aligned}
    -\Delta u &= f, 
    && \mathbf{x} \in \Omega, \\
    u &= g, 
    && \mathbf{x} \in \partial \Omega.
\end{aligned}
\right.
\end{equation}
where $\Omega = [-1,1]^d$ right-side terms
\begin{equation*}
    f(\mathbf{x}) = \frac{1}{d}\left(\sin \left(\frac{1}{d} \sum_{i=1}^d x_i\right) - 2\right),\ \mathbf{x}\in\Omega,\quad g(\mathbf{x})=\left(\frac{1}{d} \sum_{i=1}^d x_i\right)^2 + \sin \left(\frac{1}{d} \sum_{i=1}^d x_i\right),\ x\in\partial\Omega,
\end{equation*}
which admits the unique ground truth 
\[
u(\mathbf{x}) = \left(\frac{1}{d} \sum_{i=1}^d x_i\right)^2 + \sin \left(\frac{1}{d} \sum_{i=1}^d x_i\right).
\]
The SSBE boundary loss in the $d$-dimensional setting is defined as  
\begin{equation}
\begin{aligned}
L_{HB} = \sum_{i=1}^{d} \bigg( 
    &\int_{\partial \Omega_{1i}} \left( 
        \frac{\partial}{\partial x_i} u_\theta(x_i=1, x_1,\dots, x_{i-1}, x_{i+1},\dots, x_d) 
        - \frac{\partial}{\partial x_i} g(x_i=1, x_1,\dots, x_{i-1}, x_{i+1},\dots, x_d) 
    \right)^2 \diff S \\
  +\ &\int_{\partial \Omega_{2i}} \left( 
        \frac{\partial}{\partial x_i} u_\theta(x_i=-1, x_1,\dots, x_{i-1}, x_{i+1},\dots, x_d) 
        - \frac{\partial}{\partial x_i} g(x_i=-1, x_1,\dots, x_{i-1}, x_{i+1},\dots, x_d) 
    \right)^2 \diff S 
\bigg),
\end{aligned}
\label{Eqn.High_Poi_HB}
\end{equation}
with the sampling strategy as follows: $10{,}000$ points are uniformly sampled in the interior of $\Omega$, and $1{,}000 \times d$ points are uniformly sampled on the boundary $\partial\Omega$. The problem dimensions considered are $d = 3$, $10$, $20$, and $50$. The numerical results, presented in Table~\ref{tab:high_d_poi_table}, show that the proposed SSBE loss achieves accurate $H^1$ approximations even in high-dimensional settings, consistently outperforming the original PINN.

\begin{table}[htpb]
    \centering
    \begin{tabular}{|c|c|c|c|c|}
        \hline 
        Dimension & \multicolumn{2}{c|}{Orginal PINN} & \multicolumn{2}{c|}{SSBE} \\ 
        \hline 
        $d$ & Re $L^2$ & Re $H^1$ & Re $L^2$ & Re $H^1$ \\ 
        \hline 
        3 & $2.53\mathrm{e\text{-}04}$ & $7.11\mathrm{e\text{-}04}$ & $6.41\mathrm{e\text{-}05}$ & $8.87\mathrm{e\text{-}05}$ \\ 
        \hline 
        10 & $7.82\mathrm{e\text{-}04}$ & $1.49\mathrm{e\text{-}03}$ & $3.60\mathrm{e\text{-}04}$ & $6.29\mathrm{e\text{-}04}$ \\ 
        \hline 
        20 & $1.44\mathrm{e\text{-}03}$ & $2.43\mathrm{e\text{-}03}$ & $2.83\mathrm{e\text{-}04}$ & $5.50\mathrm{e\text{-}04}$ \\ 
        \hline 
        50 & $1.54\mathrm{e\text{-}03}$ & $2.42\mathrm{e\text{-}03}$ & $4.38\mathrm{e\text{-}04}$ & $4.73\mathrm{e\text{-}04}$ \\ 
        \hline 
    \end{tabular}
    \caption{\textbf{High-dimensional Poisson equation:} Relative $L^2$ and $H^1$ errors between the predicted and exact solutions $u(\mathbf{x})$ for the original PINN and the proposed SSBE method across different problem dimensions.}
    \label{tab:high_d_poi_table}
\end{table}

\begin{color}{black}
\subsection{Irregular Domain}

\subsubsection{Irregular domain: intrinsic boundary parameterization}
To further examine the robustness of the proposed SSBE-PINN on complex geometries, we consider the Poisson equation on a two-dimensional domain with a smooth oscillatory boundary. Specifically, we study the problem
\begin{equation}
\left\{
\begin{aligned}
    -\Delta u &= f,
    && \mathbf{x} \in \Omega, \\
    u &= g,
    && \mathbf{x} \in \partial\Omega.
\end{aligned}
\right.
\end{equation}
 the right-side term can be defined as following
\begin{equation*}
    f(x,y) = -2e^{x+y}, (x,y)\in\Omega \qquad g = e^{x+y}|_{\partial\Omega}.
\end{equation*}
The domains $\Omega$ are defined as a collection of irregular domains with boundaries which can be represented in polar form as
\begin{equation*}
    \Omega = \{(x,y): r < R(\omega), \omega\in[0,2\pi)\},\qquad \partial\Omega=\{(x,y): r = R(\omega), \omega\in[0,2\pi)\},  
\end{equation*}
where $R(\theta)$ is a smooth, oscillatory radius function. And it can be derived that this equation admits the unique ground truth
\begin{equation*}
    u(x,y) = e^{x+y}, \quad (x,y)\in\Omega. 
\end{equation*}

For such geometries, by defining the polar coordinate
\begin{equation*}
    \gamma(\omega)=\bigl(R(\omega)\cos\omega,\; R(\omega)\sin\omega\bigr),
\end{equation*}
the proposed SSBE enforces boundary conditions in the Sobolev $H^1(\partial\Omega)$ sense, penalizing not only the boundary value but also its intrinsic tangential derivative with respect to $\theta$:
\begin{equation*}
    L_{LB}+L_{HB}
= 
\int_{0}^{2\pi}\left(
  |u(\gamma(\omega))-g(\gamma(\omega))|^2
  +
  \left|\partial_{\omega}u(\gamma(\omega))
       -\partial_{\omega}g(\gamma(\omega))\right|^2
\right)|\gamma'(\omega)|\diff{\omega}.
\end{equation*}

We test the SSBE framework on various intrinsic boundary parametrization $R(\omega)$. The numerical results are summarized in Table~\ref{tab:ir_domain_poi_table} and illustrated in Figure~\ref{fig.ir_poi}. Across all domains, SSBE consistently outperforms the standard PINN with $L^2(\partial\Omega)$ enforcement, achieving significantly lower relative errors in both $L^2(\Omega)$ and $H^1(\Omega)$ norms.

\begin{table}[htpb]
    \centering
    \begin{tabular}{|c|c|c|c|c|}
        \hline 
        Domain Shape & \multicolumn{2}{c|}{Orginal PINN} & \multicolumn{2}{c|}{SSBE} \\ 
        \hline 
         $\omega\in[0,2\pi)$& Re $L^2$ & Re $H^1$ & Re $L^2$ & Re $H^1$ \\ 
        \hline 
        Flowers $R(\omega)= 1 + 0.3\cos(5\omega)$ & $8.312\mathrm{e\text{-}05}$ & $4.035\mathrm{e\text{-}04}$ & $8.589\mathrm{e\text{-}06}$ & $3.987\mathrm{e\text{-}05}$ \\ 
        \hline 
        Heart $R(\omega)= 0.6 + 0.6\cos(\omega)$ & $3.465\mathrm{e\text{-}05}$ & $2.073\mathrm{e\text{-}04}$ & $2.711\mathrm{e\text{-}06}$ & $5.053\mathrm{e\text{-}05}$ \\ 
        \hline 
        Lips $R(\omega)= 1+ 0.25\cos(2\omega) + 0.1\cos(6\omega)$ & $5.533\mathrm{e\text{-}05}$ & $2.601\mathrm{e\text{-}04}$ & $1.004\mathrm{e\text{-}05}$ & $4.810\mathrm{e\text{-}05}$ \\ 
        \hline 
    \end{tabular}
    \caption{\textcolor{black}{\textbf{Irregular Domain Poisson equation:} Relative $L^2$ and $H^1$ errors between the predicted and exact solutions $u(\mathbf{x})$ for the original PINN and the proposed SSBE method across shapes.}}
    \label{tab:ir_domain_poi_table}
\end{table}

\begin{figure}[!ht]
    \centering
    \setcounter {subfigure} 0(a.1){
    \includegraphics[scale=0.24]{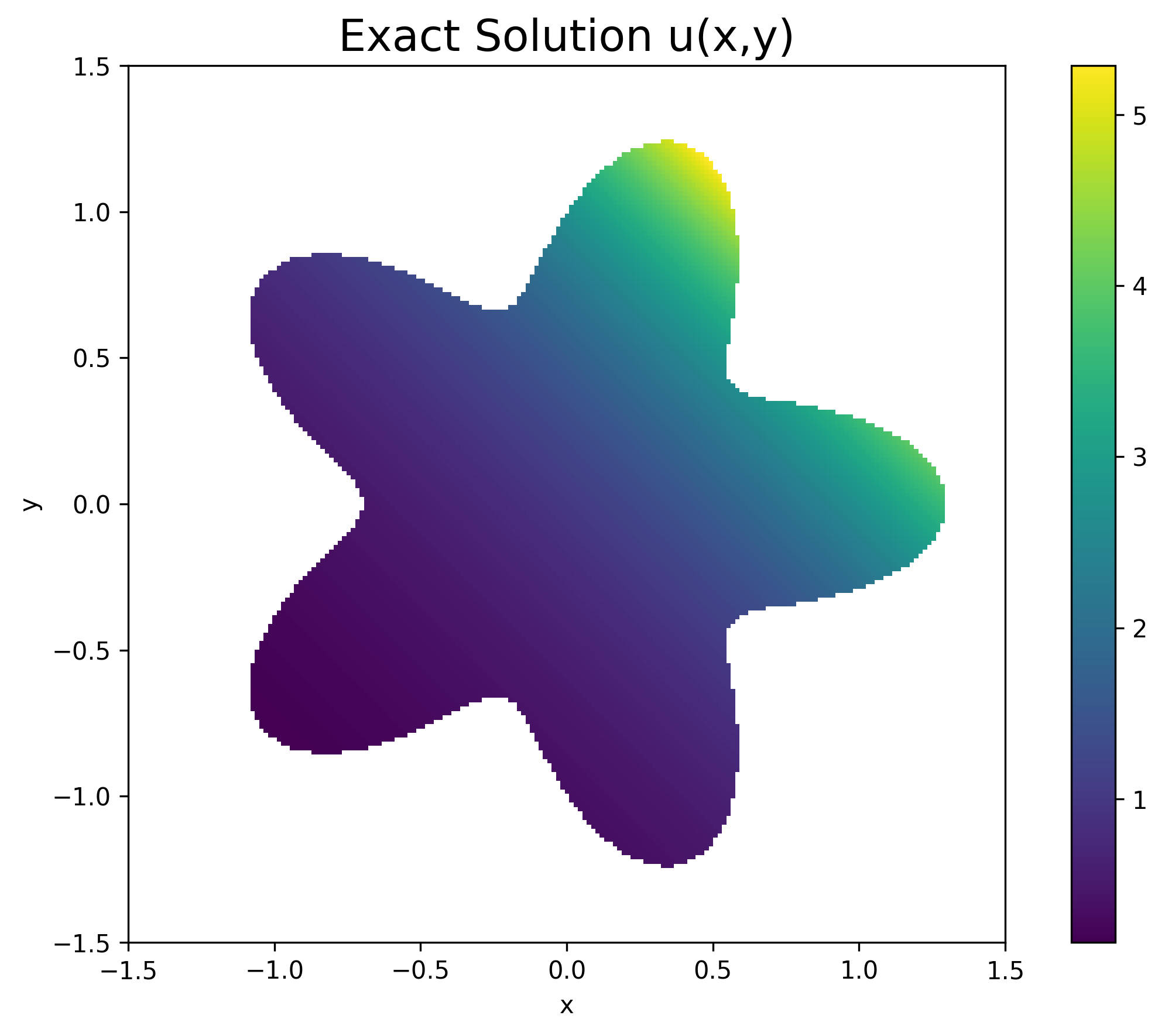}}
    \setcounter {subfigure} 0(a.2){
    \includegraphics[scale=0.24]{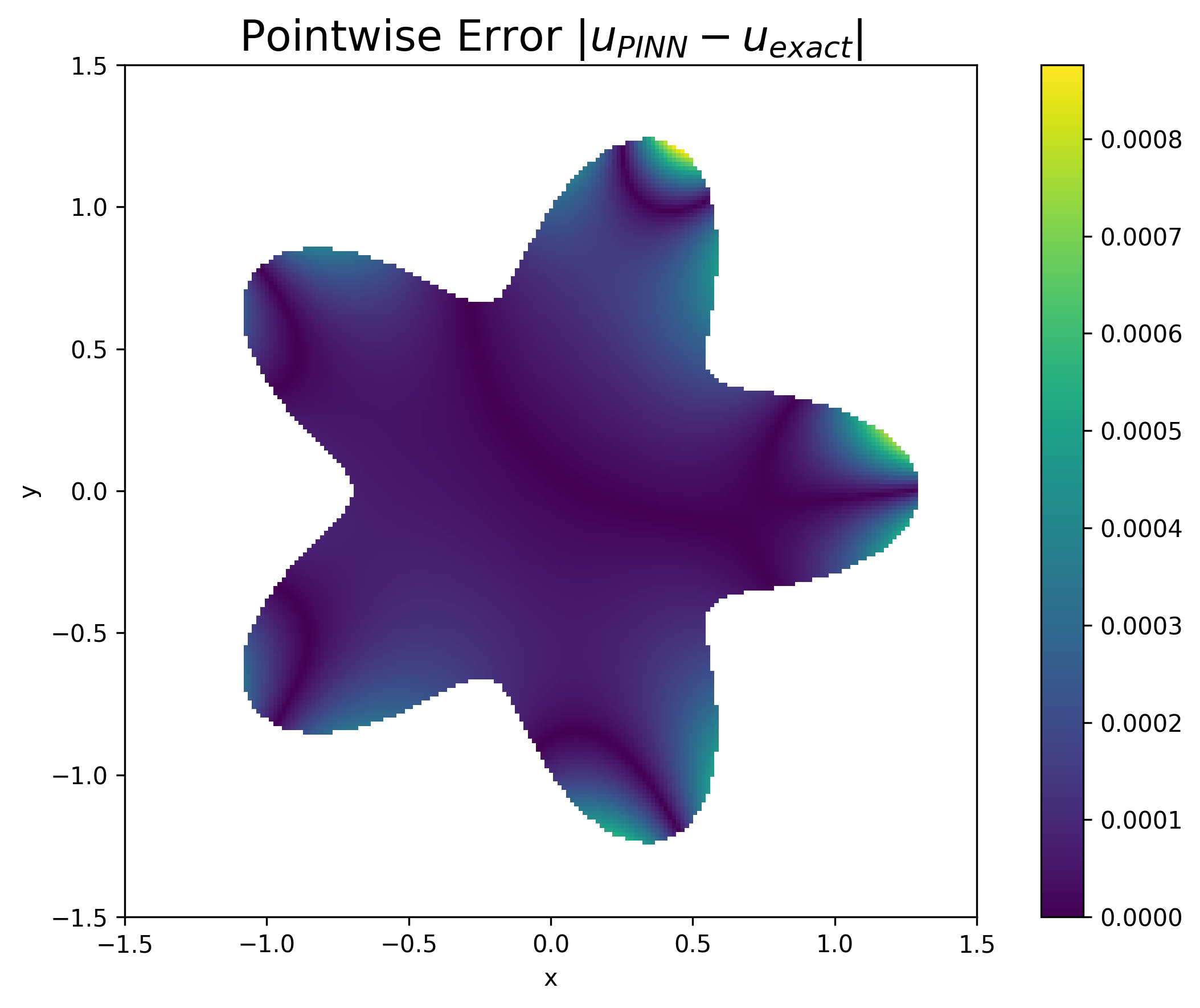}}
    \setcounter {subfigure} 0(a.3){
    \includegraphics[scale=0.24]{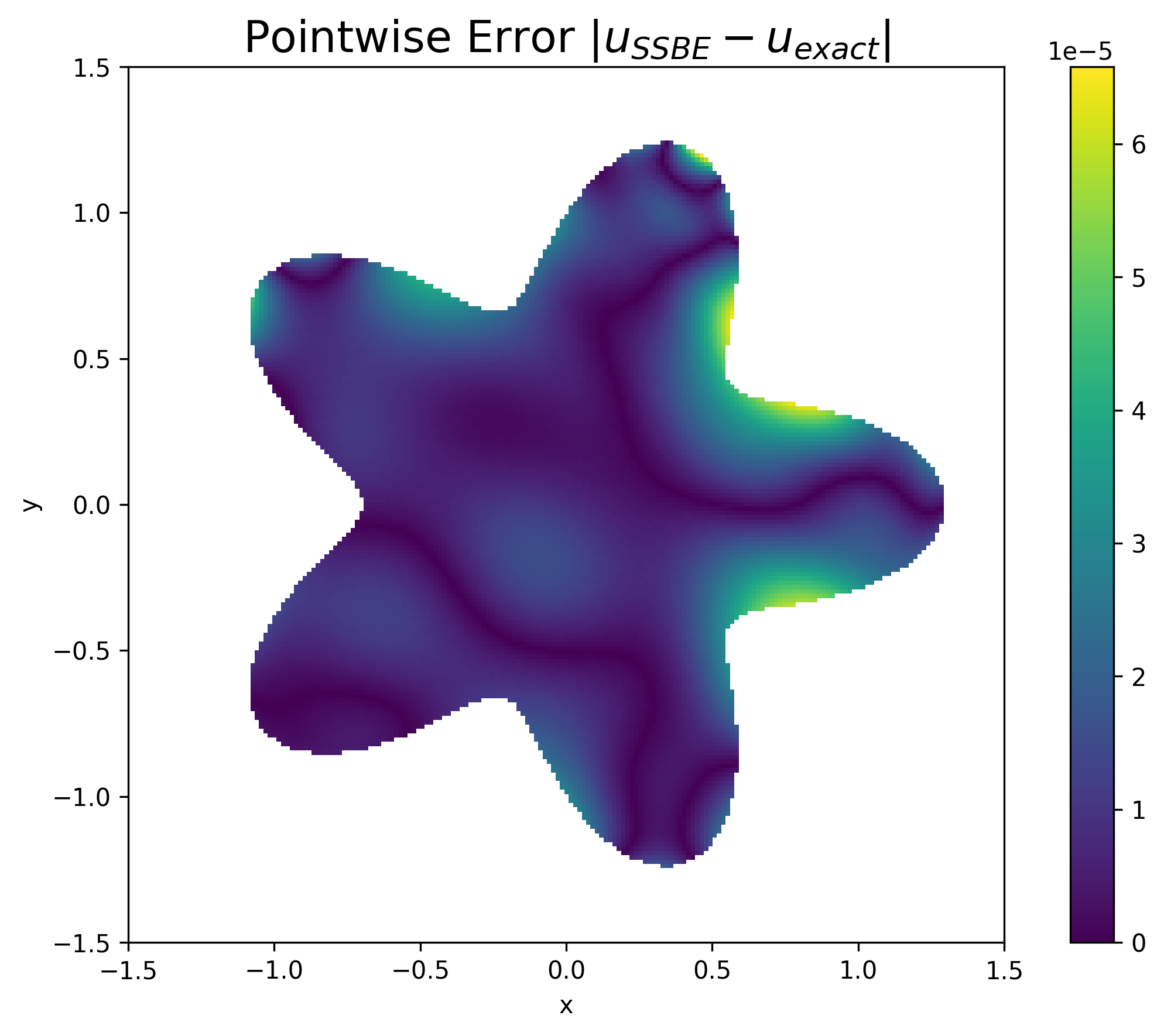}}\\
    \setcounter {subfigure} 0(b.1){
    \includegraphics[scale=0.24]{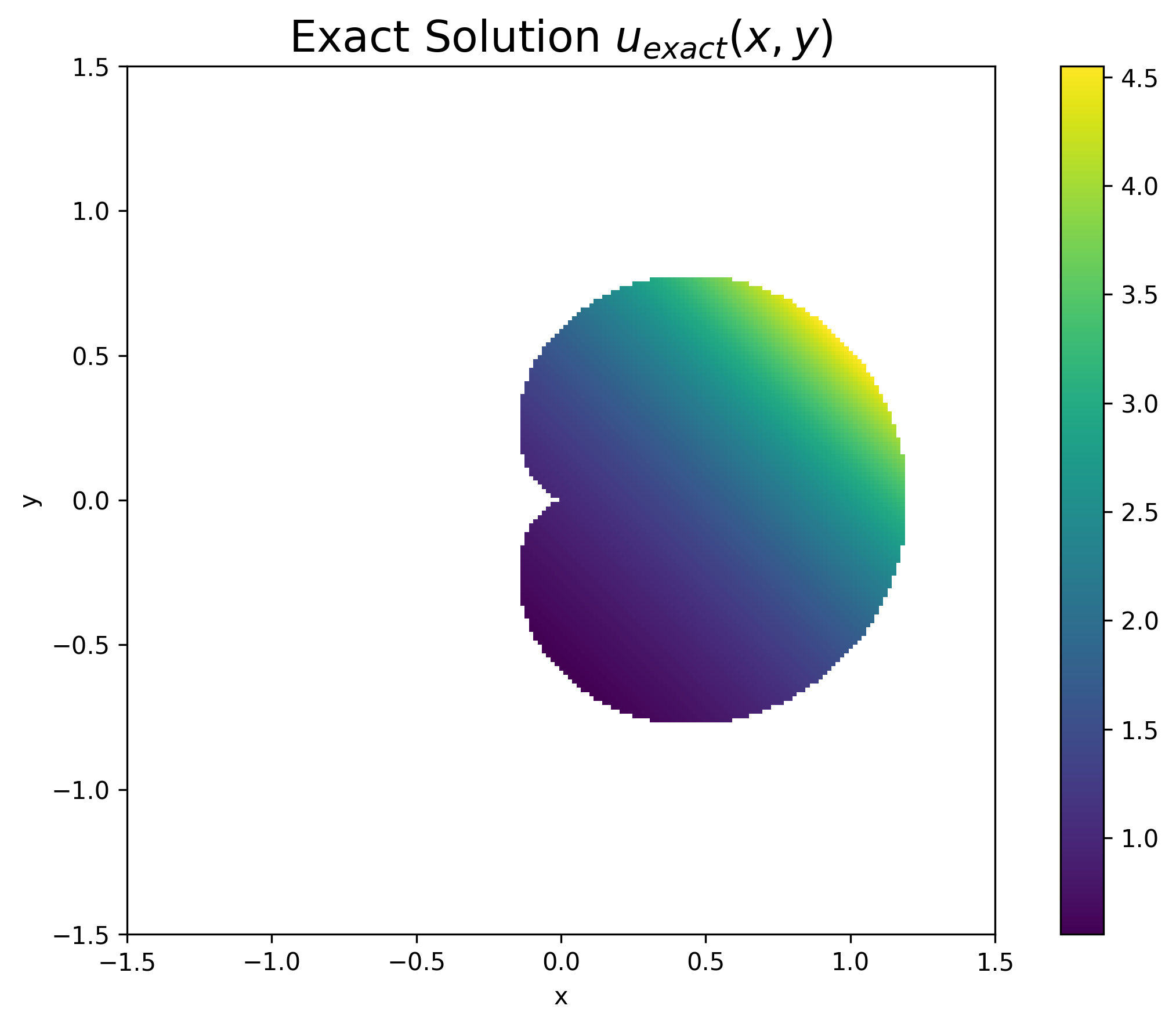}}
    \setcounter {subfigure} 0(b.2){
    \includegraphics[scale=0.24]{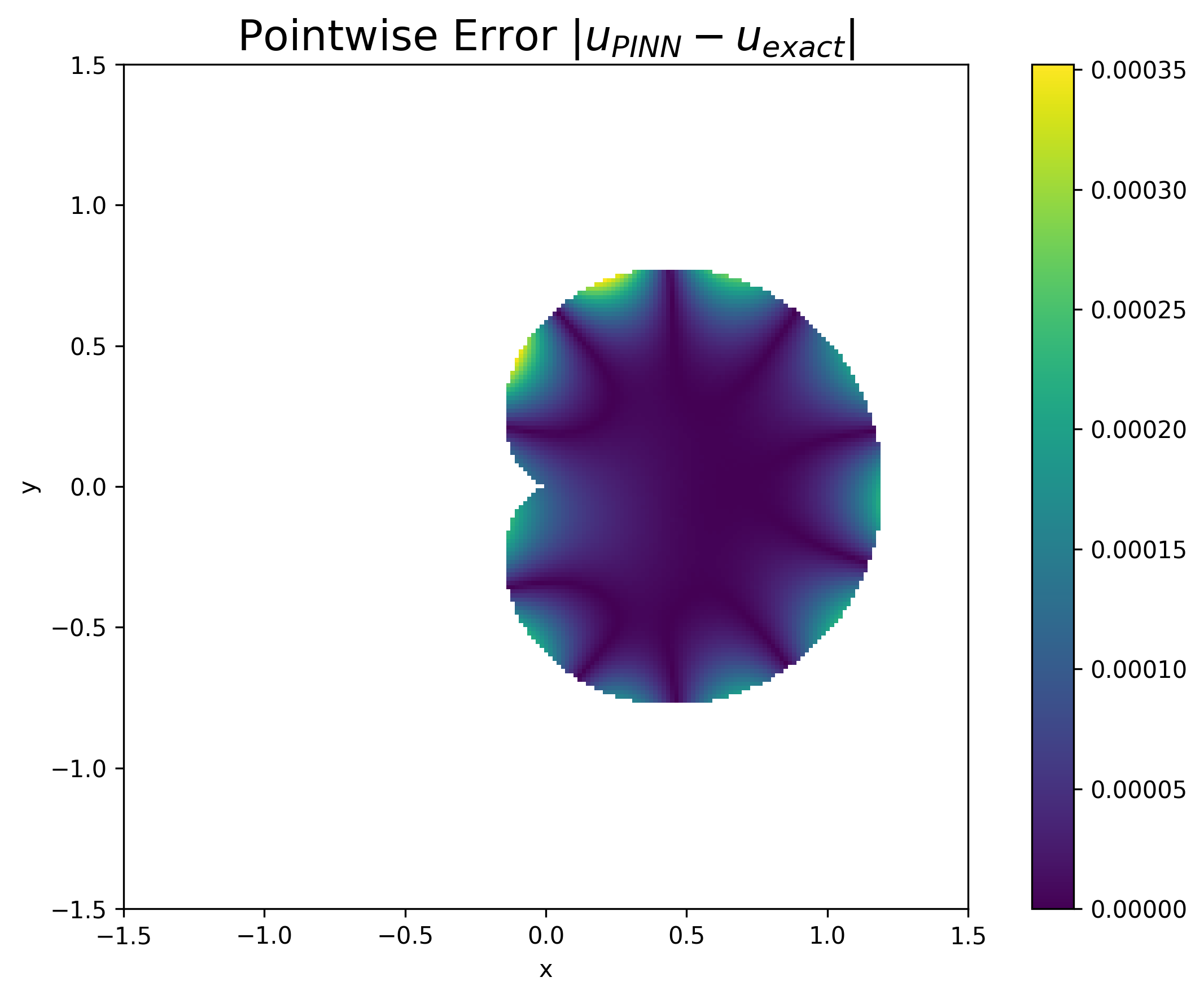}}
    \setcounter {subfigure} 0(b.3){
    \includegraphics[scale=0.24]{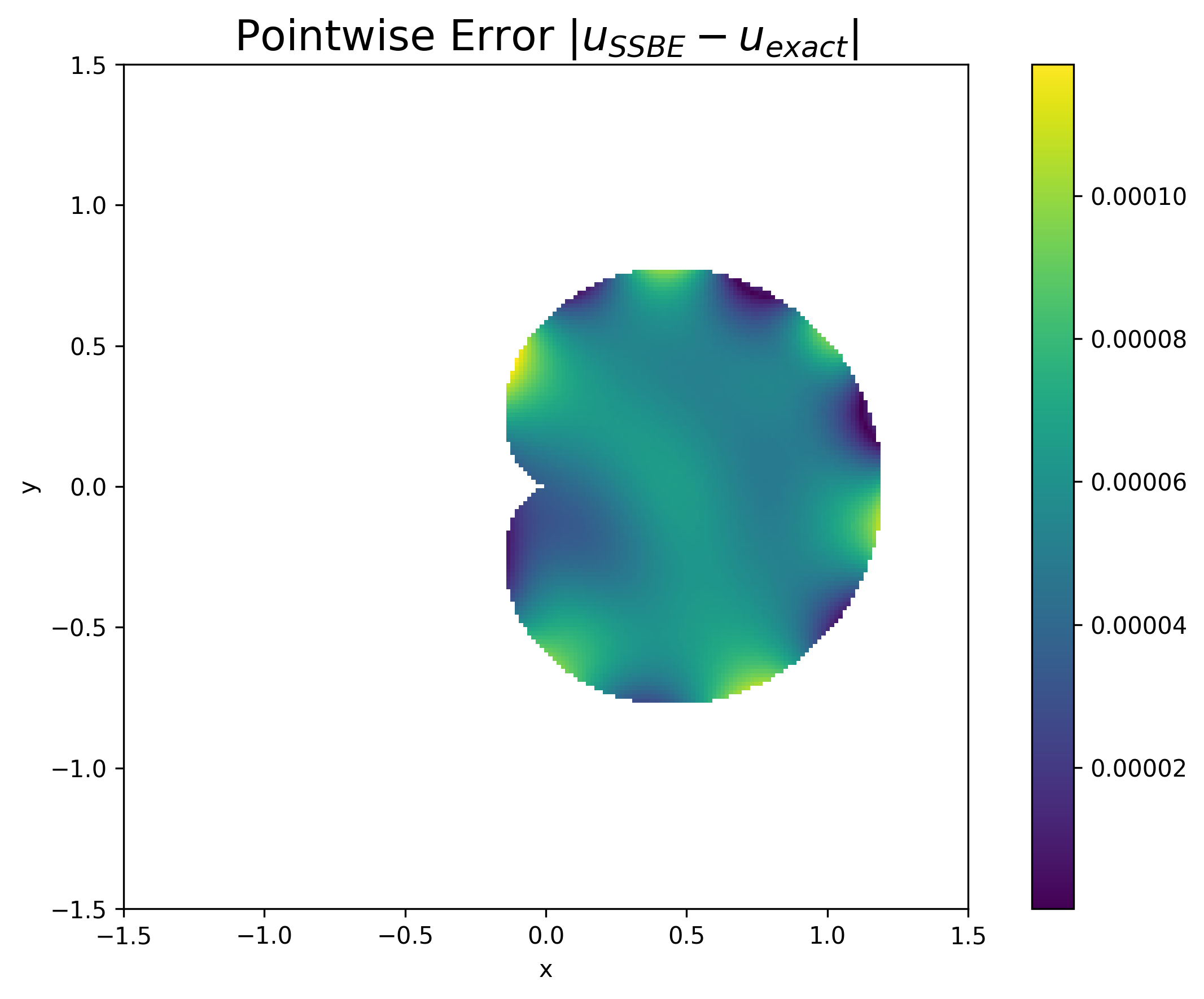}} \\
    \setcounter {subfigure} 0(c.1){
    \includegraphics[scale=0.24]{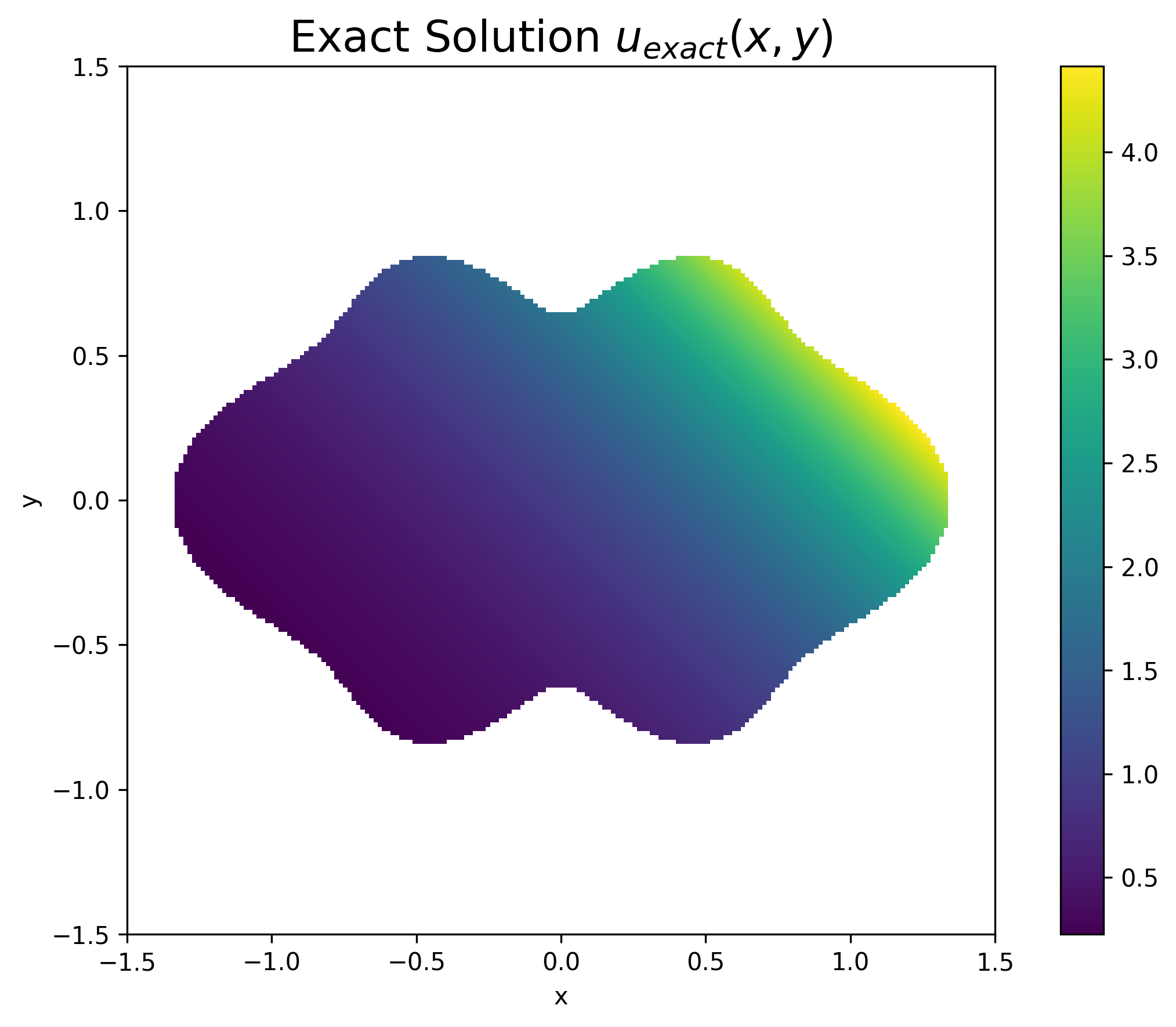}}
    \setcounter {subfigure} 0(c.2){
    \includegraphics[scale=0.24]{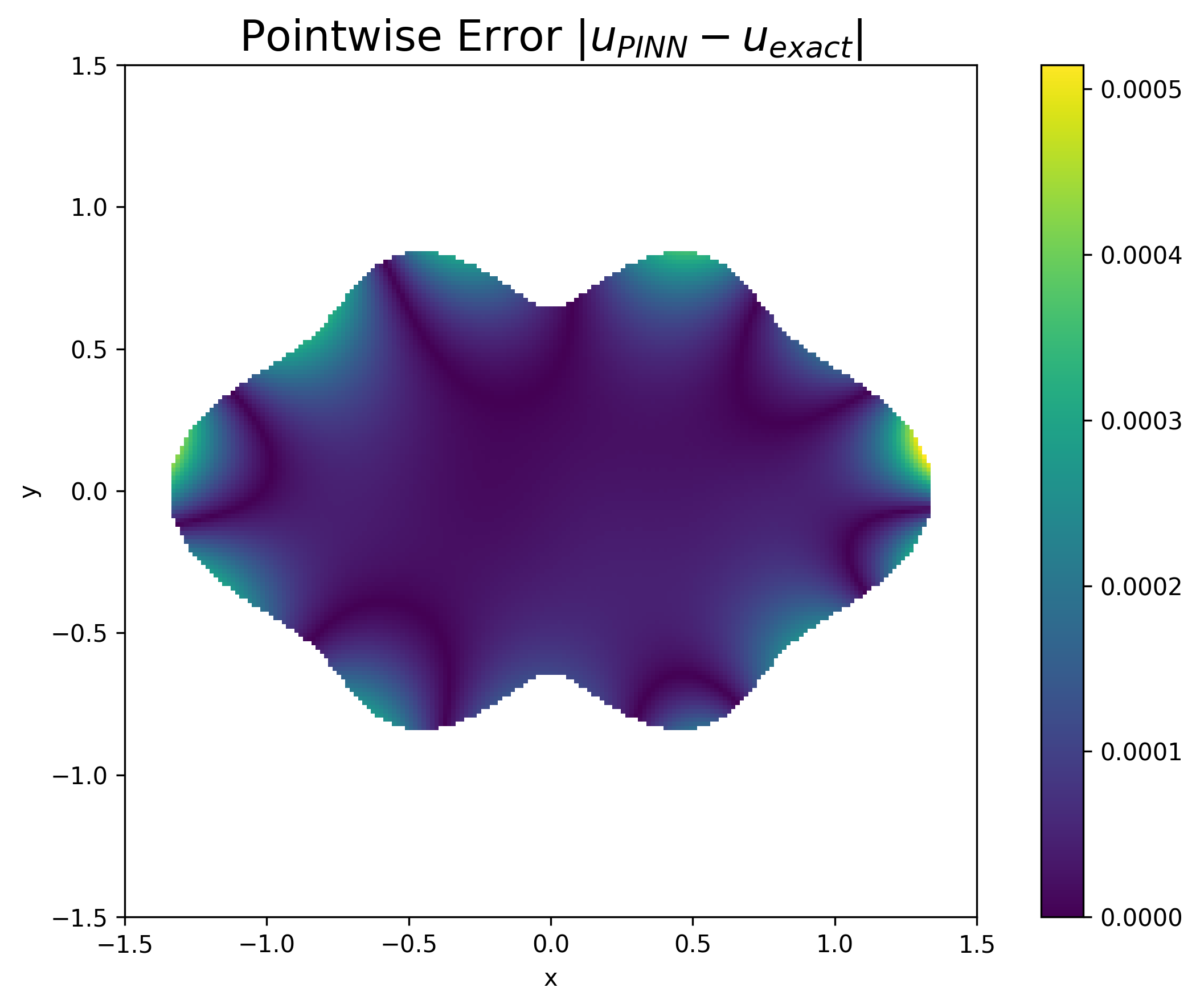}}
    \setcounter {subfigure} 0(c.3){
    \includegraphics[scale=0.24]{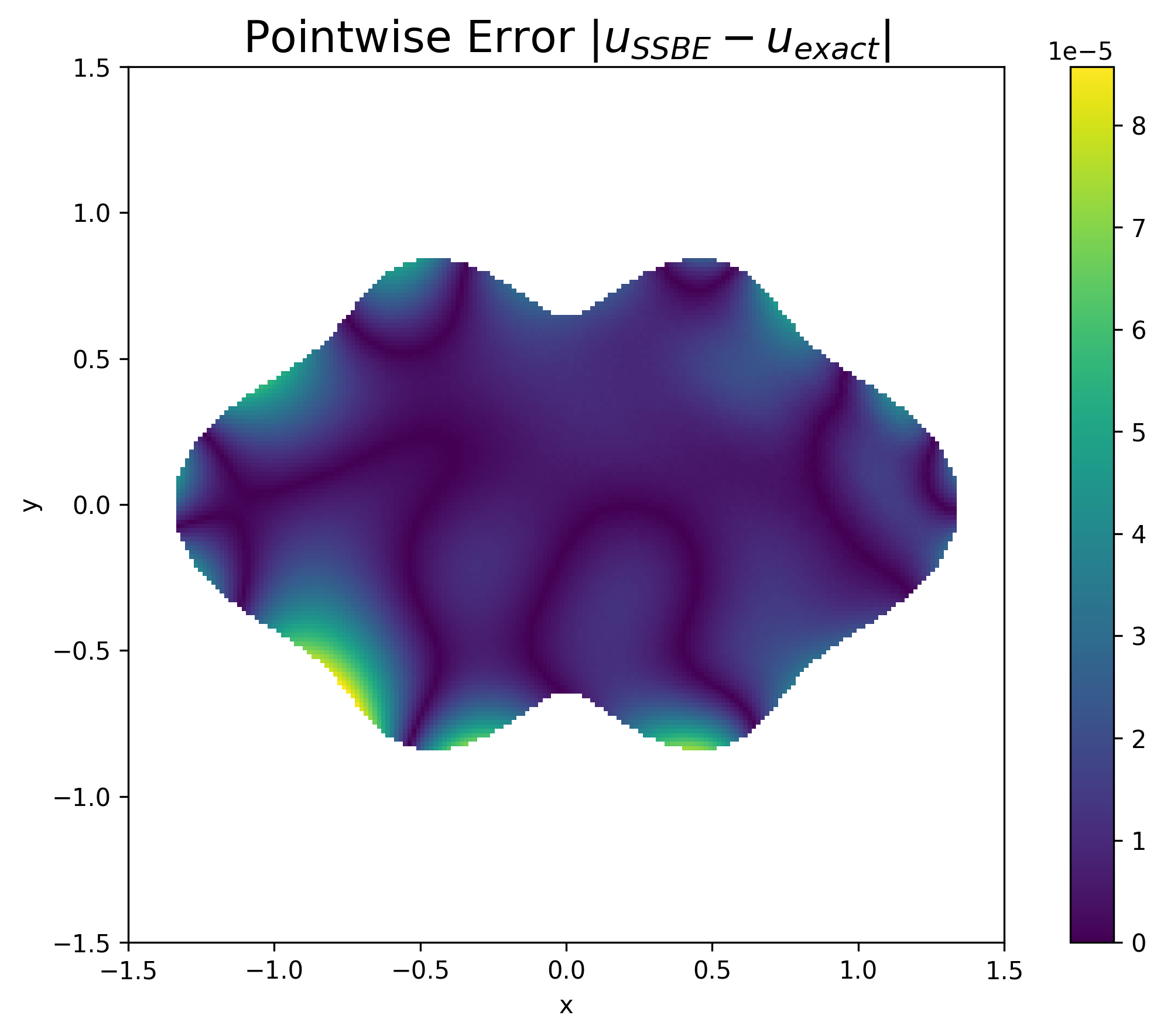}} 
    
\caption{\textcolor{black}{\textbf{Poisson equation on irregular domains.} 
For each domain ((a) flower, (b) heart, and (c) lips), the first column shows the exact solution, the second column shows the pointwise error of the standard PINN, and the third column shows the pointwise error of the proposed SSBE-PINN.}}
    \label{fig.ir_poi}
\end{figure}

%%多孔介质问题
%% 带传播系数的poisson equation
\subsubsection{Irregular domain: porous medium}
We then consider a Poisson problem posed on a two-dimensional porous domain. Specially, we consider the Poisson equation with diffusion coefficient $k(\mathbf{x})=1+x+y$ as
\begin{equation}
\left\{
\begin{aligned}
    -\nabla \cdot (k(\mathbf{x})\nabla u(\mathbf{x}))  &= f,
    && \mathbf{x} \in \Omega, \\
    u &= g,
    && \mathbf{x} \in \partial\Omega.
\end{aligned}
\right.
\end{equation}
with the right-side terms
\begin{equation*}
\begin{aligned}
    f(x,y) &= \frac{\pi^2}{2}(1+x+y)\sin{\left(\frac{\pi}{2}x\right)}\sin{\left(\frac{\pi}{2}y\right)}-\frac{\pi}{2}\sin{\left(\frac{\pi}{2}(x+y)\right)}, (x,y)\in\Omega,\\
     g(x,y) &= \sin\left(\frac{\pi}{2}x\right)\sin\left(\frac{\pi}{2}y\right)|_{\partial\Omega}.
\end{aligned}
\end{equation*}
We study the above equation defined on several porous medium cases by removing holes from the two-dimension square $[-2,2]^2$. In Case~1, the computational domain $\Omega$ is obtained by removing three solid 
inclusions from the square $[-2,2]^2$, namely
\begin{equation}
    \begin{aligned}
        &\Omega := [-2,2]^2 \setminus 
        \left(\mathrm{Ellipse}_1 \cup \mathrm{Ellipse}_2 \cup \mathrm{Ellipse}_3\right), \\
        &\mathrm{Ellipse}_1 : 4x^2 + 9y^2 = 1, \\
        &\mathrm{Ellipse}_2 : (x+1)^2 + (y-1)^2 = \tfrac{1}{4}, \\
        &\mathrm{Ellipse}_3 : (x-1)^2 + (y+1)^2 = \tfrac{1}{4}, \\
        & \partial \Omega:= \partial [-2,2]^2\cup \partial \mathrm{Ellipse}_1 \cup \partial \mathrm{Ellipse}_2 \cup \partial \mathrm{Ellipse}_3.
    \end{aligned}
\end{equation}
In Cases~2 and~3, we further increase the complexity of the porous medium by 
removing a larger number of holes of varying sizes, leading to more highly 
perforated domains. And it can be derived that this equation admits the unique ground truth
\begin{equation*}
    u(x,y)=\sin\left(\frac{\pi}{2}x\right)\sin\left(\frac{\pi}{2}y\right),\quad (x,y)\in\Omega.
\end{equation*}

The numerical results are summarized in Table~\ref{tab:porous medium} and illustrated in Figure~\ref{Fig.porous medium}. Across the considered cases, SSBE consistently outperforms the standard PINN with $L^2(\partial\Omega)$ penalty, achieving lower relative errors in both the $L^2(\Omega)$ and $H^1(\Omega)$ norms.
\begin{table}[htpb]
    \centering
    \begin{tabular}{|c|c|c|c|c|}
        \hline 
        Domain  & \multicolumn{2}{c|}{Orginal PINN} & \multicolumn{2}{c|}{SSBE} \\ 
        \hline 
          & Re $L^2$  & Re $H^1$ & Re $L^2$ & Re $H^1$ \\ 
        \hline 
        Case1 & $1.3857\mathrm{e\text{-}03}$ & $2.1237\mathrm{e\text{-}03}$ & $2.0496\mathrm{e\text{-}04}$ & $4.2578\mathrm{e\text{-}04}$ \\ 
        \hline 
        Case2 & $4.0662\mathrm{e\text{-}04}$ & $1.2502\mathrm{e\text{-}03}$ & $1.6125\mathrm{e\text{-}04}$ & $4.3977\mathrm{e\text{-}04}$ \\ 
        \hline 
        Case3 & $6.9132\mathrm{e\text{-}04}$ & $1.9500\mathrm{e\text{-}03}$ & $1.5940\mathrm{e\text{-}04}$ & $4.5886\mathrm{e\text{-}04}$ \\ 
        \hline 
    \end{tabular}
    \caption{\textcolor{black}{\textbf{Porous medium:} Relative $L^2$ and $H^1$ errors between the predicted and exact solutions $u(\mathbf{x})$ for the original PINN and the proposed SSBE method across shapes.}}
    \label{tab:porous medium}
\end{table}

\begin{figure}[!ht]
    \centering
    \setcounter {subfigure} 0(a.1){
    \includegraphics[scale=0.24]{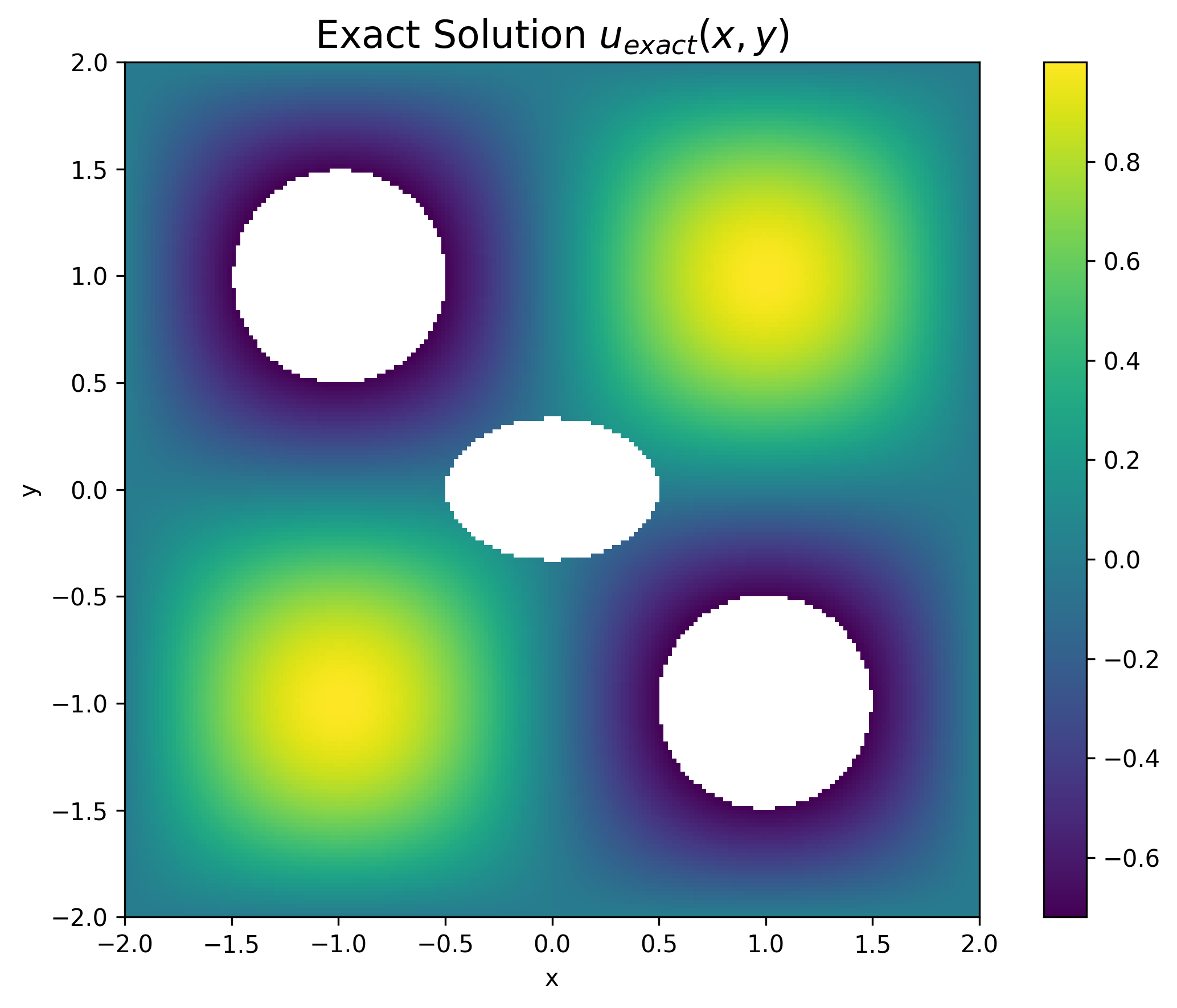}}
    \setcounter {subfigure} 0(a.2){
    \includegraphics[scale=0.24]{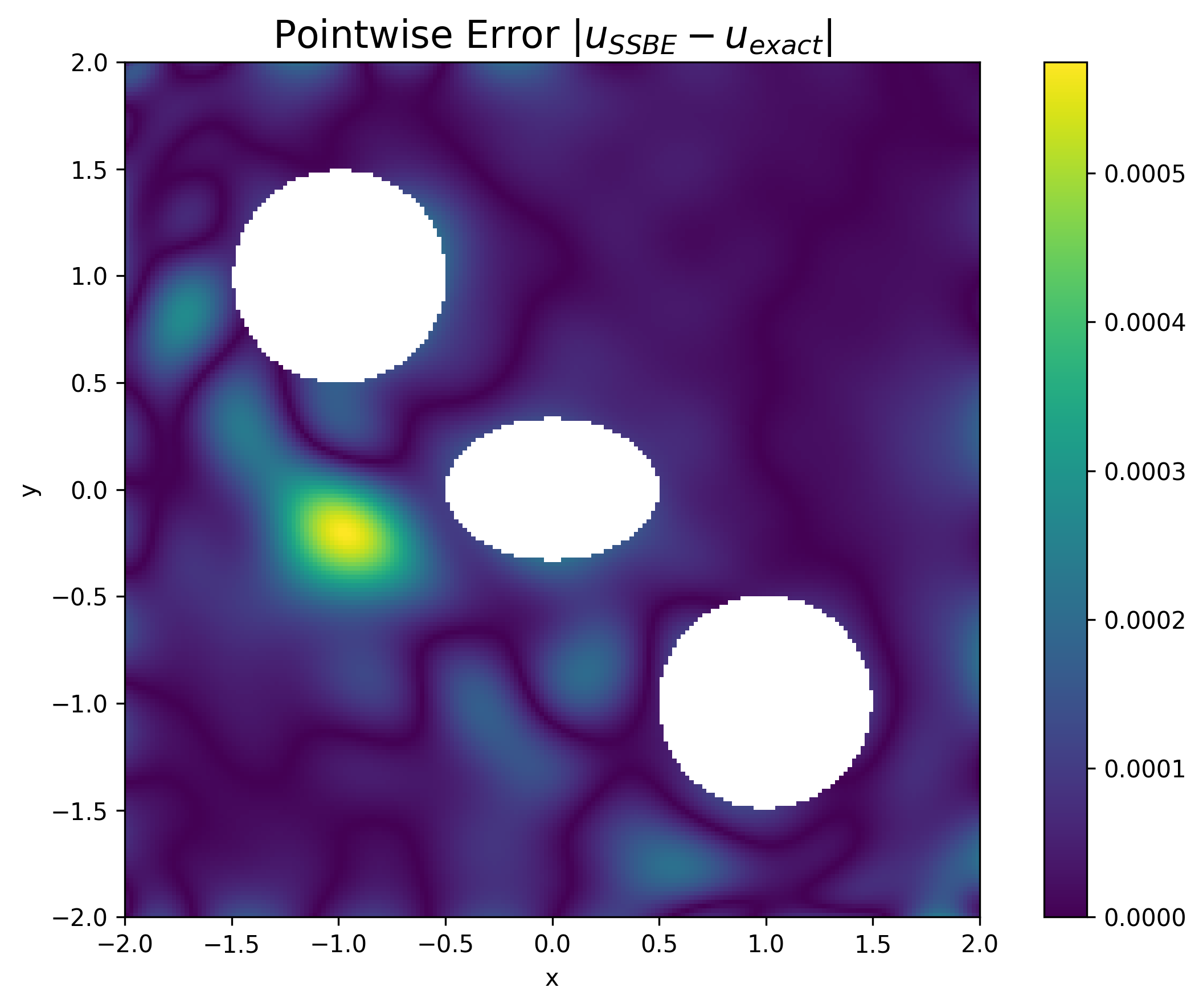}}
    \setcounter {subfigure} 0(a.3){
    \includegraphics[scale=0.24]{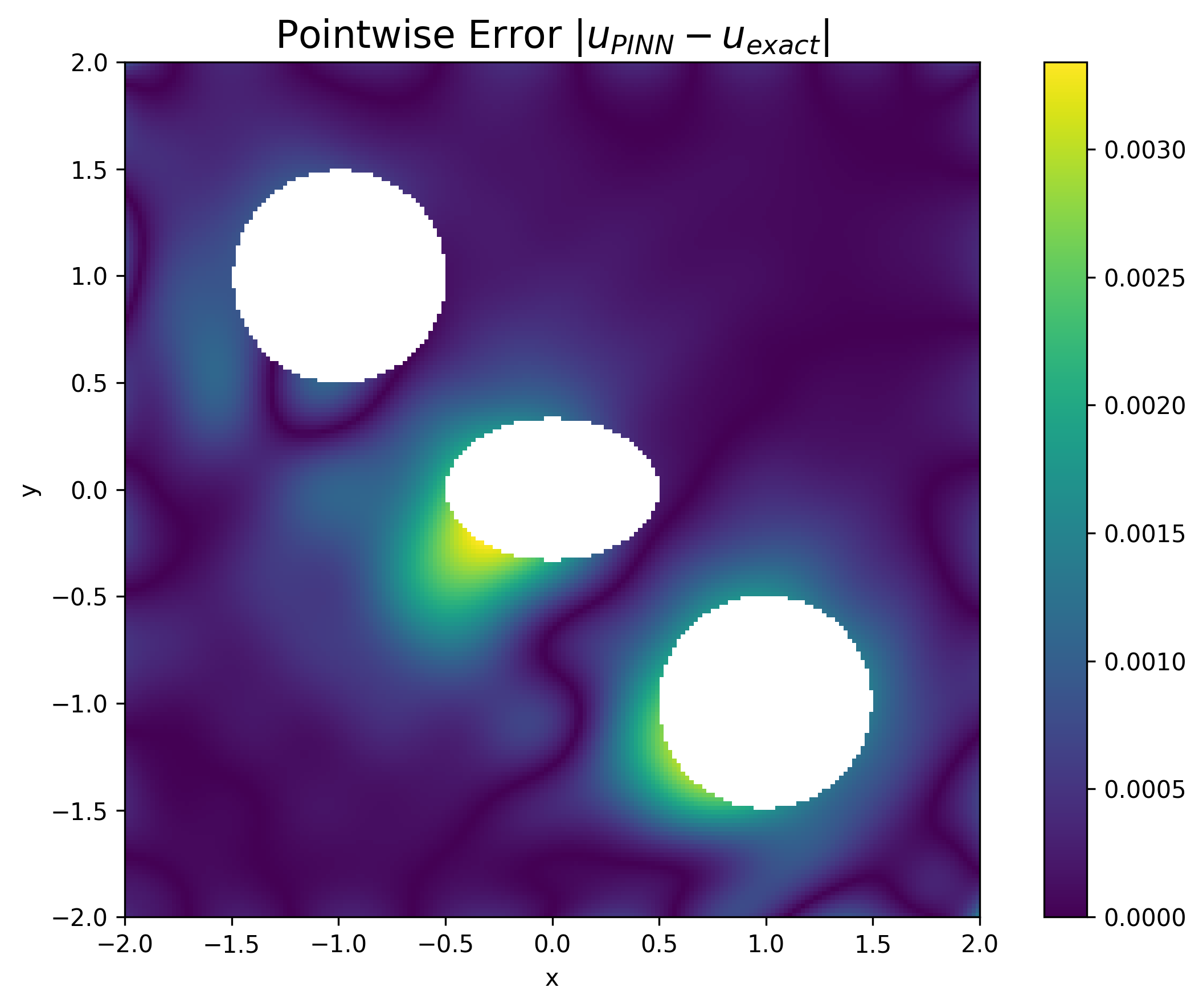}}\\
    \setcounter {subfigure} 0(b.1){
    \includegraphics[scale=0.24]{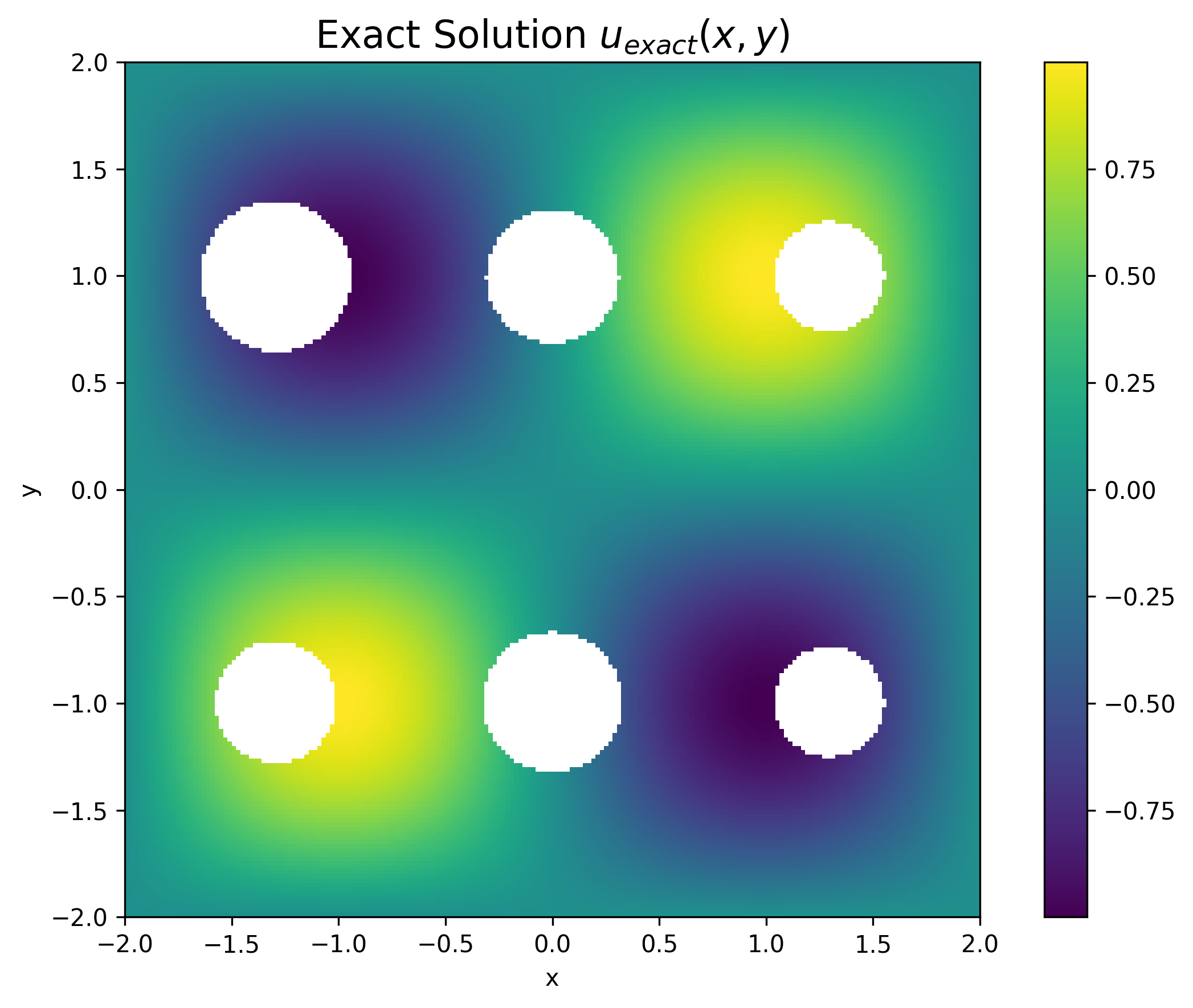}}
    \setcounter {subfigure} 0(b.2){
    \includegraphics[scale=0.24]{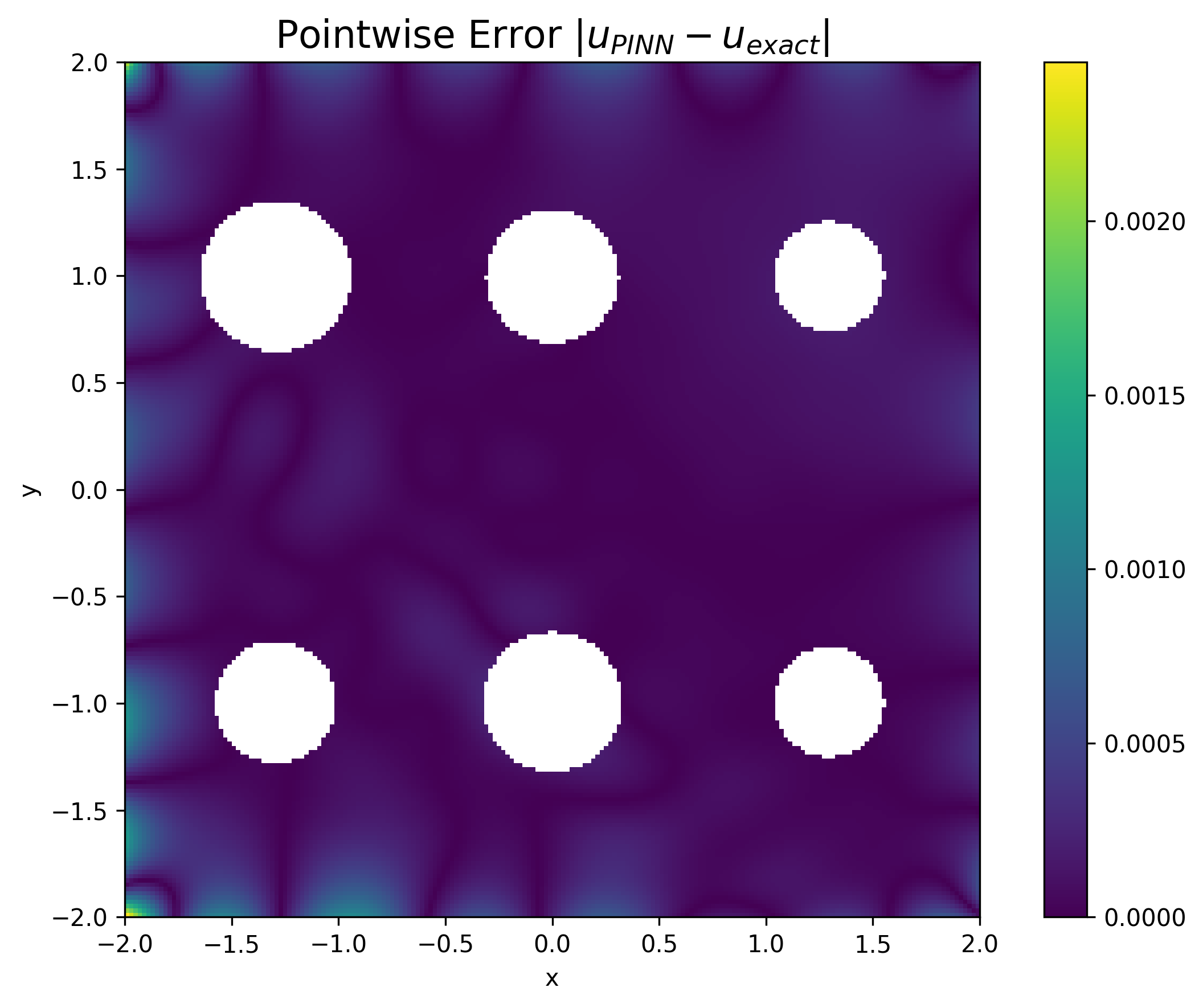}}
    \setcounter {subfigure} 0(b.3){
    \includegraphics[scale=0.24]{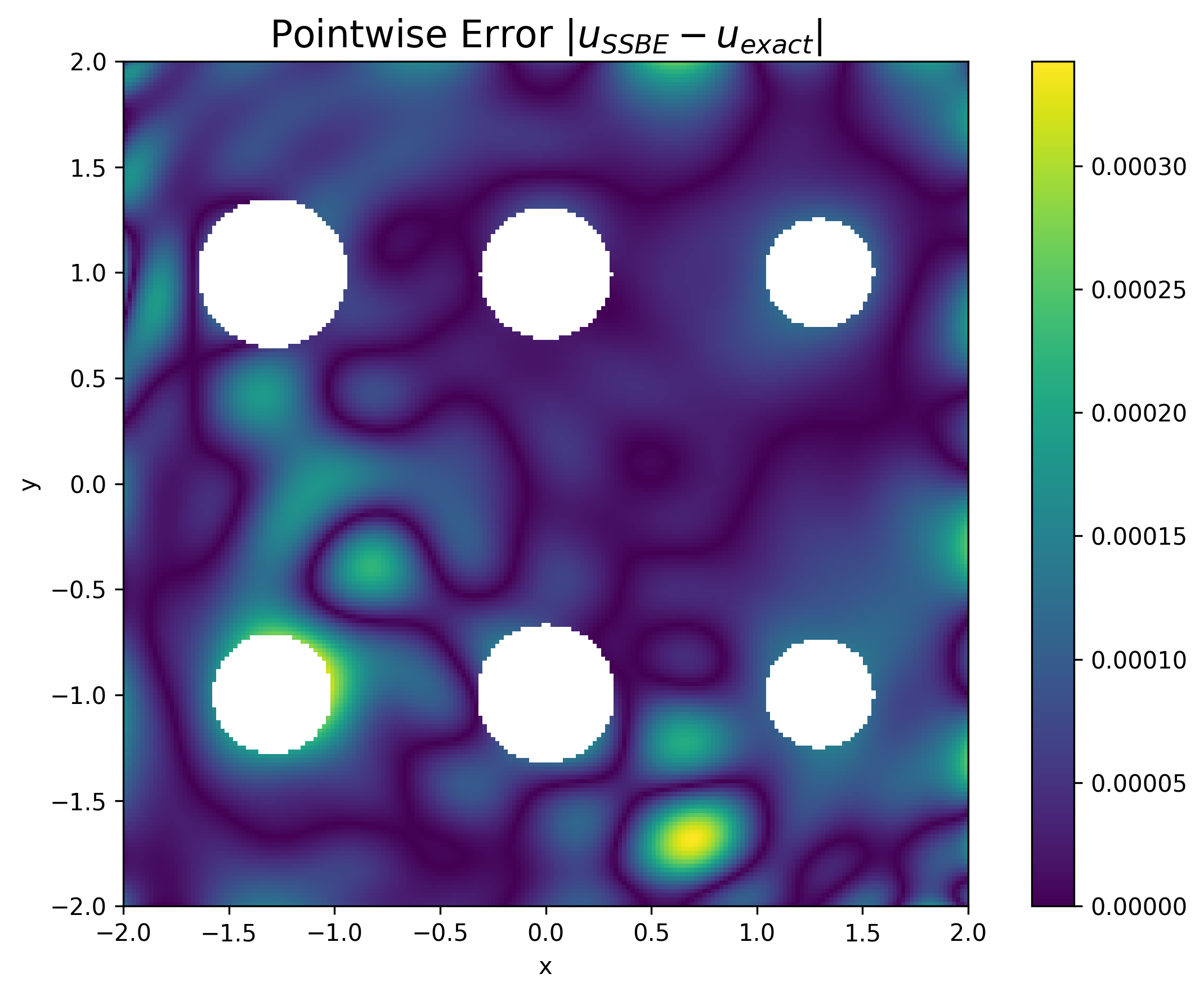}} \\
    \setcounter {subfigure} 0(c.1){
    \includegraphics[scale=0.24]{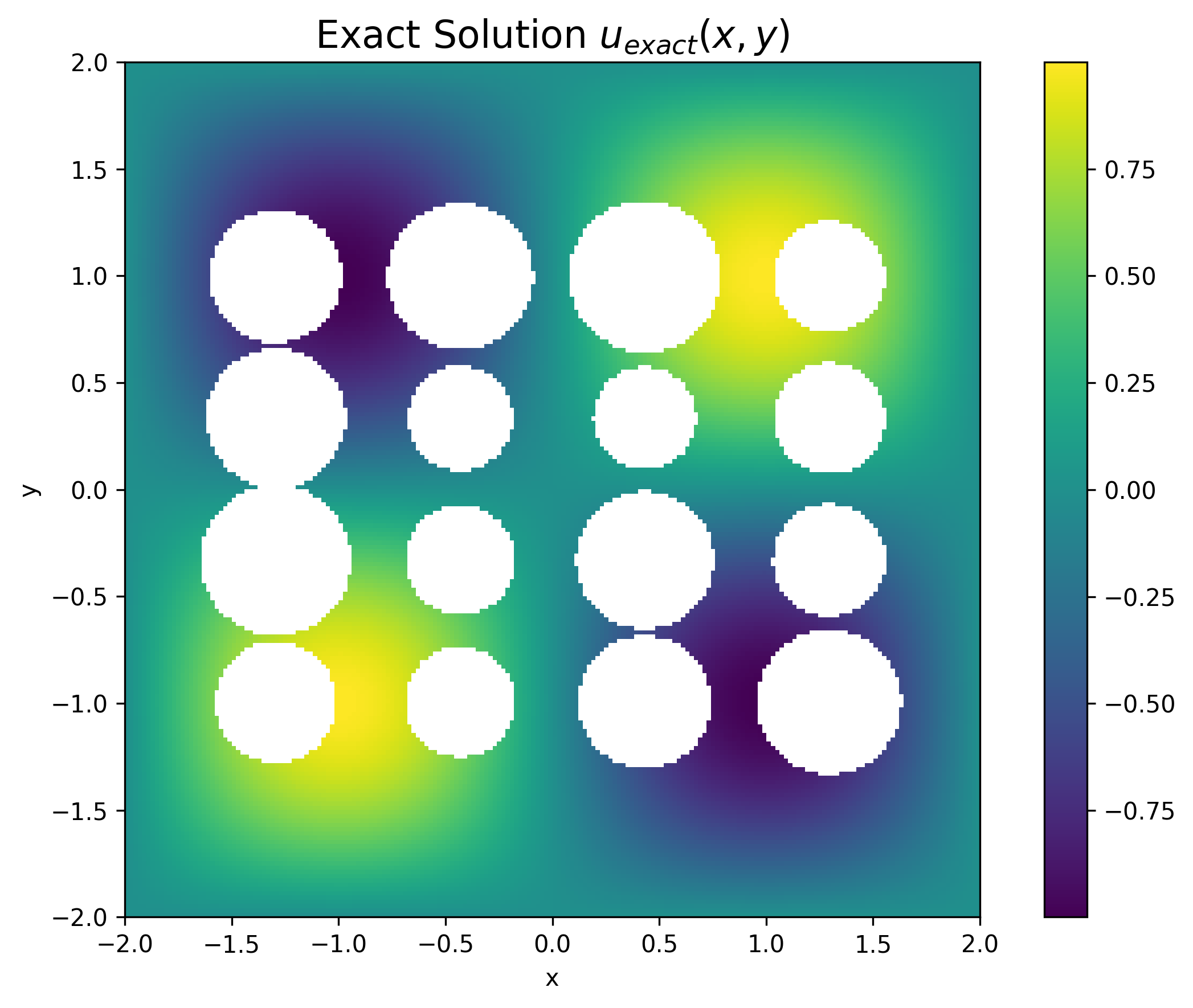}}
    \setcounter {subfigure} 0(c.2){
    \includegraphics[scale=0.24]{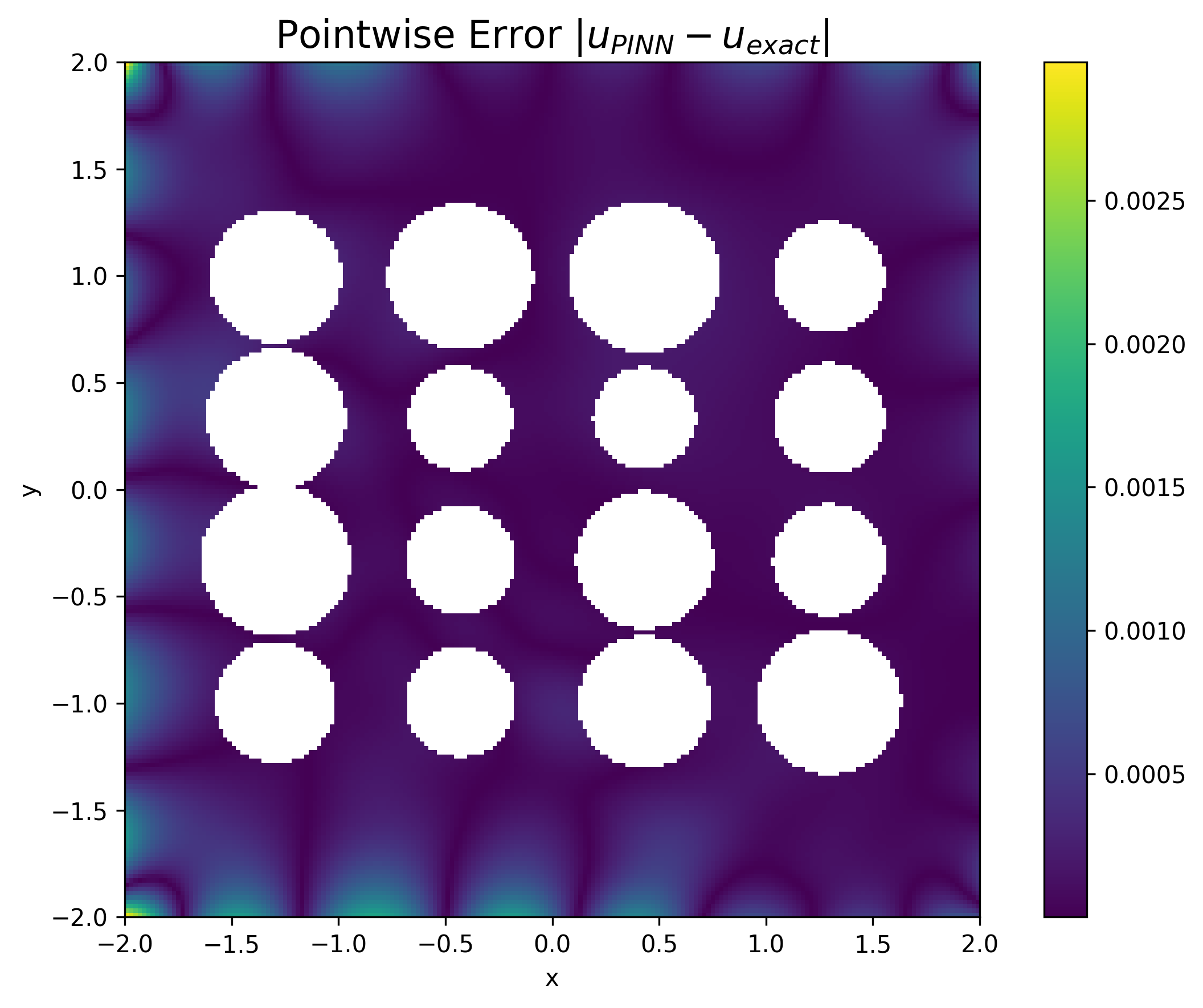}}
    \setcounter {subfigure} 0(c.3){
    \includegraphics[scale=0.24]{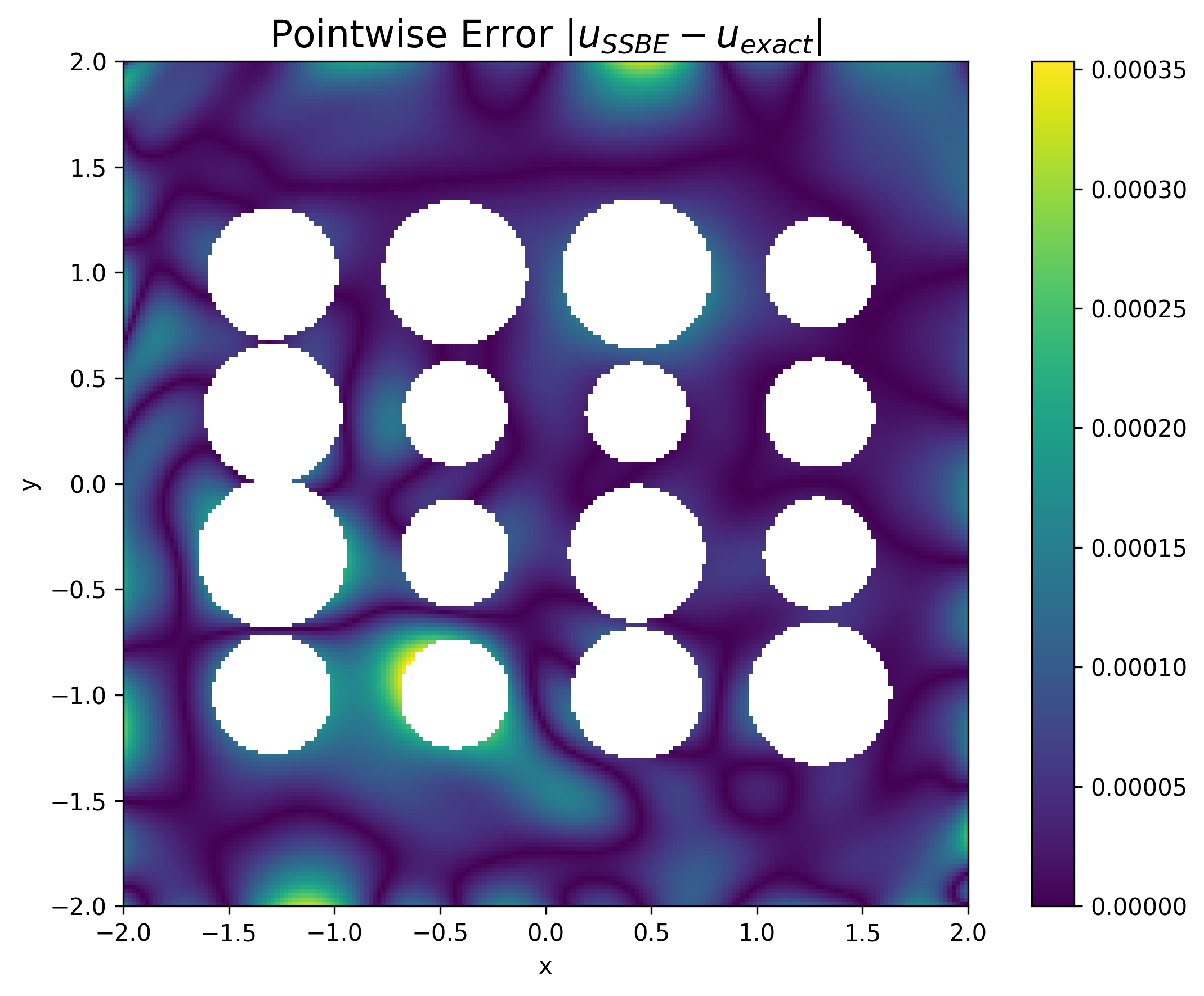}} 
    
\caption{ 
\textcolor{black}{\textbf{Porous medium.} For each domain ((a) case1, (b) case2, and (c) case3), the first column shows the exact solution, the second column shows the pointwise error of the standard PINN, and the third column shows the pointwise error of the proposed SSBE-PINN.}}
    \label{Fig.porous medium}
\end{figure}
\end{color}
\section{Conclusion} \label{sec:conclusion}
In this paper, we present and analyze a novel method, \textit{Sobolev-Stable Boundary Enforcement} (SSBE), designed to address the limitations of Physics-Informed Neural Networks (PINNs) in achieving robust and accurate \( H^1 \) convergence when solving partial differential equations (PDEs). Through rigorous theoretical analysis and extensive numerical experimentation, we establish several key findings:

Firstly, conventional PINNs often fail to achieve accurate convergence in the \( H^1 \) norm due to inadequately defined boundary loss terms, especially when minor boundary inaccuracies occur. Our analysis in Section~\ref{sec::CounterExample} illustrates this critical limitation explicitly, highlighting the need for improved boundary enforcement strategies.

Secondly, the proposed SSBE method significantly enhances convergence properties by explicitly integrating boundary regularity conditions into the neural network training process. By redefining the boundary loss in the Sobolev space, SSBE effectively mitigates numerical instabilities and improves both solution accuracy and smoothness. Theoretically, we demonstrate the robustness of SSBE through detailed stability analyses in Section~\ref{sec::Stability}, ensuring that \( H^1 \) errors remain explicitly bounded by the proposed loss function.

Thirdly, our quantitative generalization error estimates in Section~\ref{sec::GeneralizationBound} rigorously quantify the relationship between population and empirical losses, providing clear insights into the method’s reliability under practical, finite-sample scenarios. The derived \textit{a priori} error bounds indicate that the SSBE approach maintains robust convergence even when data samples are limited, thereby extending the method's applicability to real-world complex PDE problems.

Moreover, extensive numerical experiments presented in Section~\ref{sec:example} validate our theoretical predictions. SSBE consistently achieves lower relative \( L^2 \) and \( H^1 \) errors compared to conventional PINNs across various neural network architectures and PDE types, including Poisson, heat, and nonlinear elliptic equations and \textcolor{black}{over irregular domains such as intrinsic boundary parametrization and porous medium}. These results demonstrate that our method significantly enhances both the accuracy and stability of PDE solutions.

In summary, the SSBE framework represents a substantial advancement in neural network-based PDE solving techniques. By explicitly incorporating boundary regularity constraints, our approach ensures robust and accurate \( H^1 \) convergence, addressing critical limitations of existing PINN methodologies. Future research directions include extending SSBE to more complex PDE systems, exploring its applicability to hyperbolic equations, and further optimizing neural network architectures to maximize computational efficiency and accuracy. Additionally, the exploration of adaptive boundary segmentation methods to enhance the precision and efficiency of gradient computations presents an exciting avenue for future work.
\section*{Acknowledgement}
The authors gratefully acknowledge the reviewers for their valuable and insightful comments that significantly improved this work. This work of T.L. is sponsored by the National Key R\&D Program of China Grant No. 2022YFA1008200 (T. L.), we also thank Shanghai Institute for Mathematics and Interdisciplinary Sciences (SIMIS) for their financial support. This research was funded by SIMIS under grant number SIMIS-ID-2025-ST (T. L.). The work of Y.X. was supported by the Project of Hetao Shenzhen-HKUST Innovation Cooperation Zone HZQB-KCZYB-2020083.
	
\newpage
\appendix

\section{Embedding Theorem: $W^{k,p}(\partial\Omega)\to W^{1,q}(\Omega)$}\label{sec::Extension}
\setcounter{equation}{0}
\setcounter{lemma}{0}
\setcounter{prop}{0}
\setcounter{remark}{0}
\setcounter{theorem}{0}
\renewcommand{\theequation}{A\arabic{equation}}
\renewcommand{\thelemma}{A\arabic{lemma}}
\renewcommand{\theprop}{A\arabic{prop}}
\renewcommand{\thetheorem}{A\arabic{theorem}}
\renewcommand{\theremark}{A\arabic{remark}}

\begin{lemma}\label{lem::ExtensionCtrol}
    For a given function $G(x,t) \in L_{\mathrm{loc}}^{1}\left(\sR^{d-1}\times \sR^+\right), g \in L^{p}\left(\sR^{d-1}\right)$ with $ p>1, d \geq 2$ satisfying
    \begin{equation}\label{eq::ExtensionDimension}
        \Abs{G(x, t)} \leq C_1 \frac{1}{m(\sB^{d}(x, t))}\int_{\sB^{d}(x, t)}\Abs{g(y)} \diff{y}, \quad \text { for }(x, t) \in \sR^{d-1}\times \sR^+,
    \end{equation}
    where $\sB^{d-1}(x,t)$ means the ball in $\sR^{d-1}$ with center at $x$ and radius $t$, then
    \begin{equation}\label{eq::ExtensionControl}        
        \norm{G}_{L^{q}\left(\sR^{d-1}\times\sR^+\right)} \leq C_2\norm{g}_{L^{p}\left(\sR^{d-1}\right)}, \quad \text { where } q=p+\frac{1}{d-1},    
    \end{equation}
    where $C_2$ is a   constant depending on $C_1,\ d,\ p.$
    \end{lemma}
\begin{proof}
    Since the right-hand side of the given inequality~\eqref{eq::ExtensionDimension} is bounded, up to the constant $C$, by the maximal function $\mathcal{M} g(x):=\sup_{x\in B}\frac{1}{\Abs{B}}\int_{B}\abs{g(x)}\diff{x}$, we have
    \begin{equation}\label{eq::ControlInBall}
        \int_{0}^{r}\abs{G(x, t)}^{q} d t \leq Cr(\mathcal{M} g)^{q}(x), \quad \text { for } r>0.
    \end{equation}
    On the other hand, the inequality~\eqref{eq::ExtensionDimension}
    \begin{equation}
        \Abs{G(x, t)} \leq C t^{1-d}\left(\int_{B^{d-1}(x, t)}\Abs{g(y)}^{p} \diff{y}\right)^{1 / p} \leq C t^{1-d}\norm{g}_{L^p(\Omega)},
    \end{equation}
    yields
    \begin{equation}\label{eq::ControlOutBall}
        \int_{r}^{\infty}\Abs{G(x, t)}^{q} d t \leq C\norm{g}_{L^p(\Omega)}^{q} r^{1-(d-1)q}, \quad \text { for } r>0.
    \end{equation}
    Choose $r>0$ such that
    \begin{equation*}
        r(\mathcal{M} g)^{q}(x)=\norm{g}_{L^p(\Omega)}^{q} r^{1-(d-1)q}, \quad \text { i.e., } \quad r=\left(\frac{\norm{g}_{L^p(\Omega)}}{\mathcal{M} g(x)}\right)^{\frac{1}{d-1}}.
    \end{equation*}
    Combining~\eqref{eq::ControlInBall} and~\eqref{eq::ControlOutBall} lead to
    \begin{equation*}
    \int_{0}^{\infty}\abs{G(x, t)}^{q} \diff{t} \leq 2C\norm{g}_{L^p(\Omega)}^{\frac{1}{d-1}}(\mathcal{M} g(x))^{q-\frac{1}{d-1}},
    \end{equation*}
     by letting $q = p+\frac{1}{d-1}$and Fubini's theorem along with boundedness of the maximal operator in $L^{p}, p>1$ i.e. $\norm{\mathcal{M}g}_{L^p(\Omega)}\le C\norm{g}_{L^p(\Omega)}$ yields
    \begin{equation*}
        \int_{\sR_{+}^{n+1}}\Abs{G(x, t)}^{q} \diff{t} \diff{x} \leq C\norm{g}_{L^p(\Omega)}^{1 / n}\norm{g}_{L^p(\Omega)}^{p}=C\norm{g}_{p}^{q} .
    \end{equation*}
    
\end{proof}

\begin{prop}\label{lem::ExtensionEmbedding}
    For $d \geq 2$ and $p>1$, there is a bounded linear extension operator
    \begin{equation*}
        E: W^{1, p}\left(\sR^{d-1}\right) \rightarrow W^{1, q}\left(\sR^{d-1}\times \sR^+\right), \quad \text { where } q=p+\frac{1}{d-1} .
    \end{equation*}
\end{prop}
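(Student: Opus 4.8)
The plan is to build $E$ by mollifying $f$ at a length scale equal to the height $t$ above the boundary hyperplane, and then to reduce each of the three quantities $Ef$, $\nabla_x Ef$, $\partial_t Ef$ to the pointwise average hypothesis~\eqref{eq::ExtensionDimension} of Lemma~\ref{lem::ExtensionCtrol}. Concretely, I would fix $\phi\in C_c^\infty(\sR^{d-1})$ with $\phi\ge 0$, $\int_{\sR^{d-1}}\phi=1$, and $\operatorname{supp}\phi\subseteq \sB^{d-1}(0,1)$, set $\phi_t(z):=t^{-(d-1)}\phi(z/t)$, and define
\[
    Ef(x,t):=(f*\phi_t)(x)=\int_{\sR^{d-1}} f(x-z)\,\phi_t(z)\,\diff{z}.
\]
Linearity is immediate, and $Ef(\cdot,t)\to f$ as $t\to 0^+$ recovers the correct trace, so the entire task reduces to the three $L^q$ bounds below, each obtained by checking the structural inequality~\eqref{eq::ExtensionDimension} and invoking Lemma~\ref{lem::ExtensionCtrol}.

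The first two estimates are routine. Since $0\le \phi_t\le C\,t^{-(d-1)}\mathbf{1}_{\sB^{d-1}(0,t)}$, I get $\Abs{Ef(x,t)}\le C\,\frac{1}{m(\sB^{d-1}(x,t))}\int_{\sB^{d-1}(x,t)}\Abs{f}$, which is exactly~\eqref{eq::ExtensionDimension} for the pair $(Ef,f)$; Lemma~\ref{lem::ExtensionCtrol} then yields $\norm{Ef}_{L^q(\sR^{d-1}\times\sR^+)}\le C\norm{f}_{L^p(\sR^{d-1})}$. For the tangential gradient I would push the derivative onto $f$, so that $\nabla_x Ef=(\nabla f)*\phi_t$ obeys the same average bound with $\Abs{\nabla f}$ in place of $\Abs{f}$; since $\nabla f\in L^p$, Lemma~\ref{lem::ExtensionCtrol} gives $\norm{\nabla_x Ef}_{L^q}\le C\norm{\nabla f}_{L^p}$.

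The hard part will be the normal derivative $\partial_t Ef$, since there the derivative cannot be thrown onto $f$. Writing $\partial_t Ef(x,t)=\int f(x-z)\,\partial_t\phi_t(z)\,\diff{z}$ and computing $\partial_t\phi_t(z)=t^{-d}\,\Theta(z/t)$ with $\Theta(w):=-\bigl[(d-1)\phi(w)+w\cdot\nabla\phi(w)\bigr]$ bounded and supported in $\sB^{d-1}(0,1)$, the decisive point is the cancellation $\int\partial_t\phi_t=\partial_t\!\int\phi_t=\partial_t 1=0$. This lets me subtract the ball average $\bar f_{x,t}:=\frac{1}{m(\sB^{d-1}(x,t))}\int_{\sB^{d-1}(x,t)}f$ at no cost and bound $\Abs{\partial_t Ef(x,t)}\le C\,t^{-d}\int_{\sB^{d-1}(x,t)}\Abs{f-\bar f_{x,t}}$. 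Invoking the Poincaré--Wirtinger inequality $\int_{\sB^{d-1}(x,t)}\Abs{f-\bar f_{x,t}}\le C\,t\int_{\sB^{d-1}(x,t)}\Abs{\nabla f}$ together with $m(\sB^{d-1}(x,t))\sim t^{d-1}$ converts the bound into $\Abs{\partial_t Ef(x,t)}\le C\,\frac{1}{m(\sB^{d-1}(x,t))}\int_{\sB^{d-1}(x,t)}\Abs{\nabla f}$, again of the form~\eqref{eq::ExtensionDimension} with $\Abs{\nabla f}$ as the input. A final application of Lemma~\ref{lem::ExtensionCtrol} delivers $\norm{\partial_t Ef}_{L^q}\le C\norm{\nabla f}_{L^p}$, and summing the three estimates establishes that $E$ is a bounded linear operator $W^{1,p}(\sR^{d-1})\to W^{1,q}(\sR^{d-1}\times\sR^+)$ with $q=p+\tfrac{1}{d-1}$. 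The only genuine obstacle is packaging the $\partial_t$ term so that the exponent gain of Lemma~\ref{lem::ExtensionCtrol} applies, and the combination of the mean-zero cancellation with Poincaré is precisely what makes $\Abs{\nabla f}$ the correct $L^p$ datum.
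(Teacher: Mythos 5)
Your construction of $E$ and the overall reduction are exactly the paper's: mollify at scale $t$ with $\varphi_t=t^{1-d}\varphi(\cdot/t)$, verify the pointwise-average hypothesis~\eqref{eq::ExtensionDimension} for $Ef$ and for each first derivative, and invoke Lemma~\ref{lem::ExtensionCtrol} three times. The zeroth-order and tangential estimates coincide with the paper's. Where you genuinely diverge is the normal derivative: the paper writes $Ef(x,t)=\int_{\sB^{d-1}}f(x-ty)\varphi(y)\,\mathrm{d}y$ so that $\partial_t$ falls directly on $f(x-ty)$, producing $\int_{\sB^{d-1}}(\nabla f)(x-ty)\cdot(-y)\varphi(y)\,\mathrm{d}y$, which is immediately dominated by the ball average of $\Abs{\nabla f}$ because $\Abs{y}\le 1$; you instead differentiate the kernel, exploit the cancellation $\int\partial_t\varphi_t=0$ to subtract the ball mean, and then apply Poincar\'e--Wirtinger to trade $f-\bar f_{x,t}$ for $t\Abs{\nabla f}$. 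Both routes land on the same inequality $\Abs{\partial_t Ef(x,t)}\le C\,m(\sB^{d-1}(x,t))^{-1}\int_{\sB^{d-1}(x,t)}\Abs{\nabla f}$ and are correct. The paper's version is shorter and avoids Poincar\'e entirely (at the cost of having to justify differentiating under the integral with the derivative landing on the Sobolev function, e.g.\ by density of smooth functions); your version is slightly longer but makes the mechanism transparent --- the mean-zero structure of $\partial_t\varphi_t$ is precisely what lets one unit of smoothness of $f$ absorb the extra factor $t^{-1}$ --- and it generalizes more readily to kernels for which the derivative cannot be transferred to $f$.
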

\begin{proof}
     Let $\varphi \in C_0^{\infty}(\sB^{d-1})$ satisfying $\int_{\sB^{d-1}}\varphi\diff{x} =1,$ $\sB^{d-1}\subset \sR^{d-1}$ is the unit ball, and define $\varphi_t=t^{1-d}\varphi(x/t).$ For $g\in L_{\mathrm{loc}}^1(\sR^{d-1}),$ define a linear operator $E$ as follow:
     \begin{equation}\label{eq::DefExtension}
         (Eg)(x,t)= (f*\varphi_t)(x)= \int_{\sB^{d-1}}g(x-ty)\varphi(y)\diff{y},\ (x,t)\in \sR^{d-1}\times\sR^+,
     \end{equation}
     proposition of convolution guarantees that: $(Eg)\in C_c^{\infty}(\sR^{d-1}\times\sR^+)$ , and by change of variables
     \begin{equation}\label{eq::Control4Extension}
         \abs{(Eg)(x,t)}= \abs{\int_{\sB^{d-1}(x,t)}g(y)\varphi_t(x-y)\D y}\le C_3\frac{1}{m(\sB^{d-1}(x,t))}\int_{\sB^{d-1}(x,t)}\abs{g(y)}\D y,
     \end{equation}
     where $C_3$ is a constant that depends only on $\varphi$ and $d$.
     
     By Lemma~\ref{lem::ExtensionCtrol}, for any $g\in L^{p}(\sR^{d-1}),$ there is 
     \begin{equation}
         \norm{Eg}_{L^q(\sR^{d-1}\times\sR^+)}\le C \norm{g}_{L^p(\sR^{d-1})}.
     \end{equation}
     Moreover, for any $g\in W^{1,p}(\sR^{d-1}),$ there is
     \begin{equation}\label{eq::ZeroOrderControl}
         \Abs{\nabla Eg(x,t)}=\Abs{\nabla_{(x,t)}(Eg)(x,t)}\le C\frac{1}{m(\sB^{d-1}(x,t))} \int_{\sB^{d-1}(x,t)}\abs{\nabla g(y)}\D y.
     \end{equation}
     This is due to
     \begin{equation}\label{eq::FirstOrderControl}
         \begin{aligned}
              \Abs{D_{\alpha}(Eg)(x,t)}&= \abs{\int_{\sB^{d-1}}(D_{\alpha} g)(x-ty)\varphi(y)\D y} = \abs{\int_{\sB^{d-1}(x,t)}(D_{\alpha} g)(y)\varphi_t(x-y)\D y}\\ 
              &\le C\frac{1}{m(\sB^{d-1}(x,t))}\int_{\sB^{d-1}(x,t)}\abs{(D_{\alpha}g)(y)}\D y,\quad \text{for $\alpha\in \{1,\ldots,d-1\}$}, ~\\
              \abs{D_t (Eg)(x,t)}&= \abs{\int_{\sB^{d-1}}(\nabla g)(x-ty)\cdot (-y)\varphi(y)\D y} \le C\frac{1}{m(\sB^{d-1}(x,t))}\int_{\sB^n(x,t)}\abs{\nabla g(y)}\D y\\ &\le C\frac{1}{m(\sB^{d-1}(x,t))}\int_{\sB^{d-1}(x,t)}\sum_{i}\abs{(D_{i}g)(y)}\D y,
         \end{aligned}
     \end{equation}
     since $\Abs{y}\le 1$ and $|\cdot|$ means the $l^2$-norm of a function vector. Combining equations~\eqref{eq::ZeroOrderControl} and~\eqref{eq::FirstOrderControl} and Lemma~\ref{lem::ExtensionCtrol}
     \begin{equation}\begin{aligned}
          \norm{ Eg(x,t)}_{W^{1,q}(\sR^{d-1}\times\sR^+)}^q&\le C \left(\norm{ g}_{L^p(\sR^{d-1})}^q+\sum_{\alpha}\norm{D_{\alpha} g}_{L^p(\sR^{d-1})}^q +\norm{\sum_{\alpha}\abs{D_{\alpha} g}}_{L^p(\sR^{d-1})}^q\right)\le C\norm{g}_{W^{1,p}(\sR^{d-1})}^q,
     \end{aligned}
     \end{equation}
     where $C$ a constant different from line to line. And we complete the proof.
\end{proof}

\begin{theorem}[Extension Theorem: $W^{1,p}(\partial\Omega)\to W^{1,q}(\Omega)$]\label{thm::Extension}
    Suppose $\Omega\in \sR^{d}$ belongs to the class $\fC^{0,1}$ with $d\ge 2$, and there is an $g$ defined on $\partial \Omega$ such that $g\in W^{1,p}(\partial \Omega)$, then there are a bounded linear operator
    \begin{equation*}
        T: W^{1, p}\left(\partial \Omega\right) \rightarrow W^{1, q} \left(\Omega\right), \quad \text { where } q=p+\frac{1}{d-1} .
    \end{equation*}
    such that $Tg \lfloor_{\partial \Omega} = g$ and a constant $C$ depending only on $\Omega, d$ such that
    \begin{equation}
        \norm{Tg}_{W^{1,p}(\Omega)}\le C\norm{g}_{W^{1,q}(\partial\Omega)}.
    \end{equation}
\end{theorem}
\begin{proof}[Proof of Theorem~\ref{thm::Extension}]
    In the following proof, we abuse the notation of $x^0\in \partial\Omega$ as $\fF_{r}^{-1} x^0=:\fX_{r}^{0}$, and $g(x^0)$ as $_{r}g(x_r').$
    
    For given $x^0\in \partial \Omega,$ and suppose firstly $\partial \Omega$ is flat near $x^0,$ lying in the plane with $x_{d}=0,$ then there is a ball $B$ with radius $r$ center at $x^0$ such that:
    \begin{equation*}
        \begin{cases}
                B^+:=B\bigcap \{x_{d}\ge 0\} \subseteq \overline{\Omega},\\
                B^-:=B\bigcap \{x_{d}< 0\} \subseteq \sR^{d}-\overline{\Omega}.
        \end{cases}
    \end{equation*}
    Choose $\zeta \in C_0^{\infty}(B\bigcap \{x_{n+1}= 0\}),$ we then have that $\zeta g\in W^{1,p}(B\bigcap \{x_{n+1}= 0\}).$
    
    Consider \begin{equation}
        T(\zeta g)(x,t) =\int_{\sB^n}(\zeta g)(x-ty)\varphi(y)\D y, 
    \end{equation}
    then by Proposition~\ref{lem::ExtensionEmbedding}, we have
    \begin{equation}
        \norm{T(\zeta g)}_{W^{1,q}(B^+)}\le \norm{T(\zeta g)}_{W^{1,q}(\sR^d\times\sR^+)}\le C\norm{\zeta g}_{W^{1,p}(B\bigcap \{x_{n+1}= 0\})}.
    \end{equation}
    
    Then consider the case that $\partial \Omega$ is not necessarily flat near $x^0$, with $\fF_{r}x^0$ belong to some $\Gamma_r$. Note that there exists a $C^1-$mapping $\Phi_r$ with inverse $\Psi_r$ such that $\Phi_r$ straightens out $\partial \Omega$ near $\fF_{r}x^0.$
    
    We may write $\fY_r=\Phi_r(\fX_{r})$ and $\fX_{r}=\Psi_r(\fY_r)$, $\Tilde{g}(\fY_r) = g(\Psi_r(\fY_r)),$ choose the $\Phi_r(U_r)$ under straighten coordinate and $\Tilde{\zeta} \in C_0^{\infty}(B\bigcap \{y_{r_d}=0\}),$ then by our above discussion
    \begin{equation}
        \norm{E(\Tilde{\zeta}\Tilde{g})}_{W^{1,q}(\Phi(U_{r}^{+}))}\le C\norm{\Tilde{\zeta}\Tilde{g}}_{W^{1,p}(\Phi_r(U_{r}^{+}) \bigcap \{y_{d}=0\})},
    \end{equation}
     converting back to $\fX_{r}-$coordinate, with $\zeta(\fX_{r}):=\tilde{\zeta}(\Phi(\fX_{r}))$, there is
    \begin{equation}
        \norm{E(\zeta g)}_{W^{1,q}(U_{r}^{+})}\le C\norm{\zeta g}_{W^{1,p}(\Gamma_{r})}.
    \end{equation}
    Note that $\{\Gamma_r\}_{r=1}^{m}$ is open cover of $\partial\Omega$ and consider $\{\zeta_r\}_{r=i}^{m}$ the associated partition of unity of $\{\Gamma_r\}_{r=1}^{m}$ on $\partial \Omega$. Define the extension operator as $$E(\zeta_r g)(x)= \int_{\sB^n}(\zeta_r g)(x-ty)\varphi(y)\D y,$$ on $\sR^{d-1}\times\sR^+.$ By proposition~\ref{lem::ExtensionEmbedding},
    \begin{equation}
        \norm{E(\zeta_r g)}_{W^{1,q}(U_r^+)}^p\le C\norm{\zeta_r g}_{W^{1,p}(\partial \Gamma_r)}^p,
    \end{equation}
    we thus have
    \begin{equation}\label{eq::EmbeddingControl}
         \norm{E(\zeta_r g)}_{W^{1,q}(U_r^+)}^p\le C\sum_{r=1}^{l}\norm{\zeta_r g}_{W^{1,p}(\partial \Gamma_r)}^p.
    \end{equation}
    Note that $\{\zeta_r\}_{r=1}^{m}$ for $r\in\{1\ldots,m\}$ satisfy
    
         $$1.\ \sum_{r=1}^{l}\zeta_r =1,\  \text{on $\partial \Omega$},\quad 2.\ 0\le \zeta_r\le 1,\  \text{on $\partial \Omega$},\quad 3.\ \zeta_r \in C_c^{\infty}(\partial \Omega), $$
         consider then
         \begin{equation}
             \norm{\sum_{r=1}^{l}\zeta_r g}^p_{W^{1,p}(\partial \Omega)} =\norm{\sum_{r=1}^{,}\zeta_r g}^p_{L^{p}(\partial \Omega)}+\sum_{\alpha=1}^{d-1}\norm{\sum_{r=1}^{l}(\zeta_rD_{\alpha}g+g D_{\alpha}\zeta_r)}^p_{L^{p}(\partial \Omega)}, 
         \end{equation}
         and we exactly have
         \begin{equation}\label{eq::DeompositionInequality}
              \norm{g}^p_{L^{p}(\partial \Omega)}=\norm{\sum_{r=1}^{l}\zeta_r g}^p_{L^{p}(\partial \Omega)} \ge\sum_{r=1}^{l}\norm{\zeta_r g}_{L^p(\Gamma_r)}^p, 
         \end{equation}
    where the last inequality results from  $\sum_{r=1}^{l} \zeta_r^p(x)\le 1$ for any $x\in \partial \Omega.$
    
    For the derivative part:
    \begin{equation}
    \begin{aligned}
         \norm{\sum_{r=1}^{l}(\zeta_rD_{\alpha}g+gD_{\alpha}\zeta_r)}^p_{L^{p}(\partial \Omega)}&=\norm{\sum_{r=1}^{l}\zeta_rD_{\alpha}g+g\sum_{r=1}^{l}D_{\alpha}\zeta_r}^p_{L^{p}(\partial \Omega)}\\&=\norm{\sum_{r=1}^{l}\zeta_rD_{\alpha}g}^p_{L^{p}(\partial \Omega)}=\norm{D_{\alpha}g}^p_{L^{p}(\partial \Omega)},
    \end{aligned}
    \end{equation}
    where the second equality above results from $\sum_{r=1}^{l}\zeta_r=1$ on $\partial \Omega.$ And by the inequality~\eqref{eq::DeompositionInequality}
    \begin{equation}\label{eq::FirstOrder4Pou}
        \begin{aligned}
        \sum_{r=1}^m\norm{\zeta_r D_{\alpha}g}_{L^p(\Gamma_r)}^p&\le \norm{\sum_{r=1}^{l}\zeta_r D_{\alpha}g}^p_{L^{p}(\partial \Omega)},
        \\
         \sum_{r=1}^m\norm{gD_{\alpha}\zeta_r }_{L^p(\Gamma_r)}^p&\le C\norm{g}_{L^p(\Omega)}^p,
        \end{aligned}
    \end{equation}
    where $C$ is a constant depends on $\{\zeta_r\}_{r=1}^m.$
    
    Combining the~\eqref{eq::DeompositionInequality} and~\eqref{eq::FirstOrder4Pou}, we thus have:
    \begin{equation}\label{eq::PartialToPartial}
        \sum_{r=1}^{l}\norm{\zeta_r g}_{W^{1,p}(\Gamma_r)}^p\le (1+C)\norm{ g}^p_{W^{1,p}(\partial \Omega)}.
    \end{equation}
    Notice then $\{U_r\}_{r=1}^{m}\subseteq \sR^{d}$ covers $\partial \Omega$ and we hence can choose a $U_0 \subseteq \Omega$ such that $\Omega \subseteq \bigcup_{r=0}^{m}U_r,$ consider then associated partition of unity as $\{\xi_r\}_{r=0}^{m}$ on $\Omega.$ Consider $\xi_rE(\zeta_r g)$ for $r\in \{1,\dots,l\}$ and define $Eg = 0$ and $\zeta_0=0$ on $U_0.$ Thus
    \begin{equation}\label{eq::ZeroOrderControld+1}
        \norm{\sum_{r=0}^{m}\xi_rE(\zeta_r g)}_{L^q(\Omega)}^p\le \norm{\sum_{r=1}^{l}E(\zeta_r g)}_{L^q(U_r^+)}^p\le \sum_{r=1}^{l}\norm{E(\zeta_r g)}_{L^q(U_r^+)}^p ,
    \end{equation}
    and that
    \begin{equation}\label{eq::FirstOrderControld+1}
        \begin{aligned}
             \norm{D_{\alpha}\sum_{r=0}^{m}\xi_rE(\zeta_r g)}_{L^q(\Omega)}^p&= \norm{\sum_{r=0}^{m}\xi_rD_{\alpha}E(\zeta_r g)}_{L^q(\Omega)}^p\le \sum_{j=1}^{N}\norm{D_{\alpha}E(\zeta_r g)}_{L^q(U_r^+)}^p,
        \end{aligned}
    \end{equation}
    combining~\eqref{eq::ZeroOrderControld+1} and~\eqref{eq::FirstOrderControld+1}, 
    \begin{equation}\label{eq::DomainInequality}
        \norm{\sum_{r=0}^{l}\xi_rE(\zeta_r g)}_{W^{1,q}(\Omega)}^p\le C\sum_{r=1}^{l}\norm{E(\zeta_r g)}_{W^{1,q}(U_r^+)}^p,
    \end{equation}
    where $C$ is a constant depends on $N$ only.
    
    By equations~\eqref{eq::EmbeddingControl},~\eqref{eq::PartialToPartial} and~\eqref{eq::DomainInequality}, we then have
    \begin{equation}
        \norm{\sum_{r=0}^{m}\xi_rE(\zeta_r g)}_{W^{1,q}(\Omega)}^p \le C\norm{ g}^p_{W^{1,p}(\partial \Omega)}.    
    \end{equation}
    By defining
    \begin{equation*}
        Tg=\sum_{r=0}^{m}\xi_rE(\zeta_r g) \in W^{1,q}(\Omega),
    \end{equation*}
    we have $\norm{Eg}_{W^{1,q}(\Omega)}\le C\norm{g}_{W^{1,p}(\partial\Omega)}$ and $Ef\lfloor_{\partial \Omega} =g$. Thus we complete the proof.
\end{proof}

\begin{remark}\label{rmk::EmbbingH1partialToH1}
    A natural consequence is that suppose $\Omega\in \sR^{d}$ belongs to the class $\fC^{0,1}$, and there is an $g$ defined on $\partial \Omega$ such that $g\in H^1(\partial \Omega)$, then there is a bounded linear operator
    \begin{equation*}
        T: H^1\left(\partial \Omega\right) \rightarrow H^1 \left(\Omega\right),
    \end{equation*}
such that $Tg \lfloor_{\partial \Omega} = g$. We obtain this result immediately by letting $p=2$ and the fact that $W^{1,2+\frac{1}{d-1}}(\Omega)$ is a subspace of $H^1(\Omega)$.
\end{remark}

\section{Rademahcer Complexity and Contraction Lemma}
\setcounter{equation}{0}
\setcounter{definition}{0}
\setcounter{prop}{0}
\setcounter{theorem}{0}
\setcounter{lemma}{0}
\renewcommand{\theequation}{B\arabic{equation}}
\renewcommand{\thedefinition}{B\arabic{definition}}
\renewcommand{\thelemma}{B\arabic{lemma}}
\renewcommand{\thetheorem}{B\arabic{theorem}}
\renewcommand{\theremark}{B\arabic{remark}}

\begin{definition}[Rademacher complexity]\label{def::Rademacher}
    Given a set $S=\{z_i\}_{i=1}^{n}$ on a domain $\fZ$ and a real-valued function class $\fF$ defined of $\fZ$. The Rademacher complexity of $\fF$ on $S$ is
    \begin{equation}
        \operatorname{Rad}_{S}(\fF)=\frac{1}{n}\Exp_{\vtau}\left[\sup_{f\in \fF}\sum_{i=1}^n \tau_i f(z_i)\right],
    \end{equation}
    where $\vtau=\{\tau_i\}_{i=1}^{n}$ are independent variable set sampled from Rademahcer distritution, i.e. $\Prob(\tau_i=1)=\Prob(\tau_i=-1)=\frac{1}{2}.$ More generally, for a given vector set $A\subset \sR^d$, define
    \begin{equation}
        \operatorname{Rad}(A)=\frac{1}{n}\Exp_{\vtau}\left[\sup_{\va\in A}\sum_{i=1}^{n}\tau_ia_i\right].
    \end{equation}
\end{definition}

\begin{lemma}[Contration lemma, rephrased from Lemma 26.9 in~\cite{Shwartz2014Understanding}]\label{lem::Contraction}
    For each $i\in \{1,\ldots,n\}$, let $\phi_{i}:\sR\to\sR$ be function with Lipschitz constant $\rho$. For $\va=(a_1,\ldots,a_n)\in \sR^d$, let $\vphi(\va)=(\phi_i(a_i))_{i=1}^{n}\in \sR^d$, and $\vphi\circ A=\{\vphi(\va)\big| \va\in A\}$, then
    \begin{equation}
        \operatorname{Rad}(\vphi\circ A)\le \rho \operatorname{Rad}(A).
    \end{equation}
\end{lemma}

\begin{lemma}[Rademacher complexity for linear predictors, rephrased from Lemma 26.20 in~\cite{Shwartz2014Understanding}]\label{lem::Rademacher4LinearPredictor}
    Let $S=\{x_i\}_{i=1}^{n}\subseteq \sR^d$ and  $\fF:=\{\vu^{\T}x \big| \vu,x\in \sR^d, \Abs{\vu}\le 1\}$. The Rademacher complexity of $\fF$ on $S$ satisfies
    \begin{equation}
        \operatorname{Rad}_{S}(\fF)\le \frac{\max_{i}\Abs{x_i}}{\sqrt{n}}.
    \end{equation}
\end{lemma}

\begin{theorem}[Generalization Gap, rephrased from Theorem 26.5 in~\cite{Shwartz2014Understanding}]\label{thm::Rade&Generalization}
    Suppose that for all $f\in \fF$ and all $z\in \fZ$, there is constant $B>0$ such that $0\le f(z)\le B$. Then for any $\delta\in (0,1)$, with probability at least $1-\delta$ over the choice of $n$ i.i.d random sampled set $S=\{z_i\}_{i=1}^{n}\subset \fZ$, we have 
    \begin{equation}
        \sup_{f\in\fF}\Abs{\frac{1}{n}\sum_{i=1}^{n}f(z_i)-\Exp_{z}f(z)}\le 2\operatorname{Rad}_{S}(\fF)+4B\sqrt{\frac{2\ln (4/\delta)}{n}}.
    \end{equation}
\end{theorem}

\section{Technical Details for Generalization Error Estimate}\label{sec::Detail4GeneralzationError}
\setcounter{equation}{0}
\setcounter{definition}{0}
\setcounter{theorem}{0}
\setcounter{lemma}{0}
\renewcommand{\theequation}{C\arabic{equation}}
\renewcommand{\thedefinition}{C\arabic{definition}}
\renewcommand{\thetheorem}{C\arabic{theorem}}
\renewcommand{\thelemma}{C\arabic{lemma}}

\begin{proof}[Proof of Lemma~\ref{lem::RademacherComplexity4BVP}]
    Firstly we estimate the Rademacher complxity of $\operatorname{Rad}(\fF_{Q})$. 
    
    By defining $\hat{\vw}_k=\frac{\vw_k}{\abs{\vw_k}}$ and $\vtau=\{\tau^{1},\ldots,\tau^{n_{\Omega}}\}$ which drawn from Radmacher distribution, i.e. $\Prob(\tau^i=1)=\Prob(\tau^i=-1)=\frac{1}{2}$ for $i\in \{1,\ldots,n_{\Omega}\}$, then
    \begin{equation}
        \begin{aligned}
            n_{\Omega}\operatorname{Rad}(\fF_{Q})&=\Exp_{\vtau}\left[\sup_{\norm{\theta}_{\fP}<Q}\sum_{i=1}^{n_{\Omega}}\tau^{i}\sum_{k=1}^{m}a_k\left[\vw_k^{\T}A(x^i)\vw_k\sigma''(\vw_k^{\T}x^i)+\hat{\vb}^{\T}(x^i)\vw_k\sigma'(\vw^{\T}x^i)+c(x^i)\sigma(\vw^{\T}x^i)\right]\right]\\
            &\le =\Exp_{\vtau}\left[\sup_{\norm{\theta}_{\fP}<Q}\sum_{i=1}^{n_{\Omega}}\tau^{i}\sum_{k=1}^{m}a_k\left[\vw_k^{\T}A(x^i)\vw_k\sigma''(\vw_k^{\T}x^i)\right]\right]\\
            &~~~+\Exp_{\vtau}\left[\sup_{\norm{\theta}_{\fP}<Q}\sum_{i=1}^{n_{\Omega}}\tau^{i}\sum_{k=1}^{m}a_k\left[\hat{\vb}^{\T}(x^i)\vw_k\sigma'(\vw^{\T}x^i)\right]\right]+\Exp_{\vtau}\left[\sup_{\norm{\theta}_{\fP}<Q}\sum_{i=1}^{n_{\Omega}}\tau^{i}\sum_{k=1}^{m}a_k\left[c(x^i)\sigma(\vw^{\T}x^i)\right]\right]\\
            &=: I_1+I_2+I_3.
        \end{aligned}
    \end{equation}
    Estimate the term $I_1$
    \begin{equation}
        \begin{aligned}
            I_1&=\Exp_{\vtau}\left[\sup_{\norm{\theta}_{\fP}<Q}\sum_{i=1}^{n_{\Omega}}\tau^{i}\sum_{k=1}^{m}a_k\abs{\vw_k}^3\left[\hat{\vw}_k^{\T}A(x^i)\hat{\vw}_k\sigma''(\hat{\vw}_k^{\T}x^i)\right]\right]\\
            &\le \Exp_{\vtau}\left[\sup_{\norm{\theta}_{\fP}<Q}\sum_{k=1}^{m}\Abs{a_k}\abs{\vw_k}^3\Abs{\sum_{i=1}^{n_{\Omega}}\tau^{i}\left[\hat{\vw}_k^{\T}A(x^i)\hat{\vw}_k\sigma''(\hat{\vw}_k^{\T}x^i)\right]}\right]\\
            &\le \Exp_{\vtau}\left[\sup_{\norm{\theta}_{\fP}<Q,\Abs{\vp}=\Abs{\vq}=\Abs{\vu}=1}\sum_{k=1}^{m}\Abs{a_k}\abs{\vw_k}^3\Abs{\sum_{i=1}^{n_{\Omega}}\tau^{i}\left[\vp^{\T}A(x^i)\vq\sigma''(\vu^{\T}x^i)\right]}\right]\\
            &\le Q \Exp_{\vtau}\left[\sup_{\Abs{\vp}=\Abs{\vq}=\Abs{\vu}=1}\sum_{\alpha,\beta=1}^{d}\Abs{\vp_{\alpha}}\Abs{\vq_{\beta}}\Abs{\sum_{i=1}^{n_{\Omega}}\tau^{i}\left[A_{\alpha\beta}(x^i)\sigma''(\vu^{\T}x^i)\right]}\right]\\
            &\le Q\sum_{\alpha,\beta=1}^{d}\Exp_{\vtau}\left[\sup_{\Abs{\vu}\le 1}\Abs{\sum_{i=1}^{n_{\Omega}}\tau^{i}\left[A_{\alpha\beta}(x^i)\sigma''(\vu^{\T}x^i)\right]}\right]\\
            &\le Q\sum_{\alpha,\beta=1}^{d}\left(\Exp_{\vtau}\left[\sup_{\Abs{\vu}\le 1}\sum_{i=1}^{n_{\Omega}}\tau^{i}\left[A_{\alpha\beta}(x^i)\sigma''(\vu^{\T}x^i)\right]\right]+\Exp_{\vtau}\left[\sup_{\Abs{\vu}\le 1}\sum_{i=1}^{n_{\Omega}}-\tau^{i}\left[A_{\alpha\beta}(x^i)\sigma''(\vu^{\T}x^i)\right]\right]\right)\\
            &=2Q\sum_{\alpha,\beta=1}^{d}\Exp_{\vtau}\left[\sup_{\Abs{\vu}\le 1}\sum_{i=1}^{n_{\Omega}}\tau^{i}\left[A_{\alpha\beta}(x^i)\sigma''(\vu^{\T}x^i)\right]\right],
        \end{aligned}
    \end{equation}
    by applying Lemma~\ref{lem::Contraction} and Lemma~\ref{lem::Rademacher4LinearPredictor} with $\psi^i(y^i)=A_{\alpha\beta}(x^i)\sigma''(y^i)$ with $i\in \{1,\ldots, n_{\Omega}\}$ whose Lipschitz constant is $M$ for $\alpha,\beta\in \{1,\ldots,d\}$, we immediately get
    \begin{equation}
        \Exp_{\vtau}\left[\sup_{\Abs{\vu}\le 1}\sum_{i=1}^{n_{\Omega}}\tau^{i}\left[A_{\alpha\beta}(x^i)\sigma''(\vu^{\T}x^i)\right]\right]\le M \Exp_{\vtau}\left[\sup_{\Abs{\vu}\le 1}\sum_{i=1}^{n_{\Omega}}\tau^{i}\vu^{\T}x^i\right]\le M\sqrt{n_{\Omega}},
    \end{equation}
    hence
    \begin{equation}\label{eq::EstimateofI_1}
        I_1\le 2Q\sum_{\alpha,\beta=1}^{d}\Exp_{\vtau}\left[\sup_{\Abs{\vu}\le 1}\sum_{i=1}^{n_{\Omega}}\tau^{i}\left[A_{\alpha\beta}(x^i)\sigma''(\vu^{\T}x^i)\right]\right]\le 2MQd^2\sqrt{n_{\Omega}}.
    \end{equation}
    Similarly, since $\sigma'(z)=\frac{1}{2}z\sigma''(z)$ and $\sigma'(z)=\frac{1}{6}z^2\sigma''(z)$, we can verify that
    \begin{equation}\label{eq::EstimateofI_2&I_3}
        \begin{aligned}
            I_2&\le MQd\sqrt{n_{\Omega}},\\
            I_3&\le \frac{1}{3}MQ\sqrt{n_{\Omega}}.
        \end{aligned}
    \end{equation}
    combing~\eqref{eq::EstimateofI_1} and~\eqref{eq::EstimateofI_2&I_3} together, we have
    \begin{equation}\label{eq::RademacherfF_Q}
        \operatorname{Rad}(\fF_{Q}) \le \frac{4MQd^2}{\sqrt{n_{\Omega}}}.
    \end{equation}

    Then we estimate the Rademacher complexity of $\operatorname{Rad}({_{r}\fG_{Q}})$ and $\operatorname{Rad}(D_{x_{r_{\alpha}}} {_{r}\fG_{Q}})$ respectively.

    Note that by the definition of $x_r$ and $_{r}\vw_k$, for $i\in\{ 1,\ldots, n_r\}$, there is $\tilde{x}^i\in \partial\Omega$ such that $\vw_{k}^{\T}\tilde{x}^i={_{r}\vw_{k}^{\T}}(x_{r}^{i}-\fB_{r})$, hence
    \begin{equation}
        g(x_{r}^{i},\theta)=a_k\sigma(\vw_{k}^{\T}\tilde{x}^{i}).
    \end{equation}
    combining the estimates of $I_3$, we immediately obtain that
    \begin{equation}\label{eq::Rademacer_rfG_Q}
        \operatorname{Rad}({_{r}\fG_{Q}})\le \frac{1}{3}\frac{Q}{\sqrt{n_r}}.
    \end{equation}
    For $D_{x_{r_{\alpha}}} {_{r}\fG_{Q}}$, we have that for $i\in \{1,\ldots,n_r\}$
    \begin{equation}
        D_{x_{r_{\alpha}}} g({x'_{r}}^{i},\theta)=\sum_{k=1}^{m}a_k({_{r}\vw_{k,\alpha}}+{_{r}\vw_{k,d}}D_{x_{r_{\alpha}}}\gamma_{r}({x'_{r}}^{i}))\sigma'(\vw_{k}^{\T}\tilde{x}^{i}),
    \end{equation}
    hence
    \begin{equation}
        \begin{aligned}
            n_r\operatorname{Rad}(D_{x_{r_{\alpha}}} {_{r}\fG_{Q}})&=\Exp_{\vtau}\left[\sup_{\norm{\theta}_{\fP}<Q}\sum_{i=1}^{n_r}\tau^i\sum_{k=1}^{m}a_k({_{r}\vw_{k,\alpha}}+{_{r}\vw_{k,d}}D_{x_{r_{\alpha}}}\gamma_{r}({x'_{r}}^{i}))\sigma'(\vw_{k}^{\T}\tilde{x}^{i})\right]\\
            &\le \Exp_{\vtau}\left[\sup_{\norm{\theta}_{\fP}<Q}\sum_{i=1}^{n_r}\tau^i\sum_{k=1}^{m}a_k{_{r}\vw_{k,\alpha}}\sigma'(\vw_{k}^{\T}\tilde{x}^{i})\right]\\
            &~~~+\Exp_{\vtau}\left[\sup_{\norm{\theta}_{\fP}<Q}\sum_{i=1}^{n_r}\tau^i\sum_{k=1}^{m}a_k{_{r}\vw_{k,d}}D_{x_{r_{\alpha}}}\gamma_{r}({x'_{r}}^{i})\sigma'(\vw_{k}^{\T}\tilde{x}^{i})\right]\\
            &=: \tilde{I}_1+\tilde{I}_2.
        \end{aligned}
    \end{equation}
    Note that $\fQ_r^{-1}$ is a orthogonal mapping, thus for all $r\in\{1,\ldots,l\}$, $k=\{1,\ldots,m\}$ and $\alpha\in\{1,\ldots,d-1\}$
    \begin{equation}
        \max_{\alpha,d}\{\Abs{{_{r}\vw_{k,\alpha}}},\Abs{{_{r}\vw_{k,d}}}\}\le \Abs{{_{r}\vw_{k}}}=\Abs{\vw_k},
    \end{equation}
    combining that $\sigma'(z)=\frac{1}{2}z\sigma''(z)$, for the term $\tilde{I}_1$, we have
    \begin{equation}
        \begin{aligned}
            \tilde{I}_1&\le\Exp_{\vtau}\left[\sup_{\norm{\theta}_{\fP}<Q}\sum_{k=1}^{m}\Abs{a_k{_{r}\vw_{k,\alpha}}}\Abs{\vw_k}^2\Abs{\sum_{i=1}^{n_r}\tau^i\sigma'(\hat{\vw}_{k}^{\T}\tilde{x}^{i})}\right]\\
            &\le \Exp_{\vtau}\left[\sup_{\norm{\theta}_{\fP}<Q, \Abs{\vu}\le 1}\sum_{k=1}^{m}\Abs{a_k{_{r}\vw_{k,\alpha}}}\Abs{\vw_k}^2\Abs{\sum_{i=1}^{n_r}\tau^i\sigma'(\vu^{\T}\tilde{x}^{i})}\right]\\
            &\le \Exp_{\vtau}\left[\sup_{\norm{\theta}_{\fP}<Q, \Abs{\vu}\le 1}\sum_{k=1}^{m}\Abs{a_k}\Abs{\vw_k}^3\Abs{\sum_{i=1}^{n_r}\tau^i\sigma'(\vu^{\T}\tilde{x}^{i})}\right]\\
            &\le Q\Exp_{\vtau}\left[\sup_{ \Abs{\vu}\le 1}\Abs{\sum_{i=1}^{n_r}\tau^i\sigma'(\vu^{\T}\tilde{x}^{i})}\right],
        \end{aligned}
    \end{equation}
    combining the estimate of $I_2$, we immediately obtain that
    \begin{equation}\label{eq::EstimateofTildeI_1}
        \tilde{I}_1\le Q\sqrt{n_r}.
    \end{equation}
    For the term $\tilde{I}_2$, note that by Definition~\ref{def::spcialdomain}, for all $r\in\{1,\ldots,l\}$, we have $\gamma_{r}({x'_{r}}^{i})\in C^1(\overline{B_{r}^{\kappa}})$, say, bounded by a constant $\widetilde{M}$, $\max_{x'_r\in\overline{B(0,\alpha)},r,\alpha}\{\gamma_r(x'_r),D_{x_{r_{\alpha}}}\gamma_r(x'_r)\}<\widetilde{M}$, thus similarly as the analysis on $\tilde{I}_1$, we have
    \begin{equation}
        \begin{aligned}
            \tilde{I}_2&\le Q\Exp_{\vtau}\left[\sup_{\abs{\vu}\le 1}\Abs{\sum_{i=1}^{n_r}\tau^{i}D_{x_{r_{\alpha}}}\gamma_r({x'_{r}}^{i})\sigma'(\vu^{\T}\tilde{x}^i)}\right],
        \end{aligned}
    \end{equation}
    still combining the estimate of $I_2$, we see
    \begin{equation}\label{eq::EstimateofTildeI_2}
        \tilde{I}_2\le \widetilde{M}Q\sqrt{n_r},
    \end{equation}
    thus~\eqref{eq::EstimateofTildeI_1} and~\eqref{eq::EstimateofTildeI_2} together leads to
    \begin{equation}\label{eq::RademacherD_alpha_rfG_Q}
        \operatorname{Rad}(D_{x_{r_{\alpha}}} {_{r}\fG_{Q}})\le \frac{(\widetilde{M}+1)Q}{\sqrt{n_r}}.
    \end{equation}
    Equation~\eqref{eq::RademacherfF_Q},~\eqref{eq::Rademacer_rfG_Q} and~\eqref{eq::RademacherD_alpha_rfG_Q} together complete the proof.
\end{proof}

\begin{proof}[Proof of Theorem~\ref{thm::ApproximationError}]
    Let $\rho$ be the best representation, i.e. $\norm{(f,g)}_{\fB}=\left(\Exp_{(a,\vw)\sim\rho}\abs{a}^2\abs{\vw}^6\right)^{1/2}$. Set $\theta=\{\frac{1}{m}a_k,\vw_k\}_{k=1}^{m}$ being the parameter with $\{a_k,\vw_k\}_{k=1}^{m}$ independently sampled from $\rho$. By defining
    \begin{equation}\label{eq::Def4RiskFunction}    
        \begin{aligned}
            \fR^{int}_{\Omega}(\theta)&:=\frac{1}{\Abs{\Omega}}\norm{f(x,\theta)-f(x)}_{L^2(\Omega)}^{2},\\
            {_{r}\fR^{bdry}_{B_{r}^{\kappa}}}(\theta)&:=\frac{1}{\Abs{B_{r}^{\kappa}}}\norm{g(x'_r,\tilde{\theta})-{_{r}g(x'_r)}}_{H_{r}^{1}(B_{r}^{\kappa})}^2,
        \end{aligned}
    \end{equation}
    we see
    \begin{equation}\label{eq::RiskFunctionDecomposition}
        \Exp_{\theta}(\fR_{D})=\Exp_{\theta}\left(\fR^{int}_{\Omega}(\theta)\right)+\sum_{r=1}^{l}\Exp_{\theta}\left({_{r}\fR^{bdry}_{B_{r}^{\kappa}}}(\theta)\right),
    \end{equation}
    thus
    \begin{equation}\label{eq::ApproximationError4Domaim}
        \begin{aligned}
            \Abs{\Omega}\Exp_{\theta}\left(\fR^{int}_{\Omega}(\theta)\right)&=\int_{\Omega}\Exp_{\theta}\Abs{f(x,\theta)-f(x)}^2\diff{x}\\
            &=\int_{\Omega}\operatorname{Var}_{(a_k,\vw_k)i.i.d\sim \rho}\left(\frac{1}{m}\sum_{k=1}^{m}a_k\left[\vw_k^{\T}A(x)\vw_k\sigma''(\vw_k^{\T}x)+\hat{\vb}^{\T}(x)\vw_k\sigma'(\vw_K^{\T}x)+c(x)\sigma(\vw_k^{\T}x)\right]\right)\diff{x}\\
            &=\frac{1}{m}\int_{\Omega}\operatorname{Var}_{(a,\vw)\sim \rho}\left(a\left[\vw^{\T}A(x)\vw\sigma''(\vw^{\T}x)+\hat{\vb}^{\T}(x)\vw\sigma'(\vw^{\T}x)+c(x)\sigma(\vw^{\T}x)\right]\right)\diff{x}\\
            &\le \frac{1}{m}\int_{\Omega}\Exp_{(a,\vw)\sim \rho}\left(a^2\left[\vw^{\T}A(x)\vw\sigma''(\vw^{\T}x)+\hat{\vb}^{\T}(x)\vw\sigma'(\vw^{\T}x)+c(x)\sigma(\vw^{\T}x)\right]^2\right)\diff{x}\\
            &\le \frac{1}{m}\int_{\Omega}\Exp_{(a,\vw)\sim \rho}\left(\Abs{a}^2\left(M\Abs{\vw}^3+\frac{1}{2}M\Abs{\vw}^3+\frac{1}{6}M\Abs{\vw}^3\right)^2\right)\diff{x}\\
            &\le \frac{4M\Abs{\Omega}}{m}\Exp_{(a,\vw)\sim\rho}\Abs{a}^2\Abs{\vw}^6\\
            &=\frac{4M\Abs{\Omega}}{m}\norm{(f,g)}_{\fB}^2.
        \end{aligned}
    \end{equation}
    As for $\Exp_{\theta}\left({_{r}\fR^{bdry}_{B_{r}^{\kappa}}}(\theta)\right)$, similarly we have
    \begin{equation}\label{eq::ApproximationError4Boundary}
        \begin{aligned}
            \Abs{B_{r}^{\kappa}}\Exp_{\theta}\left({_{r}\fR^{bdry}_{B_{r}^{\kappa}}}(\theta)\right)&=\int_{B_{r}^{\kappa}}\Exp_{\theta}\left(\Abs{g(x'_r,\tilde{\theta})-{_{r}g(x'_r)}}^2+\sum_{\alpha=1}^{d-1}\Abs{D_{x_{r_{\alpha}}}g(x'_r,\theta)-D_{x_{r_{\alpha}}}{_{r}g(x'_r)}}^2\right)\diff{x'_r}\\
            &\le \frac{1}{m}\int_{\Omega}\Exp_{(a,\vw)\sim \rho}\left(a^2\sigma^2( {_{r}\vw^{\T}}(x_r-\fB_{r}))\right) \diff{x'_r}\ \text{with ${_{r}\vw^{\T}}=\fO^{-1}\vw^{\T}$, $x_{r_{d}}=\gamma_r(x'_r)$}\\
            &~~~+\frac{1}{m}\sum_{\alpha=1}^{d-1}\int_{\Omega}\Exp_{(a,\vw)\sim \rho}\left(a^2\left(\left(_{r}\vw_{\alpha}+{_{r}\vw_{d}}D_{x_{r_{\alpha}}}\gamma_r(x_r')\right)\sigma'( {_{r}\vw^{\T}}(x_r-\fB_{r}))\right)^2\right) \diff{x'_r}\\
            &\le \frac{\Abs{B_{r}^{\kappa}}}{6m}\Exp_{(a,\vw)\sim\rho}\Abs{a}^2\Abs{\vw}^6+\sum_{\alpha=1}^{d-1}\frac{ (\widetilde{M}+1)\Abs{B_{r}^{\kappa}}}{2m}\Exp_{(a,\vw)\sim\rho}\Abs{a}^2\Abs{\vw}^6\\
            &\le \frac{d(\widetilde{M}+1)\Abs{B_{r}^{\kappa}}}{m}\norm{(f,g)}_{\fB}^2.
        \end{aligned}
    \end{equation}
    Thus~\eqref{eq::RiskFunctionDecomposition}, ~\eqref{eq::ApproximationError4Domaim} and~\eqref{eq::ApproximationError4Boundary} lead to
    \begin{equation}\label{eq::Expectaion4Risk}
        \Exp_{\theta}\left(\fR_{D}(\theta)\right)\le \frac{4M+l d(\widetilde{M}+1)}{m}\norm{(f,g)}_{\fB}^2.
    \end{equation}
    Moreover, we have
    \begin{equation}\label{eq::Expectaion4BarrpnTheta}
        \Exp_{\theta}\left(\norm{\theta}_{\fP}\right)=\frac{1}{m}\Exp_{(a_k,\vw_k)i.i.d\sim\rho}\sum_{k=1}^{m}\abs{a_k}\Abs{\vw_k}^3=\Exp_{(a,\vw)\sim\rho}\abs{a}\Abs{\vw}^3\le \norm{(f,g)}_{\fB}.
    \end{equation}
    By defining two events: 
    \begin{equation}
        \begin{aligned}
            H_1&:=\left\{\theta\sim\rho\ \big|\ \fR_{D}(\theta)\le \frac{12M+3l d(\widetilde{M}+1)}{m}\norm{(f,g)}_{\fB}^2\right\},\\
            H_2&:=\left\{ \theta\sim\rho\ \big|\ \norm{\theta}_{\fP}\le 2\norm{(f,g)}_{\fB}\right\},
        \end{aligned}
    \end{equation}
    then by the Markov's inequality and~\eqref{eq::Expectaion4Risk},
    \begin{equation}\label{eq::ProbEventR_D}
        \begin{aligned}
             \Prob(H_1)&=1-\Prob\left(\fR_{D}(\theta)>\frac{12M+3l d(\widetilde{M}+1)}{m}\norm{(f,g)}_{\fB}^2\right)\\
             &\ge 1-\frac{m\Exp_{\theta}\left(\fR_{D}(\theta)\right)}{(12M+3l d(\widetilde{M}+1))\norm{(f,g)}_{\fB}^2}\\
             &\ge \frac{2}{3},
        \end{aligned}
    \end{equation}
    moreover, due to~\eqref{eq::Expectaion4BarrpnTheta}
    \begin{equation}\label{eq::ProbEventBarronTheta}
        \Prob(H_2)=1-\Prob\left(\norm{\theta}_{\fP}>2\norm{(f,g)}_{\fB}\right)\ge 1-\frac{\Exp_{\theta}\left(\norm{\theta}_{\fP}\right)}{2\norm{(f,g)}_{\fB}}\ge \frac{1}{2}.
    \end{equation}
    Thus~\eqref{eq::ProbEventR_D} and~\eqref{eq::ProbEventBarronTheta} together imply that
    \begin{equation}
        \Prob(H_1\cap H_2)\ge \Prob(H_1)+\Prob(H_2)-1\ge \frac{1}{6}>0,
    \end{equation}
    which means the existence of $\tilde{\theta}$ such that
    \begin{equation*}
        \begin{aligned}
            \fR_{D}(\tilde{\theta})&\le \frac{12M+3l d(\widetilde{M}+1)}{m}\norm{(f,g)}_{\fB}^2,\\
            \norm{\tilde{\theta}}_{\fP}&\le 2\norm{(f,g)}_{\fB}.
        \end{aligned}
    \end{equation*}
\end{proof}

To obtain the following generalization result, we have to define the following class of functions for $r\in \{1,\ldots,l\}$ and $\alpha\in \{1,\ldots,d-1\}$:
\begin{equation}\label{eq::Def4ResidualFunctionSpace}
    \begin{aligned}
        \fH_{Q}&:=\left\{ \left(f(x)-f(x,\theta)\right)^2\ \big|\ \norm{\theta}_{\fP}<Q\right\},\\
        {_{r}\fJ}_{Q}&:=\left\{ \left({_{r}g(x'_r)}-g(x'_r,\theta)\right)^2\ \big|\ \norm{\theta}_{\fP}<Q\right\},\\
        D_{x_{r_{\alpha}}}{_{r}\fJ}_{Q}&:=\left\{ \left(D_{x_{r_{\alpha}}}{_{r}g(x'_r)}-D_{x_{r_{\alpha}}}g(x'_r,\theta)\right)^2\ \big|\ \norm{\theta}_{\fP}<Q\right\},
    \end{aligned}
\end{equation}
and define $\fH=\cup_{Q=1}^{\infty} \fH_{Q}$, ${_{r}\fJ}=\cup_{Q=1}^{\infty} {_{r}\fJ}_{Q}$ and $D_{x_{r_{\alpha}}}{_{r}\fJ}=\cup_{Q=1}^{\infty}D_{x_{r_{\alpha}}}{_{r}\fJ}_{Q}$, respectively.

\begin{proof}[Proof of Theorem~\ref{thm::APosterierGeneralizationBoundElliptic}]
    For $\norm{\theta}_{\fP}<Q$ and $x\in \Omega$, we have that $\fL u(x,\theta)=f(x,\theta)$ for some $f(x,\theta)\in \fF_{Q}$; for $\norm{\theta}_{\fP}<Q$ and $x\in \partial\Omega$, we have that $u(x,\theta)=g(x'_r,\theta)$ and $D_{x_{r_{\alpha}}} u(x,\theta)= D_{x_{r_{\alpha}}} g(x'_r,\theta)$ for all $\alpha\in\{1,\ldots,d-1\}$ and some $r\in \{1,\ldots, m\}$ and $g(x'_r,\theta)\in {_{r}\fG_{Q}}$.

    Note that for $f(x,\theta)\in \fF_{Q}$
    \begin{equation}\label{eq::UpperLimit4NNf}   
        \begin{aligned}
            \sup_{x\in \Omega}\Abs{f(x,\theta)}&=\sup_{x\in\Omega}\Abs{\sum_{k=1}^{m}a_k\left[\vw_k^{\T}A(x)\vw_k\sigma''(\vw_k^{\T}x)+\hat{\vb}^{\T}(x)\vw_k\sigma'(\vw_K^{\T}x)+c(x)\sigma(\vw_k^{\T}x)\right]}\\
            &\le \left(\frac{1}{6}+\frac{1}{2}+1\right)M\sum_{k=1}^{m}\Abs{a_k}\Abs{\vw_k}^3\\
            &\le 2M\norm{\theta}_{\fP}<2MQ.
        \end{aligned} 
    \end{equation}
    Similarly, for $g(x'_r,\theta)\in {_{r}\fG_{Q}}$ and $D_{x_{r_{\alpha}}} g(x'_r,\theta)\in D_{x_{r_{\alpha}}} {_{r}\fG_{Q}}$ combing the definition~\eqref{eq::Def4ClassicFunctionSpace}, we can obtain
    \begin{equation}\label{eq::UpperLimit4NNg&Derivative}
        \begin{aligned}
            \sup_{x'_r\in B_{r}^{\kappa}}g(x'_r,\theta)&\le \norm{\theta}_{\fP}<\frac{1}{6}Q,\\
            \sup_{x'_r\in B_{r}^{\kappa}}D_{x_{r_{\alpha}}} g(x'_r,\theta)&\le (\widetilde{M}+1)\norm{\theta}_{\fP}<\frac{1}{2}(\widetilde{M}+1)Q.
        \end{aligned}
    \end{equation}
    Since we assume without loss of generality that $\abs{f(x)},\Abs{{_{r}g(x'_r)}},\Abs{D_{x_{r_{\alpha}}}{_{r}g(x'_r)}} \le 1$ for all $x\in\Omega, x'_r\in B_{r}^{\kappa}, r\in \{1,\ldots,l\}, \alpha\in\{1,\ldots,d-1\}$, combining~\eqref{eq::UpperLimit4NNf} and~\eqref{eq::UpperLimit4NNg&Derivative}, we thus have for all functions in $\fH_{Q}, {_{r}\fJ}_{Q}, D_{x_{r_{\alpha}}}{_{r}\fJ}_{Q}$ respectively,
    \begin{equation}\label{eq::UpperLimit4ResidualFunctionSpace}
        \begin{aligned}
            \sup_{x\in \Omega}\left(f(x)-f(x,\theta)\right)^2&\le (1+2MQ)^2\le 9M^2Q^2,\\
            \sup_{x'_r\in B_{r}^{\kappa}}\left({_{r}g(x'_r)}-g(x'_r,\theta)\right)^2&\le (1+\frac{1}{6}Q)^2\le 4Q^2,\\
            \sup_{x'_r\in B_{r}^{\kappa}}\left(D_{x_{r_{\alpha}}}{_{r}g(x'_r)}-D_{x_{r_{\alpha}}}g(x'_r,\theta)\right)^2&\le (1+\frac{1}{2}(\widetilde{M}+1)Q)^2\le 4(\widetilde{M}+1)^2Q^2,
        \end{aligned}
    \end{equation}
    where we assume without loss of generality that $MQ>1$ and $(\widetilde{M}+1)Q>1$.

    By Lemma~\ref{lem::Contraction} , combining ~\eqref{eq::UpperLimit4ResidualFunctionSpace}, and~\eqref{eq::RademacherfF_Q},~\eqref{eq::Rademacer_rfG_Q},~\eqref{eq::RademacherD_alpha_rfG_Q} together, for any random sampled set $S$
    \begin{equation}\label{eq::RademacherComplexity4ResidualFunctionSpace}
        \begin{aligned}
            \operatorname{Rad}_{\{x_i\}_{i=1}^{n_{\Omega}}}(\fH_{Q})&\le 6MQ\operatorname{Rad}(\fF_{Q})\le\frac{24M^2Q^2d^2}{\sqrt{n_{\Omega}}},\\
            \operatorname{Rad}_{\{x_{r}^{i}\}_{i=1}^{n_r}}({_{r}\fJ}_{Q})&\le 4Q\operatorname{Rad}({_{r}\fG_{Q}})\le\frac{4Q^2}{3\sqrt{n_r}},\\
            \operatorname{Rad}{\{x_{r}^{i}\}_{i=1}^{n_r}}(D_{x_{r_{\alpha}}}{_{r}\fJ}_{Q})&\le 4(\widetilde{M}+1)Q\operatorname{Rad}(D_{x_{r_{\alpha}}} {_{r}\fG_{Q}})\le \frac{4(\widetilde{M}+1)^2Q^2}{\sqrt{n_r}}.
        \end{aligned}
    \end{equation}
    Note that
    \begin{equation}\label{eq::GeneralizationErrorDecompositionDomain&Boundary}
        \begin{aligned}
            \sup_{\norm{\theta}_{\fP}<Q}\Abs{\fR_{D}(\theta)-\fR_{S}(\theta)}&\le \sup_{\norm{\theta}_{\fP}<Q}\Abs{\fR^{int}_{\Omega}(\theta)-\fR^{int}_{\{x^i\}_{i=1}^{n_{\Omega}}}(\theta)}+\sum_{r=1}^{l}\sup_{\norm{\theta}_{\fP}<Q}\Abs{{_{r}\fR^{bdry}_{B_{r}^{\kappa}}}-{_{r}\fR^{bdry}_{\{x_{r}^{i}\}_{i=1}^{n_r}}}}\\
            &=:\operatorname{Err}_{1}+\sum_{r=1}^{l}\operatorname{Err}_{2,r}.
        \end{aligned}
    \end{equation}
    then Theorem~\ref{thm::Rade&Generalization}, Definition~\eqref{eq::Def4RiskFunction}, upper limit estimate~\eqref{eq::UpperLimit4ResidualFunctionSpace}, and Rademacher estimate~\eqref{eq::RademacherComplexity4ResidualFunctionSpace}, we see that for any $\delta>0$, with probability at least $1-\frac{3\delta}{\pi^2Q^2}$ over the choice of $\{x^i\}_{i=1}^{n_{\Omega}}$ and $1-\frac{3\delta}{\pi^2Q^2l}$ over the choice of $\{x_{r}^{i}\}_{i=1}^{n_r}$ for each $r\in \{1,\ldots,l\}$ 
    \begin{equation}\label{eq::UpperLimitGeneralizationByPart}
        \begin{aligned}
            \operatorname{Err}_{1}&\le \frac{48M^2Q^2d^2}{\sqrt{n_{\Omega}}}+36M^2Q^2\sqrt{\frac{2\ln\left(\frac{4\pi^2Q^2}{3\delta}\right)}{n_{\Omega}}},\\
            \operatorname{Err}_{2,r}&\le \frac{8(\widetilde{M}+1)^2Q^2d}{\sqrt{n_r}}+16(\widetilde{M}+1)^2Q^2d\sqrt{\frac{2\ln\left(\frac{4\pi^2Q^2l}{3\delta}\right)}{n_r}},
        \end{aligned}
    \end{equation}
    hence for any $\theta=\{(a_k,\vw_k)\}_{k=1}^{m}\in \sR^{m(d+1)}$, choose integer $Q$ such that $\norm{\theta}_{\fP}\le Q\le \norm{\theta}_{\fP}+1$, then
    \begin{equation}\label{eq::UpperLimitGeneralization}
        \begin{aligned}
            \Abs{\fR_{D}(\theta)-\fR_{S}(\theta)}&\le \left(48M^2d^2\frac{(\norm{\theta}_{\fP}+1)^2}{\sqrt{n_{\Omega}}}+\sum_{r=1}^{l}8(\widetilde{M}+1)^2d\frac{(\norm{\theta}_{\fP}+1)^2}{\sqrt{n_r}}\right)\\
            &~~~+\bigg(36M^2\left(2\ln(\pi(\norm{\theta}_{\fP}+1))+2\sqrt{\ln\left(2/(3\delta)\right)}\right)\frac{(\norm{\theta}_{\fP}+1)^2}{\sqrt{n_{\Omega}}}\\
            &~~~+\sum_{r=1}^{l}16(\widetilde{M}+1)^2d\left(2\ln(\pi(\norm{\theta}_{\fP}+1))+2\sqrt{\ln\left((2l)/(3\delta)\right)}\right)\frac{(\norm{\theta}_{\fP}+1)^2}{\sqrt{n_r}}\bigg)\\
            &\le \bigg(72M^2\left(d^2+\ln(\pi(\norm{\theta}_{\fP}+1))+\sqrt{\ln\left(1/\delta)\right)}\right)\frac{(\norm{\theta}_{\fP}+1)^2}{\sqrt{n_{\Omega}}}\\
            &~~~+\sum_{r=1}^{l}32(\widetilde{M}+1)^2d\left(1+\ln(\pi(\norm{\theta}_{\fP}+1))+\sqrt{\ln\left(l/\delta\right)}\right)\frac{(\norm{\theta}_{\fP}+1)^2}{\sqrt{n_r}}\bigg)\\
            &\le 2d\left(72M^2+32(\widetilde{M}+1)^2l\right)\left(d+\ln(\pi(\norm{\theta}_{\fP}+1))+\sqrt{\ln\left(l/\delta\right)}\right)\\
            &~~~\times\left(\frac{\norm{\theta}_{\fP}^{2}+1}{\sqrt{n_{\Omega}}}+\sum_{r=1}^{l}\frac{\norm{\theta}_{\fP}^{2}+1}{\sqrt{n_r}}\right),
        \end{aligned}
    \end{equation}
    moreover, note that $\sum_{Q=1}^{\infty}(\frac{3\delta}{\pi^2Q^2}+\sum_{r=1}^{l}\frac{3\delta}{\pi^2Q^2l})=\delta$, we see that with probability at least $1-\delta$ over the choice of sampled set $S$, equation~\eqref{eq::UpperLimitGeneralization} holds.
\end{proof}

\begin{proof}[Proof of Theorem~\ref{thm::APrioriGeneralizationBoundElliptic}]
    We have the following decomposition of $\fR_{D}(\theta_{S})$
    \begin{equation}\label{eq::Decomposition4RiskFunction}
        \begin{aligned}
            \fR_{D}(\theta_{S})&=\fR_{D}(\tilde{\theta})+\left(\fR_{D}(\theta_{S})-\fR_{S}(\theta_{S})\right)+\left(\fR_{S}(\theta_{S})-\fR_{S}(\tilde{\theta})\right)+\left(\fR_{S}(\tilde{\theta})-\fR_{D}(\tilde{\theta})\right)\\
            &=:\fR_{D}(\tilde{\theta})+G_{1}+G_{2}+G_{3}.
        \end{aligned}
    \end{equation}
    Note that, by definition, $G_{2}<0$. And according to Theorem~\ref{thm::ApproximationError}, we have
    \begin{equation}\label{eq::UpperLimit4G_0}
        \fR_{D}(\tilde{\theta})\le \frac{12M+3l d(\widetilde{M}+1)}{m}\norm{(f,g)}_{\fB}^2.
    \end{equation}
    Moreover, by Theorem~\ref{thm::APosterierGeneralizationBoundElliptic}, with probability at least $1-\frac{\delta}{2}$ over the choice of random sampled set $S$
    \begin{equation}\label{eq::UpperLimit4G1}
        \begin{aligned}
            \Abs{G_{1}}&=\fR_{D}(\theta_{S})-\fR_{S}(\theta_{S})\\
            &\le 2d\left(72M^2+32(\widetilde{M}+1)^2l\right)\left(d+\ln(\pi(\norm{\theta_{S}}_{\fP}+1))+\sqrt{\ln\left(2l/\delta\right)}\right)\left(\frac{\norm{\theta_{S}}_{\fP}^{2}+1}{\sqrt{n_{\Omega}}}+\sum_{r=1}^{l}\frac{\norm{\theta_{S}}_{\fP}^{2}+1}{\sqrt{n_r}}\right)\\
            &\le 2d\pi\left(72M^2+32(\widetilde{M}+1)^2l\right)(d+1+\sqrt{\ln(2l/\delta)}) (B+1)^3\left(\frac{1}{\sqrt{n_{\Omega}}}+\sum_{r=1}^{l}\frac{1}{\sqrt{n_r}}\right).
        \end{aligned}
    \end{equation}
    Similarly,  by Theorem~\ref{thm::APosterierGeneralizationBoundElliptic}, and Theorem~\ref{thm::ApproximationError} that $\norm{\tilde{\theta}}_{\fP}<2\norm{(f,g)}_{\fB}$ with probability at least $1-\frac{\delta}{2}$ over the choice of random sampled set $S$, we have
    \begin{equation}\label{eq::UpperLimit4G3}
        \begin{aligned}
            \Abs{G_{3}}&\le 2d\left(72M^2+32(\widetilde{M}+1)^2l\right)\left(d+\ln(\pi(\norm{\tilde{\theta}}_{\fP}+1))+\sqrt{\ln\left(2l/\delta\right)}\right)\left(\frac{\norm{\tilde{\theta}}_{\fP}^{2}+1}{\sqrt{n_{\Omega}}}+\sum_{r=1}^{l}\frac{\norm{\tilde{\theta}}_{\fP}^{2}+1}{\sqrt{n_r}}\right)\\
            &\le 2d\pi\left(72M^2+32(\widetilde{M}+1)^2l\right)(d+1+\sqrt{\ln(2l/\delta)}) (2\norm{(f,g)}_{\fB}+1)^3\left(\frac{1}{\sqrt{n_{\Omega}}}+\sum_{r=1}^{l}\frac{1}{\sqrt{n_r}}\right).
        \end{aligned}
    \end{equation}
    Thus combining~\eqref{eq::UpperLimit4G_0},~\eqref{eq::UpperLimit4G1} and~\eqref{eq::UpperLimit4G3} together lead to with probability at least $1-\delta$ over the choice of random sampled set $S$,
    \begin{equation}
        \begin{aligned}
            \fR_{D}(\theta_{S})&\le  \frac{12M+3ld(\widetilde{M}+1)}{m}\norm{(f,g)}_{\fB}^2\\
            &~~~+2d\pi\left(72M^2+32(\widetilde{M}+1)^2l\right)(d+1+\sqrt{\ln(2l/\delta)}) (2\norm{(f,g)}_{\fB}+2+B)^3\left(\frac{1}{\sqrt{n_{\Omega}}}+\sum_{r=1}^{l}\frac{1}{\sqrt{n_r}}\right).
        \end{aligned}
    \end{equation}
    Hence we complete the proof.
\end{proof}

    \bibliographystyle{plain}
    \bibliography{reference_CiCP}
\end{document}